\documentclass[11pt]{amsart}
\usepackage[pdftex]{graphicx}
\usepackage{amssymb}
\usepackage{amsmath}
\usepackage{comment}
\usepackage{amsfonts}
\usepackage{amsthm}
\usepackage{amssymb}
\usepackage{mathrsfs}
\usepackage{hyperref}
\usepackage{enumitem}
\usepackage{xcolor}
\newtheorem{theorem}{Theorem}[section]
\newtheorem{proposition}[theorem]{Proposition}
\newtheorem{corollary}[theorem]{Corollary}

\newtheorem{lemma}[theorem]{Lemma}
\newtheorem{notation}[theorem]{Notations}

\theoremstyle{definition}
\newtheorem{definition}[theorem]{Definition}
\theoremstyle{remark}
\newtheorem{remark}[theorem]{Remark}

\newtheorem{example}[theorem]{Example}

\newcommand{\Sp}{\mathbb{S}^2}
\newcommand{\Sbb}{\mathbb{S}}
\newcommand{\bd}{\partial}

\newcommand{\mC}{\mathcal C}

\newcommand{\mP}{\mathcal P}
\newcommand{\mV}{\mathcal V}
\newcommand{\la}{\langle}
\newcommand{\ra}{\rangle}

\newcommand{\eps}{\epsilon}
\newcommand{\R}{\mathbb{R}}

\newcommand{\length}{\mathrm{Length}}
\newcommand{\B}{\mathbb{B}}


\DeclareMathOperator{\area}{\mathrm{Area}}

\numberwithin{equation}{section}

\begin{document}

\title[Steklov transmission eigenvalues]{Optimisation of Steklov transmission eigenvalues and minimal surfaces}
\author[M. Karpukhin]{Mikhail Karpukhin}
\address[Mikhail Karpukhin]{Department of Mathematics, University College London, Gower Street, London WC1E 6BT, UK}
\email{\href{mailto:m.karpukhin@ucl.ac.uk}{m.karpukhin@ucl.ac.uk}}
\author[A. D. Noutchegueme]{Alain Didier Noutchegueme}
\address[Alain Didier Noutchegueme]{Département de Mathématiques et de Statistique, Université de Montréal, CP 6128 Succ Centre-Ville, Montréal, QC H3C 3J7, Canada}
\email{\href{mailto:alain.didier.noutchegueme@umontreal.ca}{alain.didier.noutchegueme@umontreal.ca}}

\date{}
\maketitle
\begin{abstract}
In the present paper, we study the variational properties of Steklov transmission eigenvalues, which can be seen as eigenvalues of the sum of two Dirichlet-to-Neumann operators on two different sides of a given curve contained in a surface. Inspired by the analogous results for Laplacian and Steklov eigenvalues, we show that critical metrics for this problem correspond to the so-called stationary configurations in the Euclidean ball, i.e. pieces of minimal surfaces inside the ball whose normals at the boundary sum up to a vector normal to the boundary sphere. They exhibit strong similarities with free boundary minimal surfaces, and for that reason we call them {\em free curve minimal surfaces}. Furthermore, we study the maximisation problem for these eigenvalues in two contexts. First, in the unconstrained setting, we show that there are no smooth maximal metrics. Second, for the first nontrivial eigenvalue, we explicitly characterize rotationally symmetric maximal metrics on the sphere as those arising from the configurations of stacked catenoids with flat caps.
\end{abstract}
\section{Introduction}

\subsection{Steklov transmission eigenvalues}

It is now a classical result of Nadirashvili~\cite{NadirashviliT2} that critical metrics for the area normalised eigenvalues of the Laplacian arise as metrics on minimal surfaces in round spheres, see also~\cite{ESIextremal, FS}. In recent years, similar correspondences have been observed for other eigenvalue problems, including Steklov eigenvalues~\cite{FS}, Steklov eigenvalues with frequency ~\cite{LM}, functions of Laplace and Steklov eigenvalues~\cite{PetComb1,PetComb2}, and various conformally covariant operators such as Dirac~\cite{KMP}, conformal Laplacian~\cite{GPA} and Paneitz operator~\cite{PA}, see also~\cite{PT} for a unifying approach to this type of problems. We refer to the introduction of~\cite{KKMS} for the up-to-date survey of the most well-studied cases of critical metrics for Laplace and Steklov eigenvalues.

In this paper, we analyze the natural analogue of the classical Steklov problem on closed Riemannian manifold. One of our goals is to show that variational theory for this new problem leads to geometrically interesting objects --- the transmission counterpart of the free-boundary framework in the classical Steklov setting.  Namely, for a closed Riemannian manifold $(M,g)$ and a smooth hypersurface $\Gamma\subset M$ we consider the following eigenvalue problem
\begin{equation}
\label{eq:St_transmission}
\left\{ \begin{array}{rcll}
\Delta_g u & = & 0 & \text{ in }M\setminus\Gamma\\
\partial_{n_1}u+\partial_{n_2}u & = & \tau u & \text{ on }\Gamma,
\end{array}\right.
\end{equation}
where $n_1, n_2$ are two unit normals to $\Gamma$ pointing in the opposite directions. We refer to the problem~\eqref{eq:St_transmission} as {\em Steklov transmission problem} and to numbers $\tau$ that admit a non-trivial solution $u\in C(M)$ to~\eqref{eq:St_transmission} as {\em Steklov transmission eigenvalues}. It is easy to prove (see Section~\ref{sec:eigenv_measure}) that these eigenvalues form a discrete non-negative sequence,
\[
0 =\tau_0(M,\Gamma,g)<\tau_1(M,\Gamma,g)\leq \tau_2(M,\Gamma,g)\leq \cdots \nearrow +\infty ,
\]
where eigenvalues are repeated according to their multiplicity. Steklov transmission eigenvalues are reminiscent of the eigenvalues of the single layer potential operator studied in~\cite{KarpShamma} in the case of a planar curve.

Our main results concern closed surfaces, so in the following we assume that $\dim M = 2$. Define the scale-invariant normalised eigenvalues,
\[
\bar\tau_k(M,\Gamma,g) = \tau_k(M,\Gamma,g)\length_g(\Gamma).
\]  
Our objective is to study the variational theory of functionals $\bar\tau_k(M,\Gamma,g)$ as functions of $g$. In particular, we are interested in the following quantities
\[
T_k(M,\Gamma,[g]):= \sup_{h\in[g]} \bar\tau_k(M,\Gamma,h),\quad T_k(M,\Gamma):=\sup_g \bar\tau_k(M,\Gamma,g),
\]
where $[g] = \{h = e^{2\omega}g,\,\,\omega\in C^\infty(M)\}$ is a fixed conformal class of metrics. Note that the quantity $T_k(M,\Gamma)$ depends only on the diffeomorphism class of the pair $(M,\Gamma)$ rather than the exact embedding of $\Gamma$.

Our first results describe connections between critical points for $\bar\tau_k(M,\Gamma,g)$ and the theory of minimal surfaces and harmonic maps. These results are to a large extent motivated by the now classical theory for the Laplacian~\cite{NadirashviliT2} and Steklov eigenvalues~\cite{FS} that historically formed a mutually beneficial connection between spectral geometry and minimal surface theory.   

\subsection{Free curve minimal surfaces} Given a Riemannian manifold $(Q,h)$ and a map $\Phi\colon (M,g)\to (Q,h)$ its energy is defined as
\[
E(\Phi)=\frac{1}{2}\int _M|d\Phi|_{g,h}^2\,dv_g
\]
 where $|d\Phi|_{g,h}^2$ is the Hilbert-Schmidt norm of $d\Phi$ associated with the metrics $g$ and $h$.

A map $\Phi$ is called {\em harmonic} if it is a critical point of the energy functional. Since we are assuming $\dim M =2$, it is easy to see that $E(\Phi)$ only depends on the conformal class of the metric $g$, hence, harmonicity of $\Phi$ depends only on $[g]$ as well.

If $M$ and $Q$ have non-empty boundary and $\Phi(\bd M)\subset \bd Q$, then $\Phi$ is called {\em free boundary harmonic map} if it is critical for the energy functional in the class of maps sending $\bd M$ to $\bd Q$. Equivalently $\Phi$ is a {\em free boundary harmonic map} if it is harmonic in the interior of $M$ and meets $\bd Q$ at the right angle. 
The theory of free boundary harmonic maps has received a lot of attention in recent years, in no small part due to the connection with the optimisation of Steklov eigenvalues, which manifests itself for $Q = \B^{n+1}\subset \R^{n+1}$, the Euclidean unit ball.

Suppose now that $Q$ still has non-empty boundary, but $M$ is closed with a distinguished fixed curve $\Gamma\subset M$ and $\Phi(\Gamma)\subset \bd Q$. By analogy, we have the following definition.

\begin{definition}
We say that $\Phi\colon (M,\Gamma,g)\to (Q,h)$ is a {\em free curve harmonic map} if $\Phi$ is a critical point of the energy functional in the class of maps satisfying $\Phi(\Gamma)\subset\bd Q$. Equivalently, $\Phi$ is a {\em free curve harmonic map} if $\Phi$ is harmonic on $M\setminus \Gamma$ and $d\Phi(n_1) + d\Phi(n_2)\perp \bd Q$, where $n_1,n_2$ are the two unit normals to $\Gamma$.
\end{definition}
Let us specify to the case $Q = \B^{n+1}\subset \R^{n+1}$, so that one can view $\Phi = (u_1,\ldots, u_{n+1})$ as a vector function. The Euler-Lagrange equation for $\Phi$ then implies that for all $i = 1,\ldots, n+1$ one has
\begin{equation}
\label{eq:FCHM}
\left\{ \begin{array}{rcll}
\Delta_g u_i & = & 0 & \text{ in }M\setminus\Gamma\\
\partial_{n_1}u_i+\partial_{n_2}u_i & = & \tau \rho_{\Phi} u_i & \text{ on }\Gamma,
\end{array}\right.
\end{equation}
where $\rho_\Phi := (\partial_{n_1}\Phi+\partial_{n_2} \Phi,\Phi)> 0$ (as follows from~\cite[Lemma 4.1]{KM}). Let $\hat\rho_\Phi$ be any smooth extension of $\rho_\Phi$ to $M$ and define $g_\Phi = \hat\rho_\Phi^2 g$. Then, $u_i$ are eigenfunctions of the problem~\eqref{eq:St_transmission} on $(M,\Gamma,g_\Phi)$ with eigenvalue $\tau = 1$. In other words, free curve harmonic maps to $\mathbb{B}^{n+1}$ can be seen as maps to the unit ball by Steklov transmission eigenfunctions with eigenvalue $\tau = 1$. 

Our first results concern characterisations of critical points of $\bar\tau_k(M,\Gamma,g)$ as functions of $g$. The rigorous definition of critical metrics for eigenvalue functionals goes back to~\cite{NadirashviliT2} with some variations appearing in~\cite{ESIextremal, FS}, see~\cite{PT} for the most comprehensive treatment. 

\begin{definition}\label{def:extremal}
Let $\mathcal{C}$ be a convex cone of metrics on $M$, and let $g_0 \in \mathcal{C}$ be a metric on $M$. We say that $g_0$ is \textbf{$\mathcal{C}$-critical} for $\bar{\tau}_k$ (or $\bar{\tau}_k$-$\mathcal{C}$-critical) if, for every smooth family of metrics $g(t) \in \mathcal{C}$ with $g(0) = g_0$, one has either 
\[
\bar{\tau}_k(M, \Gamma, g(t)) \leq \bar{\tau}_k(M, \Gamma, g_0) + o(t)
\]
or 
\[
\bar{\tau}_k(M, \Gamma, g(t)) \geq \bar{\tau}_k(M, \Gamma, g_0) + o(t)
\]
as $t \to 0$.

\end{definition}

\begin{notation} 
In this paper, $\mathcal{C}$ will be either the full set of metrics, a conformal class of metrics, or the set of metrics invariant under the action of a compact Lie group $\mathbb{G}$, which in applications is always $SO(2)\approx \mathbb{S}^1$.

\begin{enumerate}
    \item If $\mathcal{C}$ is the full set of metrics, then we say that $g_0$ is \textbf{critical} for $\bar{\tau}_k$ (or $\bar{\tau}_k$-critical).
    \item If $\mathcal{C}$ is a conformal class, then we say that $g_0$ is conformally \textbf{critical} for $\bar{\tau}_k$ (or $\bar{\tau}_k$-conformally critical). 
    \item If $\mathcal{C}$ is the family of metrics invariant under the action of a compact Lie group $\mathbb{G}$ that leaves $\Gamma$ invariant, then we say that $g_0$ is \textbf{critical} for $\bar{\tau}_k$ in the class of $\mathbb{G}$-invariant metrics (or  $\bar{\tau}_k$-critical under the action of $\mathbb{G}$).
\end{enumerate} 
\end{notation}


\color{black}

\begin{theorem}\label{thm:Crithar}
Let $(M,\Gamma)$ be fixed and suppose that $h\in [g]$ is a $\bar\tau_k$-conformally critical metric. Then there exists a free curve harmonic map $\Phi\colon (M,\Gamma,[g])\to \B^{n+1}$ and a function $\omega\in C^\infty(M)$ equal to a constant on $\Gamma$, such that components of $\Phi$ are $\tau_k$-eigenfunctions and $h = e^{2\omega}g_\Phi$. Conversely, if $\Phi\colon (M,\Gamma,[g])\to \B^{n+1}$ is a  free curve harmonic map and $\tau_{k-1}(M,\Gamma,g_\Phi)<\tau_k(M,\Gamma,g_\Phi) = 1$, then the metric $g_\Phi$ is $\bar\tau_k$-conformally critical.
\end{theorem}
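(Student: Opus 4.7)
The plan is to follow the Nadirashvili--El Soufi--Ilias--Fraser--Schoen blueprint, adapted to the transmission setting.

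\textbf{Conformal reduction.} Since $|du|_g^2\,dv_g$ is conformally invariant in dimension two, the Rayleigh characterisation
\[
\tau_k(M,\Gamma,e^{2\psi}g)=\inf_{\dim V=k+1}\sup_{u\in V\setminus 0}\frac{\int_{M\setminus\Gamma}|du|_g^2\,dv_g}{\int_\Gamma u^2 e^{\psi}\,d\ell_g}
\]
(taken over $u$ continuous on $M$ and harmonic on $M\setminus\Gamma$) shows that both $\tau_k$ and $\length(\Gamma)$ depend on the conformal factor only through the boundary density $e^\psi|_\Gamma$. To first order every smooth family in $[h]$ starting at $h$ reduces to $g(t)=e^{2t\psi}h$ for some $\psi\in C^\infty(M)$, so conformal criticality is governed by these model families.

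\textbf{First-variation formula.} Write $L=\length_h(\Gamma)$ and let $E_k$ denote the $\tau_k(h)$-eigenspace. Standard Kato-type perturbation theory applied to the transmission Rayleigh quotient yields Lipschitz dependence of $t\mapsto\bar\tau_k(g(t))$ with one-sided derivatives
\[
\frac{d^+}{dt}\bigg|_{t=0}\bar\tau_k(g(t))=\tau_k(h)\min_{\substack{u\in E_k\\ \int_\Gamma u^2 d\ell_h=1}}\bigg(\int_\Gamma\psi\,d\ell_h-L\int_\Gamma\psi u^2\,d\ell_h\bigg),
\]
and the analogous $\max$ for $d^-/dt$; in particular $d^+\leq d^-$. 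Definition~1.2 therefore forces $d^+\leq 0\leq d^-$ for every $\psi$, and by connectedness of the unit sphere of $E_k$ this is equivalent to: for every $\psi$ there is a unit $u\in E_k$ with $\int_\Gamma\psi(1-Lu^2)\,d\ell_h=0$.

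\textbf{Forward direction.} The preceding property is Hahn--Banach dual to the statement that $0$ lies in the convex hull of $\{(1-Lu^2)\,d\ell_h:u\in E_k,\ \int u^2 d\ell_h=1\}$ inside the finite-dimensional space of quadratic forms on $E_k$ restricted to $\Gamma$. Carathéodory then produces $u_1,\dots,u_{n+1}\in E_k$ and weights $c_i>0$ (WLOG) with $\sum c_i(1-Lu_i^2)=0$ on $\Gamma$, i.e.\ $L\sum c_i u_i^2\equiv 1$. The map $\Phi=(\sqrt{Lc_1}\,u_1,\dots,\sqrt{Lc_{n+1}}\,u_{n+1})$ then satisfies $|\Phi|^2\equiv 1$ on $\Gamma$; each component is harmonic on $M\setminus\Gamma$, so $|\Phi|^2$ is subharmonic there, continuous across $\Gamma$, and the maximum principle gives $\Phi(M)\subset\B^{n+1}$ with $\Phi(\Gamma)\subset\Sbb^n$. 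Componentwise the transmission equation yields $\partial_{n_1}\Phi+\partial_{n_2}\Phi=\tau_k(h)\Phi$, which is radial at $\Phi\in\Sbb^n$ and hence normal to $\bd\B^{n+1}$, so $\Phi$ is free curve harmonic. Finally $\rho_\Phi=\tau_k(h)|\Phi|^2=\tau_k(h)$ is constant on $\Gamma$, and so for a suitable extension $\hat\rho_\Phi$ one has $h=e^{2\omega}g_\Phi$ with $\omega|_\Gamma\equiv-\log\tau_k(h)$.

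\textbf{Converse direction and main obstacle.} Conversely, if $\Phi$ is free curve harmonic with $\tau_{k-1}(g_\Phi)<\tau_k(g_\Phi)=1$, its components $u_i$ lie in $E_k(g_\Phi)$ and $\sum u_i^2\equiv 1$ on $\Gamma$. Setting $L=\length_{g_\Phi}(\Gamma)$, $c_i=\int u_i^2 d\ell_{g_\Phi}/L$ (so $\sum c_i=1$), and $\tilde u_i=u_i/\sqrt{c_iL}$, the identity
\[
\sum_i c_i\bigg(\int_\Gamma\psi\,d\ell_{g_\Phi}-L\int_\Gamma\psi\tilde u_i^2\,d\ell_{g_\Phi}\bigg)=\int_\Gamma\psi\,d\ell_{g_\Phi}-\int_\Gamma\psi\sum_i u_i^2\,d\ell_{g_\Phi}=0
\]
places $0$ in the convex hull of the values of the first-variation functional, hence by connectedness in its range, so $d^+\leq 0\leq d^-$ and $g_\Phi$ is conformally critical (the assumption $\tau_{k-1}<\tau_k$ ensures $E_k$ is correctly identified). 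The main obstacle is the rigorous derivation of the first-variation formula at a possibly multiple eigenvalue: verifying Kato-type perturbation theory for the transmission Rayleigh quotient, establishing one-sided differentiability of $\bar\tau_k$, and pinning down the explicit extremal characterisation over $E_k$. The Hahn--Banach step, the maximum-principle argument, and the algebraic identity in the converse are standard once this is in place.
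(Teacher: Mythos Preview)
Your approach is essentially the same as the paper's: both establish a first-variation formula for the conformal deformation $e^{2t\psi}h$, then use a Hahn--Banach/convex-hull argument to show that the constant $1$ lies in $\operatorname{conv}\{u^2|_\Gamma : u\in E_k\}$, and conclude via the maximum principle and the transmission equation; the converse is likewise handled by the identity $\sum u_i^2\equiv 1$ on $\Gamma$. The only substantive difference is in how the derivative step is executed: rather than invoking a Kato one-sided derivative formula (which, as you note, is the main technical point, and whose $\min$/$\max$ expression is literally the formula for $\tau_l$ at the bottom of the cluster rather than $\tau_k$), the paper proves Lipschitz continuity of $t\mapsto\tau_l(t)$ and uses a.e.\ differentiability plus a compactness/interpolation argument (Lemmas~4.1--4.2) to produce the needed $u\in E_k$ with vanishing pairing.
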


We now turn to minimal surfaces. By analogy with free boundary minimal surfaces we introduce the following definition.

\begin{definition}
We say that $\Phi\colon (M,\Gamma,g)\to (Q,h)$ is a {\em free curve minimal immersion} if $\Phi$ 
is a conformal free curve harmonic map. Equivalently, $\Phi(M\setminus \Gamma)$ is a possibly disconnected minimal surface in $Q$, $\Phi(\Gamma)\subset \bd Q$ and the two normals of $\Phi(M\setminus \Gamma)$ to $\Phi(\Gamma)$ sum up to form a vector parallel to $\bd Q$.
We say that $\Phi(M)$ is a free curve minimal surface. 
\end{definition}

\begin{remark}
According to the first variation formula for the area functional, see e.g. \cite{Dierkes_Hildebrant_Sauvigny}, the image $\Phi(M)$ is a stationary point of area under variations of $Q$ that preserve $\bd Q$. In the language on geometric measure theory, $\Phi(M)$ is a stationary integral varifold in $Q$, see e.g.~\cite{CM}.
\end{remark}

Let now $Q = \B^{n+1}\subset \R^{n+1}$, and $\Phi$ be a free curve minimal immersion into $\B^{n+1}$. Then the pullback metric $\Phi^*g_{\R^{n+1}}$ is conformal to $g$ and, hence, the components of $\Phi$ are solution to the equation~\eqref{eq:FCHM} with $g$ replaced by $\Phi^*g$. Furthermore, the density $\rho_\Phi$ can be computed in terms of the angle function $\alpha$ between the normals to $\Phi(M)$ and to the sphere, $(\bd_{n_i} \Phi,\Phi) = \cos \alpha$, so that $\rho_\Phi = 2\cos\alpha$. As a result, $g_\Phi|_{\Gamma}\not = \Phi^*g|_\Gamma$, in contrast with the classical cases of Laplace and Steklov eigenvalues.

\begin{theorem}
\label{thm:CritMin}
Let $(M,\Gamma)$ be fixed and suppose that $g$ is a $\bar\tau_k$-critical metric. Then there exists a free curve minimal immersion $\Phi\colon (M,\Gamma,g)\to \B^{n+1}$ and a function $\omega\in C^\infty(M)$ equal to a constant on $\Gamma$, such that components of $\Phi$ are $\tau_k$-eigenfunctions and $g= e^{2\omega}g_\Phi$. Conversely, if $\Phi\colon (M,\Gamma,g)\to \B^{n+1}$ is a  free curve minimal immersion and $\tau_{k-1}(M,\Gamma,g_\Phi)<\tau_k(M,\Gamma,g_\Phi) = 1$, then the metric $g_\Phi$ is $\bar\tau_k$-critical.
\end{theorem}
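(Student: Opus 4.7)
The plan is to deduce Theorem~\ref{thm:CritMin} from Theorem~\ref{thm:Crithar} by upgrading conformal criticality to full criticality through consideration of transverse (trace-free) metric variations.

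For the forward direction, assume $g$ is $\bar\tau_k$-critical. In particular $g$ is $\bar\tau_k$-conformally critical, so Theorem~\ref{thm:Crithar} supplies a free curve harmonic map $\Phi\colon (M,\Gamma,[g])\to \B^{n+1}$ whose components are $\tau_k$-eigenfunctions, together with $\omega\in C^\infty(M)$ constant on $\Gamma$ such that $g = e^{2\omega}g_\Phi$. It remains to show that $\Phi$ is a conformal (hence free curve minimal) immersion. To that end, I would consider smooth families $g(t)=g+t\sigma$ with $\sigma$ a symmetric trace-free 2-tensor, which provide variations transverse to $[g]$. The central computation is a Hadamard-type first variation formula for $\bar\tau_k$ along $g(t)$ at $t=0$, obtained via the directional-derivative theory for min-max eigenvalues (needed because of possible multiplicity). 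Using harmonicity of the components of $\Phi$ together with the fact that $\omega|_\Gamma$ is constant (so the rescaling of $\length_g(\Gamma)$ is inert at leading order), the boundary contributions from both sides of $\Gamma$ cancel and the derivative reduces to a pairing of $\sigma$ against the stress-energy tensor $T_\Phi := \Phi^*g_{\R^{n+1}} - \tfrac12|d\Phi|^2 g$ of $\Phi$.

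Definition~\ref{def:extremal} forces the derivative to have constant sign for both $\sigma$ and $-\sigma$, which (since these signs are opposite) means the derivative must actually vanish for every trace-free $\sigma$. Thus $T_\Phi\equiv 0$ on $M\setminus\Gamma$, which is precisely weak conformality of $\Phi$. Combined with harmonicity this gives that $\Phi$ is a branched conformal harmonic map, i.e.\ a free curve minimal immersion in the sense of the definition above.

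For the converse, let $\Phi$ be a free curve minimal immersion with $\tau_{k-1}(M,\Gamma,g_\Phi) < \tau_k(M,\Gamma,g_\Phi) = 1$. The converse part of Theorem~\ref{thm:Crithar} immediately gives that $g_\Phi$ is $\bar\tau_k$-conformally critical. For an arbitrary smooth family $g(t)\in\Met(M)$ with $g(0)=g_\Phi$, decompose $\dot g(0) = f g_\Phi + \sigma$ into its conformal and trace-free parts. The first variation formula splits accordingly: the conformal piece $fg_\Phi$ contributes with the sign controlled by conformal criticality, and the trace-free piece contributes $\int_M \langle \sigma,T_\Phi\rangle_{g_\Phi}\,dv_{g_\Phi} = 0$ thanks to minimality of $\Phi$. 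Therefore the combined derivative inherits the sign condition from the conformal direction, verifying Definition~\ref{def:extremal} for $g_\Phi$ in the full cone of metrics.

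The main obstacle is the careful derivation of the first variation formula for $\bar\tau_k$ under a general (non-conformal) metric perturbation in the transmission setting. One has to simultaneously track the variation of $\length_g(\Gamma)$ in the normalisation, the variation of the Dirichlet-to-Neumann operators on the two sides of $\Gamma$, and the volume form, and one has to handle the directional differentiability of $\tau_k$ in the presence of multiplicity as in~\cite{ESIextremal, PT}. This is the transmission analogue of the computations performed for Steklov eigenvalues in~\cite{FS} and for Laplace eigenvalues in~\cite{NadirashviliT2}; the expectation is that the boundary terms assemble into the transmission condition $\partial_{n_1}\Phi + \partial_{n_2}\Phi \perp \bd\B^{n+1}$, so that harmonicity of $\Phi$ together with the constancy of $\omega$ on $\Gamma$ causes them to vanish and leaves only the stress-energy pairing above.
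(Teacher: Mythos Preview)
Your approach has a genuine gap in the forward direction. You propose to first apply Theorem~\ref{thm:Crithar} to obtain some free curve harmonic map $\Phi$, and then use trace-free variations to show that this particular $\Phi$ is conformal. But the first-variation formula (Lemma~\ref{lem:Derivative}) gives
\[
\dot\tau_k(0) = Q^0_\sigma(u) = -\int_M \langle \tau(u),\sigma\rangle\,dv_g - \tfrac{\tau_k}{2}\int_\Gamma u^2\,\sigma(T,T)\,ds_g
\]
for a \emph{single} normalised eigenfunction $u\in E_k(g)$, not for the sum $T_\Phi = \sum_i\tau(u_i)$. Criticality (via Lemma~\ref{lem:crit_eigenf}) then yields, for each admissible variation, \emph{some} $u=u_\sigma$ with $Q^0_\sigma(u_\sigma)=0$; but $u_\sigma$ depends on $\sigma$ and need not be a component of the $\Phi$ supplied by Theorem~\ref{thm:Crithar}. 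So there is no route from criticality along trace-free directions to $T_\Phi=0$ for that $\Phi$ (and $\Phi$ is not unique, so one cannot assume it is the ``right'' one). Note also that trace-freeness of $\sigma$ only gives $\sigma(T,T)+\sigma(n,n)=0$, not $\sigma(T,T)=0$, so the boundary term does not drop out as you claim.

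The paper avoids this by running a single Hahn--Banach separation in the product space $\mathcal{H}=L^2(S^2(M\setminus\Gamma))\times L^2(\Gamma)$: one shows that $\bigl(0,\tfrac{\tau_k}{2}\bigr)$ lies in the convex hull of $\bigl\{(\tau(u),\tfrac{\tau_k}{2}u^2|_\Gamma):u\in E_k(g)\bigr\}$, producing eigenfunctions $u_1,\dots,u_{n+1}$ that satisfy \emph{simultaneously} $\sum_i\tau(u_i)=0$ on $M\setminus\Gamma$ and $\sum_iu_i^2=1$ on $\Gamma$. Working in the product space is precisely what couples these two conditions to the same collection of eigenfunctions; splitting into conformal plus trace-free directions decouples them. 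Your converse argument is essentially correct but unnecessarily indirect: since a free curve minimal immersion satisfies both identities, one has $\sum_iQ^0_h(u_i)=0$ for \emph{every} symmetric $h$, which immediately gives eigenfunctions with $Q^0_h\le 0$ and $Q^0_h\ge 0$ and hence criticality, without first passing through Theorem~\ref{thm:Crithar}.
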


\subsection{Upper bounds for \texorpdfstring{$\bar\tau _k$}{tauk}} We explain in Section \ref{sec:eigenv_measure} that $\tau_k(M,\Gamma, g)$ can be seen as eigenvalues associated with a measure supported on $\Gamma$. Thus, it follows from~\cite{GKL, KS} that for any $h\in[g]$
\begin{equation}
\label{ineq:TLbound}
\bar\tau_k(M,\Gamma,h)\leq \Lambda_k(M,[g]),\qquad \bar\tau_k(M,\Gamma,g)\leq \Lambda_k(M),
\end{equation}
where $\Lambda_k$ stands for the suprema of normalised Laplacian eigenvalues $\Delta_g u = \lambda u$ on $(M,g)$, i.e
\[
\Lambda_k(M,[g]) = \sup_{h\in[g]}\lambda_k(M,h)\area(M,h), 
\]
\[
\Lambda_k(M) = \sup_g\lambda_k(M,g)\area(M,g).
\]
Using ideas of homogenisation, \cite{GL,GHL, GKL} we show that bounds~\eqref{ineq:TLbound} can be saturated.
\begin{theorem}
\label{thm:trans_homogenisation}
Let $M$ be a closed surface and $\Gamma\subset M$ be a smooth curve.
\begin{enumerate}
\item For any conformal class $[g]$ on $M$ and for any $k>0$ there exists a sequence of metrics $g_i\in[g]$ and a sequence of curves $\Gamma_i\subset M$ such that $(M,\Gamma_i)$ is in the same homotopy class as $(M,\Gamma)$ and
\[
\lim_{i\to\infty} \bar\tau_k(M,\Gamma_i,g_i) = \Lambda_k(M,[g]).
\]
\item For any $k>0,$ there exists a sequence $g_i$ of metrics such that
\[
\lim_{i\to\infty}\bar\tau_k(M,\Gamma,g_i) = \Lambda_k(M).
\]
\end{enumerate}
\end{theorem}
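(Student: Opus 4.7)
The plan is to realise $\bar\tau_k(M,\Gamma,g)$ as an eigenvalue associated with the Radon measure $d\mu_g := d\ell_g|_\Gamma$ on $M$, in the framework of~\cite{KS,GKL}. In that picture,
\[
\bar\tau_k(M,\Gamma,g)\;=\;\length_g(\Gamma)\cdot\inf_{V_{k+1}}\sup_{u\in V_{k+1}\setminus\{0\}}\frac{\int_M|\nabla u|_g^2\,dv_g}{\int_\Gamma u^2\,d\ell_g},
\]
the infimum running over $(k+1)$-dimensional subspaces of $H^1(M)$, and the inequalities~\eqref{ineq:TLbound} reduce to the general measure-eigenvalue comparison between $d\ell_g|_\Gamma$ and $dv_g$. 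The problem thus becomes: construct sequences of measures, supported on curves in the right homotopy class, whose normalisations converge weakly-$*$ to the area form of a near-maximiser of $\Lambda_k$.

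For part (1), fix $\delta>0$ and choose $h\in[g]$ with $\lambda_k(M,h)\area(M,h)>\Lambda_k(M,[g])-\delta$. I would take an $\epsilon_i$-net $\{p_j\}\subset M$ with $\epsilon_i\to 0$ and attach to $\Gamma$ a finite system of thin hairs passing through each $p_j$ and returning to $\Gamma$; since each hair is contractible, the resulting curve $\Gamma_i$ stays in the homotopy class of $\Gamma$. Next, I would choose a conformal factor $e^{2\omega_i}$ supported in thin tubular neighbourhoods of $\Gamma_i$, designed so that $g_i:=e^{2\omega_i}h\in[g]$ and the normalised measures $d\ell_{g_i}|_{\Gamma_i}/\length_{g_i}(\Gamma_i)$ converge weakly-$*$ to $dv_h/\area(M,h)$ on $M$. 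The main analytic claim is that under this measure convergence
\[
\bar\tau_k(M,\Gamma_i,g_i)\;\longrightarrow\;\lambda_k(M,h)\cdot\area(M,h),
\]
which, combined with the arbitrariness of $\delta$, gives (1) by a diagonal extraction. The $\limsup$ half is obtained by plugging $\lambda_k$-eigenfunctions of $\Delta_h$ into the Rayleigh quotient and invoking weak convergence of measures; the $\liminf$ half, in the spirit of~\cite{GL,GKL}, demands a compactness argument for $L^2(\Gamma_i,d\ell_{g_i})$-normalised transmission eigenfunctions, extracting an $H^1$-weak limit and identifying it with a $\lambda_k$-eigenfunction of $(M,h)$.

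Part (2) follows by an outer diagonal. Pick $h_n$ with $\lambda_k(M,h_n)\area(M,h_n)\to\Lambda_k(M)$, and for each $n$ run the construction of part (1) inside $[h_n]$ to produce a curve $\Gamma^{(n)}$ homotopic to $\Gamma$ and a metric $\tilde g_n$ satisfying $\bar\tau_k(M,\Gamma^{(n)},\tilde g_n)>\lambda_k(M,h_n)\area(M,h_n)-1/n$. Because any two smooth curves in the same homotopy class on a closed surface are related by an ambient diffeomorphism, pulling $\tilde g_n$ back by such a diffeomorphism yields a metric $g_n$ on $M$ with $\bar\tau_k(M,\Gamma,g_n)=\bar\tau_k(M,\Gamma^{(n)},\tilde g_n)$, and letting $n\to\infty$ completes the argument.

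The principal obstacle is the continuity assertion $d\mu_{g_i}\rightharpoonup dv_h\;\Longrightarrow\;\bar\tau_k(M,\Gamma_i,g_i)\to\lambda_k(M,h)\area(M,h)$. The upper bound half is essentially routine from weak-$*$ convergence and regularity of fixed test functions, but the lower bound hinges on a uniform $H^1$-compactness for transmission eigenfunctions, which in turn requires a \emph{local} homogenisation estimate near each individual hair of $\Gamma_i$. Here the argument must depart from~\cite{GL,GHL,GKL}: those works treat small Steklov holes on the boundary, whereas the hair is an interior arc carrying the transmission boundary condition. The local model is a thin planar strip with the transmission condition along a segment, and one expects it to reduce to the Steklov case by doubling the ambient domain across the arc, but carrying this through rigorously, and propagating the resulting local estimate into a global spectral-convergence statement, is the heart of the matter.
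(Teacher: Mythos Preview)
Your outline is in the right spirit, and the obstacle you flag at the end is genuine for the direct route you sketch. The paper, however, sidesteps it entirely by a two-step argument.

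First, rather than approximate $dv_h$ by curve measures, the paper imports the limit from the Steklov side. Girouard--Lagac\'e already produced domains $\Omega_i\subset M$ with $\bar\sigma_k(\Omega_i,g)\to\Lambda_k(M,[g])$, where $\partial\Omega_i$ is a union of small disjoint circles. The trivial inequality $\sigma_k(\Omega,g)\le\tau_k(M,\partial\Omega,g)$ (same Rayleigh quotient, larger domain of integration in the numerator) together with the upper bound~\eqref{ineq:TLbound} gives $\bar\tau_k(M,\partial\Omega_i,g)\to\Lambda_k(M,[g])$ at once --- the hard liminf you worry about is inherited from the Steklov homogenisation, and no transmission-specific local analysis near the curve is needed. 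So the concern that ``the argument must depart from~\cite{GL,GHL,GKL}'' dissolves: one does not redo homogenisation, one quotes it.

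Second, to reach the prescribed isotopy class, the paper approximates the \emph{fixed} length measure $ds_g^{\partial\Omega_i}$ (not $dv_h$) by weighted length measures $\rho_n\,ds_g^{\Gamma_n'}$ with $\Gamma_n'$ isotopic to $\Gamma$. The construction is a surgery: join the circles pairwise by thin tubes carrying density tending to $0$, collapsing to a single contractible curve, then tack on a copy of $\Gamma$ with vanishingly small density. Two short lemmas (controlling $\eps\,ds_g^\gamma$ and $ds_g|_{\text{short arc}}$ in the dual Sobolev norm) show this converges in $(W^{1,p}(M))^*$ for every $p>1$ --- strictly stronger than weak-$*$ --- and the eigenvalue-continuity theorem from~\cite{GKL} then gives convergence of $\bar\tau_k$ in both directions automatically. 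Your ``$H^1$-compactness of transmission eigenfunctions near each hair'' never enters.

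Your part~(2) argument, via a diagonal extraction and an ambient diffeomorphism carrying $\Gamma$ to $\Gamma^{(n)}$, matches the paper's route.
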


Furthermore, for $k=1,2$ it was shown in~\cite{KS} that the equality in~\eqref{ineq:TLbound} cannot be achieved. Together with the previous theorem this implies that for a fixed $(M,\Gamma)$ there are no smooth $\bar\tau_k$-maximal metrics. At the same time, it appears likely that the existence theory for $\bar\tau_k$-conformally maximal metric on $(M,\Gamma)$ can be developed along the same lines as in~\cite{KS, Petrides, KNPP} and one can expect the existence of $\bar\tau_1$-conformally maximal metrics for many triples $(M,\Gamma,[g])$.
The absence of $\bar\tau_1$-maximal metrics makes it difficult to construct $\bar\tau_1$-critical points and raises a question whether it is possible to find any non-trivial applications of Theorem~\ref{thm:CritMin}. Our final result gives one such application by considering a particular case of rotationally symmetric metrics on the sphere.

\subsection{Rotational symmetry} More specifically, we set $M=\Sp$, and let  $\Gamma^{(N)}$ be a collection of $N$ parallels, $N\geq 1$, see Figure \ref{Paralells}. We then define
\begin{equation}
\label{def:T1N}
T_1(N):= T^{\Sbb^1}_1(\Sp,\Gamma^{(N)}) := \sup_{g\text{ is $\Sbb^1$-sym.}}\bar\tau_1(\Sp,\Gamma^{(N)}
,g),
\end{equation}
where supremum is over all rotationally symmetric metrics on $\Sp$. As it was observed in~\cite{KKMS}, even though the space of invariant metrics is a proper subspace of the space of all metrics, one still expects that critical points of the restricted functional are global critical points. In Section \ref{sec:Crit_conditions} we prove the following theorem.

\begin{theorem}
    
\label{thm:Crit_S1} 
Suppose that $g_N$ is a  $\bar\tau_1$-critical metric for the problem~\eqref{def:T1N}. Then there exists a rotationally symmetric free curve minimal immersion $\Phi\colon (\Sp,\Gamma^{(N)},g_N)\to \B^{3}$ and a $\Sbb^1-$invariant function $\omega\in C^\infty(\Sp),$ such that components of $\Phi$ are $\tau_1$-eigenfunctions and $g = e^{2\omega}g_\Phi$.

\end{theorem}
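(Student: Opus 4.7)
The plan is to upgrade $\Sbb^1$-criticality to unconstrained criticality so that Theorem~\ref{thm:CritMin} applies directly, and then to use the $\Sbb^1$-action on the $\tau_1$-eigenspace to force the resulting immersion into $\B^3$ and render it rotationally equivariant. First I would run the first-order variational analysis behind Theorem~\ref{thm:CritMin} for $\Sbb^1$-symmetric conformal perturbations $g(t) = e^{2t\omega} g_N$ with $\omega \in C^\infty(\Sp)^{\Sbb^1}$. A Hahn--Banach/min-max reduction as in~\cite{NadirashviliT2, FS} adapted to the transmission setting should yield a probability measure $d\mu$ on the unit sphere of the $\tau_1$-eigenspace $E_{\tau_1}$ and a constant $c>0$ such that
\[
\int_\Gamma \omega \left(\int_{E_{\tau_1}} |u|^2 \, d\mu(u) - c\right) ds_{g_N} \;=\; 0 \qquad \text{for all } \omega \in C^\infty(\Sp)^{\Sbb^1}.
\]
Since $g_N$ is $\Sbb^1$-invariant, $\Sbb^1$ acts orthogonally on $E_{\tau_1}$, so averaging $d\mu$ over this action yields an $\Sbb^1$-invariant measure still satisfying the identity above.

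Next, I would remove the $\Sbb^1$-restriction on $\omega$. With $d\mu$ chosen $\Sbb^1$-invariant, the density $\rho := \int |u|^2 \, d\mu(u) - c$ on $\Gamma$ is itself $\Sbb^1$-invariant, so testing the identity against $\omega = \rho$ forces $\rho \equiv 0$. For any $\omega \in C^\infty(\Sp)$ we then have $\int_\Gamma \omega \rho\, ds_{g_N} = \int_\Gamma \overline\omega\, \rho\, ds_{g_N} = 0$, where $\overline\omega$ is the $\Sbb^1$-average of $\omega$. An analogous averaging of the arbitrary (non-conformal) variations handles the conformal-structure direction, giving the full critical-point condition. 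Theorem~\ref{thm:CritMin} then produces a free curve minimal immersion $\Phi \colon (\Sp, \Gamma^{(N)}, g_N) \to \B^{n+1}$ with $g_N = e^{2\omega_0} g_\Phi$ for some $\omega_0 \in C^\infty(\Sp)$ constant on $\Gamma^{(N)}$, whose components are $\tau_1$-eigenfunctions.

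Third, I would descend from $\B^{n+1}$ to $\B^3$. Decompose $E_{\tau_1} = V_0 \oplus \bigoplus_{m \geq 1} V_m$ into $\Sbb^1$-isotypic components; since $d\mu$ is $\Sbb^1$-invariant, Fourier analysis on each circle of $\Gamma$ shows the critical identity splits into independent constraints on each frequency. One may therefore take $d\mu$ supported on a minimal $\Sbb^1$-invariant set consisting of one vector of $V_0$ together with a single two-dimensional $\Sbb^1$-orbit in some $V_m$. In adapted coordinates $(\theta,\varphi)$ on $\Sp$ the corresponding map takes the form
\[
\Phi \;=\; \bigl(f_0(\theta),\; f_m(\theta)\cos m\varphi,\; f_m(\theta) \sin m\varphi\bigr),
\]
which is manifestly $\Sbb^1$-equivariant and lies in $\B^3$ after the usual rescaling normalising $|\Phi|^2\equiv 1$ on $\Gamma$ (the containment in the closed ball follows from harmonicity via the maximum principle). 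The conformal factor $\omega_0$ is automatically $\Sbb^1$-invariant because both $g_N$ and $g_\Phi$ are.

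The main obstacle is this dimensional reduction. Justifying that a single pair $(V_0, V_m)$ suffices requires showing that the critical identity at the $N$ parallels genuinely decouples across isotypic components, so that each frequency can be solved independently, and that the nodal structure imposed by $\tau_1$ being the first nontrivial Steklov transmission eigenvalue singles out a unique admissible frequency. A careful Fourier decomposition on each circle of $\Gamma^{(N)}$, together with a standard multiplicity-reduction argument for the weighted system $\alpha_0 v_0^2(\theta_i) + \tfrac{\alpha_m}{2} v_m^2(\theta_i) = c$ ($i = 1,\ldots, N$) inside $V_0 \oplus V_m$, should carry both points through.
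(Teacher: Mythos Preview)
Your Steps 1--2 are essentially the paper's approach: Lemma~\ref{lem:crit_eigenf} and Theorem~\ref{thm:Crit_S1_2} also run Hahn--Banach in $\mathcal{H} = L^2(S^2(M\setminus\Gamma)) \times L^2(\Gamma)$ and average the separating functional over $\Sbb^1$ to reduce to invariant test data. Rather than ``upgrading'' to unconstrained criticality and invoking Theorem~\ref{thm:CritMin}, the paper directly shows $(0,\tfrac{\tau_1}{2}) \in \operatorname{conv}\{(\tau(u),\tfrac{\tau_1}{2} u^2|_\Gamma)\}$, obtaining an equivariant free curve minimal immersion $\Phi\colon(\Sp,\Gamma^{(N)})\to\B^{n+1}$; equivariance is arranged by replacing $\Phi$ with $\hat\Phi(x)=[\kappa\mapsto\Phi(\kappa\cdot x)]\in L^2(\Sbb^1,\R^{n+1})$, which lands in a finite-dimensional invariant subspace. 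The net effect is the same as yours.

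Where your proposal diverges, and where there is a real gap, is Step~3. Your Fourier-decoupling approach to forcing $n+1=3$ is neither justified nor the paper's route. Even granting that $\int|u|^2\,d\mu$ is $\theta$-independent on each parallel, the boundary identity becomes a system of $N$ scalar constraints mixing all isotypic contributions $\sum_m c_m f_m(z_i)^2=c$; there is no reason a single pair $(V_0,V_m)$ should solve it, and you say nothing about preserving the \emph{interior} conformality identity $\sum\tau(u_i)=0$ under reduction of $d\mu$. The paper instead proves a multiplicity bound: Courant's nodal domain theorem (Lemma~\ref{Courant_Nodal_Domain}) together with the $\Sbb^1$-decomposition of $E_{\tau_1}$ show (Lemma~\ref{Lem_multiplicity}) that for $j\ge2$ the function $a(z)\sin(j\theta)$ has at least four nodal domains and hence cannot lie in $E_{\tau_1}$, and that $V_0$ and $V_1$ each contribute at most one irreducible. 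Thus $\dim E_{\tau_1}\le3$ \emph{a priori}, so $n\le2$ for free. The cases $n=0,1$ are then excluded by conformality (Theorem~\ref{thm:Crit_S1'}): if $n=0$ then $d\Phi(\partial_\theta)\equiv0$; if $n=1$ with $\Phi=(a(z)\cos\theta,a(z)\sin\theta)$ then on an interior annulus $|a(z_1)|=|a(z_2)|=1$ forces $a'(z_0)=0$ somewhere, again violating conformality. Your ``nodal structure'' hint is the right ingredient, but it should be used to bound $\dim E_{\tau_1}$ directly rather than to trim $d\mu$ after the fact.
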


As a result, rotationally symmetric critical metrics correspond to free curve minimal surfaces of revolution in $\B^3$. For a given number of $N$ parallels such surfaces then necessarily look like a stack on $(N-1)$ catenoids with flat disk caps on the top and on the bottom. They were studied in~\cite{KZ}, where the authors prove that for each $N\geq1,$ there is a unique (up to a rotation) such free curve minimal surface $\mC(N)$ of revolution. It is then natural to suggest that $\mC(N)$ corresponds to the metric achieving $T_1(N)$. Indeed, we prove the following.

\begin{theorem}
\label{thm:rot_symm}
For each $N\geq1$ there exists a rotationally symmetric metric $g_N$ on $(\Sp,\Gamma^{(N)})$ such that $\bar \tau_1(\Sp,\Gamma^{(N)},g_N) = T_1(N)$. For any other such metric $h$ there exists a function $\omega\in C^\infty(\Sp)$, equal to a constant on $\Gamma^{(N)}$, such that $h = e^{2\omega}g_N$. Furthermore, the quantities $T_1(N)$ possess the following properties:
\begin{enumerate}
\item $T_1(N) = 2\area (\mC(N))$, where $\mC(N)$ is the unique free curve minimal surface of revolution in $\B^3$ whose intersection with $\bd \B^3 = \Sp$ has $N$ connected components;
\item  $\tau_1(\Sp,\Gamma^{(N)},g_N)$ has multiplicity $3$;
\item $\mC(1)$ is a double equatorial disk, and $T_1(1) = 4\pi$;
\item $\mC(2)$ is a "critical drum", see Figure~\ref{Crit_Drum},  and $T_1(2)\approx 4\pi\times 1.56$;
\item $T_1(N)<T_1(N+1)$;
\item $T_1(N)\to 8\pi$ as $N\to\infty$.
\end{enumerate}
\end{theorem}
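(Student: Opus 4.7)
The argument combines Theorem~\ref{thm:Crit_S1} (critical-metric characterisation in the $\mathbb{S}^1$-symmetric class), the classification of rotationally symmetric free curve minimal surfaces in $\B^3$ from~\cite{KZ}, and variational analysis of the one-dimensional problem obtained by restricting to $\mathbb{S}^1$-invariant conformal factors. Every $\mathbb{S}^1$-invariant metric on $\Sp$ is conformal to the round metric via a factor $e^{2\omega(\theta)}$ with $\theta$ the latitude, so~\eqref{def:T1N} is a supremum over 1D conformal factors; Fourier decomposition in the longitudinal variable splits the Steklov transmission spectrum into a family of 1D problems indexed by the Fourier mode $m\geq 0$.

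\textbf{Existence of $g_N$.} From~\eqref{ineq:TLbound} and Hersch's inequality $\Lambda_1(\Sp)=8\pi$ one has $T_1(N)\leq 8\pi$. Adapting the strict-inequality argument of~\cite{KS} to the symmetric transmission setting, I expect to obtain $T_1(N)<8\pi$ for every finite $N$. Granting this gap, a maximising sequence $\omega_i$ cannot undergo the homogenisation of Theorem~\ref{thm:trans_homogenisation}, and 1D compactness combined with elliptic transmission regularity produces a smooth $\mathbb{S}^1$-invariant maximizer $g_N$. Theorem~\ref{thm:Crit_S1} applied to $g_N$ then yields a rotationally symmetric free curve minimal immersion $\Phi\colon(\Sp,\Gamma^{(N)},g_N)\to\B^3$, and the uniqueness theorem of~\cite{KZ} forces $\Phi(\Sp)=\mC(N)$ and uniqueness of $g_N$ up to a conformal factor constant on $\Gamma^{(N)}$.

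\textbf{Computing the properties.} Integrating $\Delta\Phi=0$ against $\Phi$ on $\Sp\setminus\Gamma^{(N)}$ with the transmission condition $\partial_{n_1}\Phi+\partial_{n_2}\Phi=\rho_\Phi\Phi$ and $|\Phi|\equiv 1$ on $\Gamma^{(N)}$ gives
\[
\int_{\Sp}|d\Phi|^2\, dv_{g_\Phi} \;=\; \int_{\Gamma^{(N)}}\rho_\Phi\, ds \;=\; \length_{g_\Phi}(\Gamma^{(N)}).
\]
Conformality of $\Phi$ identifies the left side with $2\area(\mC(N))$, while the right side equals $\bar\tau_1(\Sp,\Gamma^{(N)},g_\Phi)=T_1(N)$, proving~(1). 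For~(2) the three components of $\Phi$ give three independent $\tau_1$-eigenfunctions (the axial coordinate lives in Fourier mode $0$, the two horizontal coordinates in mode $1$); the 1D analysis of Step~1 rules out higher-mode contributions at the first nontrivial level and pins the multiplicity at $3$. For~(3) the only rotationally symmetric free curve minimal surface in $\B^3$ with a single parallel boundary is the twice-covered equatorial disk of area $2\pi$, so $T_1(1)=4\pi$; (4) follows by evaluating~(1) on the explicit $N=2$ ``critical drum'' of~\cite{KZ}. For~(5), inserting a thin catenoidal neck into $\mC(N)$ produces an $\mC(N+1)$ configuration with strictly greater area, so $T_1(N+1)=2\area(\mC(N+1))>2\area(\mC(N))=T_1(N)$. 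Finally,~(6) follows from the varifold convergence $\mC(N)\to 2\cdot\Sp$ proved in~\cite{KZ}, whence $T_1(N)\to 8\pi$.

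\textbf{Main obstacle.} The principal difficulty is the existence step: establishing a quantitative strict gap $T_1(N)<8\pi$ in the $\mathbb{S}^1$-symmetric class in order to rule out bubbling along maximising sequences. The non-existence result of~\cite{KS} in the unrestricted setting is driven by homogenisation at isolated points, a mechanism that is \emph{a priori} compatible with $\mathbb{S}^1$-symmetry when the concentration points are the two poles of $\Sp$; one must leverage the constraint that $\Gamma^{(N)}$ consist of exactly $N$ prescribed parallels to extract the deficit, presumably via a careful analysis of limiting eigenmeasures along maximising sequences in the spirit of~\cite{KS, KNPP}.
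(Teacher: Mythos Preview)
Your overall picture of items (1)--(4) and (6) is essentially what the paper does, but the existence argument and the proof of (5) have a genuine gap, and the two are intertwined.

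\textbf{Wrong compactness gap.} Knowing $T_1(N)<8\pi$ does not buy you compactness of maximising sequences in the $\mathbb{S}^1$-symmetric class. After passing to cylindrical coordinates, a rotationally symmetric problem on $(\Sp,\Gamma^{(N)})$ is parametrised by spacings $\alpha\in(0,\infty)^{N-1}$ and weights $\beta\in\Delta^{N-1}$, and the actual failure modes along a maximising sequence are (i) some $\alpha_i\to 0$ (parallels collide), (ii) some $\alpha_i\to\infty$ (parallels separate), or (iii) some $\beta_i\to 0$. Each of these drops you to an $(N-1)$-parallel problem, so the relevant gap is $T_1(N)>T_1(N-1)$, not $T_1(N)<8\pi$. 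The homogenisation mechanism of Theorem~\ref{thm:trans_homogenisation} is irrelevant here; the degeneration is much more elementary and is not excluded by the Hersch bound.

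\textbf{Circularity in (5).} Your argument for (5) presupposes (1) for $N+1$, i.e.\ that $T_1(N+1)=2\area(\mC(N+1))$, which requires a maximiser for $N+1$ to exist --- and in the scheme above, existence for $N+1$ is exactly what you would deduce from $T_1(N+1)>T_1(N)$. Moreover, ``inserting a thin catenoidal neck into $\mC(N)$'' does not produce $\mC(N+1)$: the resulting surface is not balanced, so comparing its area to anything is not informative.

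\textbf{What the paper does instead.} The paper runs an inductive bootstrap. For fixed $\alpha$ it reformulates $\sup_\beta\mathcal F_N(\alpha,\beta)$ as the maximum of $\int_\Gamma v$ over potentials $v$ with $\nu_1(\alpha,v)\geq 0$ (a Schr\"odinger-type constraint). A uniform upper bound on such $v$ gives a maximiser $v_{\max}$, and the Euler--Lagrange analysis plus the Hopf lemma forces $v_{\max}>0$ componentwise; in particular any $\beta$ with a zero entry is \emph{strictly} suboptimal. This is the engine for (5): given the maximiser $(\alpha,\beta)$ for $T_1(N)$, append an extra parallel with weight $0$ to obtain $(\alpha',\beta')$ with $\mathcal F_{N+1}(\alpha',\beta')=T_1(N)$; strict positivity of optimal weights then gives $T_1(N)<T_1(N+1,\alpha')\leq T_1(N+1)$. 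That in turn rules out degenerations (i)--(iii) for maximising sequences at level $N+1$, yielding existence there and closing the induction. The ``critical potentials are strictly positive'' step is the missing idea in your proposal.
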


The case $N=2$ of Theorem~\ref{thm:rot_symm} should be compared with the analysis of rotationally symmetric metrics on the annuli~\cite{FS0}, where it is possible to directly compute the first Steklov eigenvalue for any rotationally symmetric metric and find the maximiser that way. While it is possible to prove Theorem~\ref{thm:rot_symm} in the same way for $N=2$, this method is no longer feasible for larger $N$ as computing Steklov transmission eigenvalues on $(\Sp,\Gamma^{(N)})$ requires solving a polynomial equation of degree $N$. Instead, we prove Theorem~\ref{thm:rot_symm} by using the connection between critical metrics and free curve minimal surfaces. The most technically challenging part is to show the existence of a maximising metric, which ultimately follows from Theorem~\ref{thm:rot_symm}.(5). The maximiser then has to correspond to a free curve surface of revolution in $\B^3$, which is then uniquely identified in~\cite{KZ}. The authors referred to such configuration of catenoid and flat caps as "balanced". These surfaces are obtained by a rotation a particular collection of catenaries and straight lines, see Figure \ref{Catenarydal}.


\begin{figure}[!h]
\hspace*{0cm} \includegraphics[width=11.5cm]{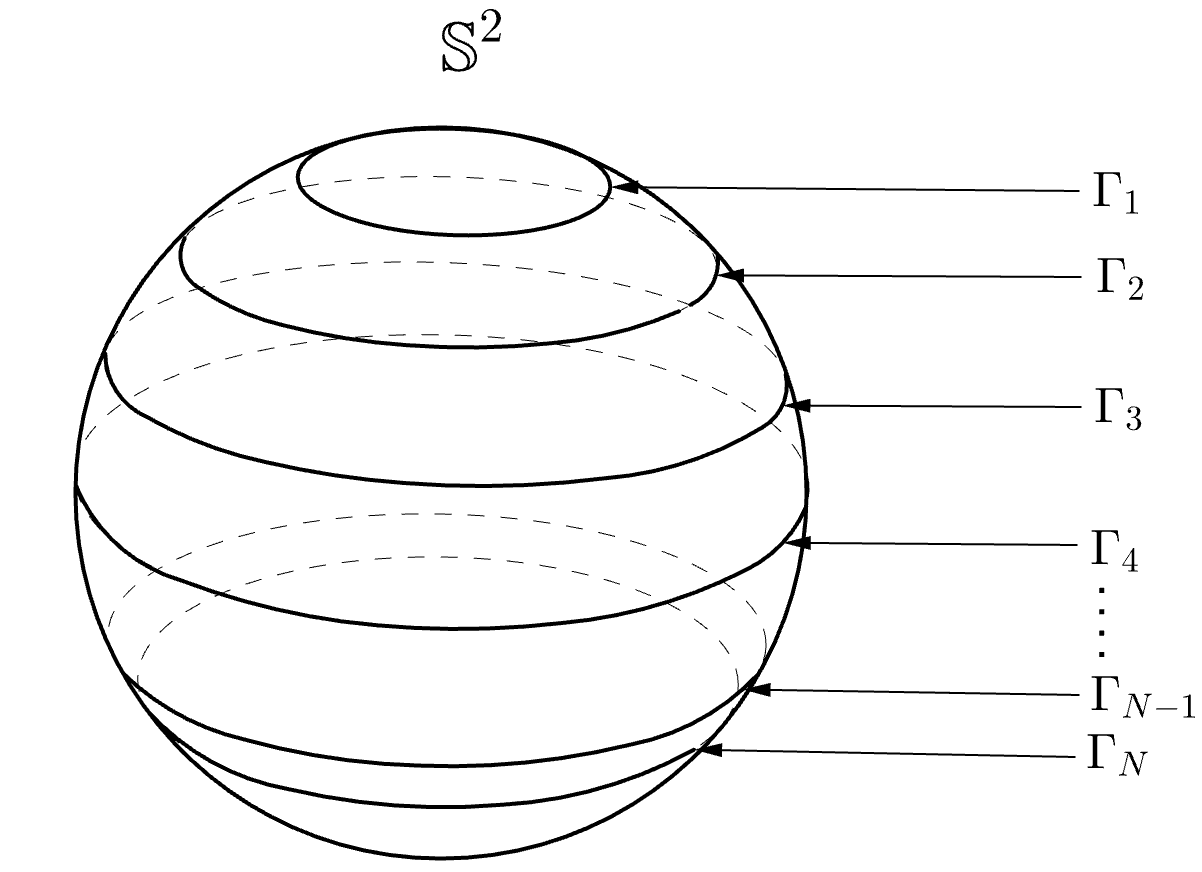}
\caption{$N$ Paralells on $\mathbb{S}^2$}\label{Paralells}

\end{figure}

\begin{figure}[!h]
\hspace*{0.0cm} \includegraphics[width=8.5cm]{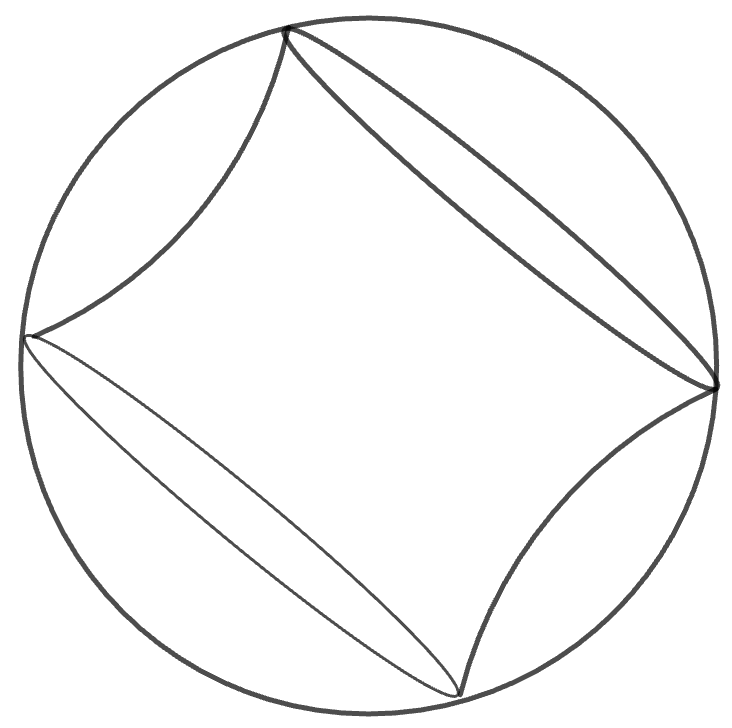}
\caption{The critical drum}\label{Crit_Drum}
\end{figure}


\begin{figure}[!h]
\hspace*{0.0cm} \includegraphics[width=12cm]{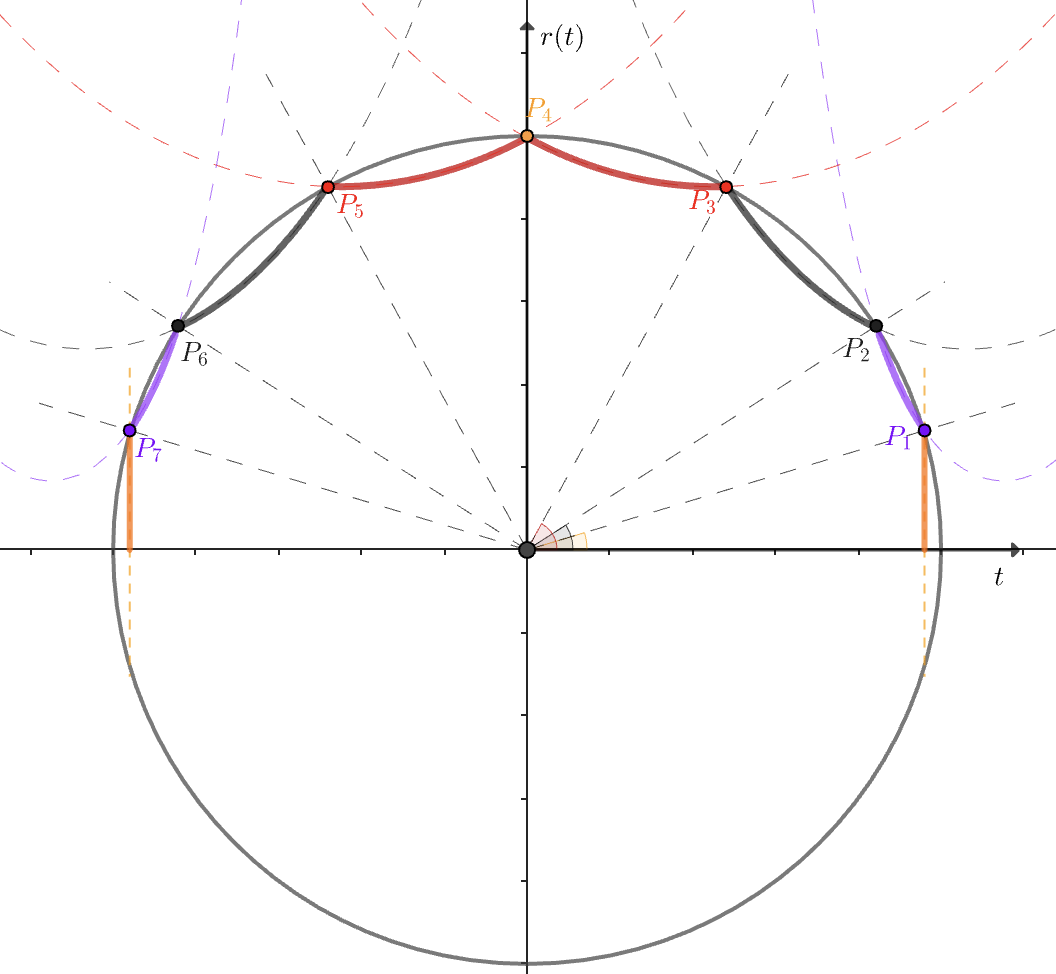}
\caption{$xy-$plane representation of $\mathcal{C}(N)$ before rotation, $N=7$}\label{Catenarydal}
\end{figure}

\subsection{Plan of the paper} 

We start in Section \ref{sec:prel} with preliminary facts about the transmission eigenvalues, including their interpretation as variational eigenvalues of measures and Steklov eigenvalues, multiplicity bound and we recall recent results on homogenisation in spectral geometry.   

Section \ref{sec:Homog} is dedicated to the proof  of Theorem \ref{thm:trans_homogenisation}. 
In Section \ref{sec:Crit_conditions} we establish characterisation of critical metrics in several situations in the spirit of~\cite{FS, NadirashviliT2,KKMS}. 

The aim of section \ref{sec:existence} is to prove the existence of maximal metrics in the class of rotationally symmetric metrics on $\mathbb{S}^2$ with $N$ parallels following the approach in~\cite{KNPP}. This completes the proof of Theorem \ref{thm:rot_symm}

Appendix \ref{app} provides details of the explicit construction of the critical drum. Finally, Appendix \ref{app2} summarises the results from~\cite{KZ} on minimal surfaces of revolution in $\mathbb{B}^3$, which are used in the proof of uniqueness of Theorem~\ref{thm:rot_symm}.(1).

\subsection*{Acknowledgements} The authors would like to thank Iosif Polterovich and Jiahua Zou for fruitful discussions. This project is part of ADN PhD thesis at Université de Montréal under the supervision of Iosif Polterovich and Mikhail Karpukhin. ADN is supported by ISM and the Banque Nationale ESP scholarship.

\section{Preliminaries and Initial results}\label{sec:prel}

\subsection{Eigenvalues of measures}\label{sec:eigenv_measure}
The Steklov transmission eigenvalues~\eqref{eq:St_transmission} fall into the framework of variational eigenvalues~\cite{GKL,GNY,Kokarev}, below we follow the exposition in~\cite{GKL}. Given a closed surface $M$, a conformal class $[g]$ and a Radon measure $\mu$ one defines variational eigenvalues $\lambda_k(M,[g],\mu)$ using the classical Rayleigh quotient as follows 
\[
\lambda_k(M,[g],\mu) = \inf_{F_{k+1}}\sup_{v\in F_{k+1}\setminus\{0\}}\frac{\int_M|du|_g^2\,dv_g}{\int_M u^2\,d\mu},
\]
where $F_{k+1}\subset C^\infty(M)$ is a $(k+1)$-dimensional subspace. One can also define normalised eigenvalues
\[
\bar\lambda_k(M,[g],\mu) = \lambda_k(M,[g],\mu)\mu(M).
\]
Following~\cite{GKL}, we say that the measure $\mu$ is {\em admissible} if the identity operator on $C^\infty(M)$ induces a compact operator $W^{1,2}(M,g)\to L^2(M,\mu)$. The eigenvalues of admissible measures behave similarly to eigenvalues of the classical elliptic operators, e.g. they admit eigenfunctions and have finite multiplicity. Prominent examples include $\mu = dv_h$ for $h\in [g]$, which recovers Laplacian eigenvalues $\lambda_k(M,h)$~\cite[Example 4.2]{GKL}; hypersurface measures $\mu = ds^\Gamma_h$, $h\in[g]$, which yield Steklov transmission eigenvalues $\tau_k(M,\Gamma,h)$~\cite[Example 4.4]{GKL}; and Steklov eigenvalues on manifolds with boundary~\cite[Example 4.3]{GKL}. 

In particular, $\tau_k(M,\Gamma,g)$-eigenfunction $f_k$ satisfies the weak eigenvalue equation,
\[
\int_{M}\nabla f_k\cdot\nabla u \,dv_g= \tau_k(M,\Gamma,g)\int_{\Gamma}f_ku \,ds_g , \ \forall u\in W^{1,2}(M,ds_g^\Gamma).
\]
For the sake of completeness, recall that given a manifold with boundary $(N,g)$ Steklov eigenvalues are given by
 \[
\sigma_k(N,g) = \inf_{F_{k+1}}\sup_{v\in F_{k+1}\setminus\{0\}}\frac{\int_N|du|_g^2\,dv_g}{\int_{\bd N} u^2\,ds_g},
\]
where $ds_g$ is the volume form on $\bd N$.

In~\cite{KS} it is proved that $\bar\lambda_1$-optimisation problem for admissible measures is equivalent to the optimisation of the Laplacian eigenvalues. Namely, for any admissible measure $\mu$
\[
\bar\lambda_1(M,[g],\mu)\leq\Lambda_1(M,[g])
\]
with equality iff $\mu=dv_h$ for $\bar\lambda_1$-maximal metric $h\in[g]$. Hence, $T_1(M,\Gamma,[g])\leq \Lambda_1(M,[g])$. In particular, combining this with the result of Hersch~\cite{Hersch} one has $T_1(\Sp,\Gamma,[g])\leq 8\pi$ for any $\Gamma\subset \Sp$. Similarly, using~\cite[Theorem 5.2]{GKL} together with $\Lambda_k(\Sp) = 8\pi k$ proved in~\cite{KNPP2}, yields the bound 
\begin{equation}
\label{ineq:Tk}
T_k(\Sp,\Gamma)\leq 8\pi k
\end{equation}
valid for any $\Gamma$. This brings us to our first result, which is an improvement of~\eqref{ineq:Tk} in the case $\Gamma$ is the equator.

\begin{proposition}
    Let $M=\Sp$ be a unit sphere in $\R^3$, $g = g_{\mathrm{st}}$ be the standard round metric and $\Gamma$ be the equator. Then for any $h\in[g]$ one has
    \[
    \tau_k(\Sp,\Gamma,h) = 2\sigma_k(\Sp_+,h),
    \]
    where $\Sp_+$ is one of the connected components of $\Sp\setminus\Gamma$.
\end{proposition}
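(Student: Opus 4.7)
The plan is to exploit the conformal invariance of the Dirichlet energy in dimension $2$ together with the reflection symmetry of the round sphere across its equator. Writing $h = e^{2\omega} g$, conformal invariance lets me rewrite both Rayleigh quotients against the round metric:
\[
R_\tau(u) = \frac{\int_\Sp |du|_g^2 \, dv_g}{\int_\Gamma u^2 \, ds_h}, \qquad R_\sigma(u) = \frac{\int_{\Sp_+} |du|_g^2 \, dv_g}{\int_\Gamma u^2 \, ds_h},
\]
so that both denominators are literally the same integral on $\Gamma$, and the only difference is the domain of integration in the numerator.

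Let $R \colon \Sp \to \Sp$ be the reflection across $\Gamma$. Since $R$ is an isometry of $g$ and fixes $\Gamma$ pointwise, the functional $R_\tau$ is invariant under the involution $u \mapsto u \circ R$, and $W^{1,2}(\Sp)$ decomposes orthogonally into $R$-even and $R$-odd subspaces. Any $R$-odd function has vanishing trace on the fixed locus $\Gamma$ and therefore has infinite $R_\tau$, so the min-max for $\tau_k$ can be restricted to $R$-even test subspaces. Even extension $v \mapsto \tilde v$ provides an isomorphism from $W^{1,2}(\Sp_+)$ onto the $R$-even part of $W^{1,2}(\Sp)$, and because $R$ is a $g$-isometry
\[
\int_\Sp |d\tilde v|_g^2 \, dv_g = 2 \int_{\Sp_+} |dv|_g^2 \, dv_g, \qquad \int_\Gamma \tilde v^2 \, ds_h = \int_\Gamma v^2 \, ds_h,
\]
so $R_\tau(\tilde v) = 2 R_\sigma(v)$. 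Substituting into the min-max definitions of $\tau_k$ and $\sigma_k$ delivers the identity $\tau_k(\Sp, \Gamma, h) = 2\sigma_k(\Sp_+, h)$.

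The main technical point I expect to justify carefully is the reduction of the min-max for $\tau_k$ from arbitrary $(k+1)$-dimensional subspaces $F \subset W^{1,2}(\Sp)$ to $R$-even such subspaces. The argument writes $F$ as the graph of a linear map $\phi$ from its even projection $F_e$ onto its odd projection and checks that $\sup_F R_\tau \geq \sup_{F_e} R_\tau$ with equality when $\phi = 0$, so that passing to the infimum over all $F$ coincides with the infimum over $R$-even subspaces of dimension $k+1$, which by the previous paragraph equals $2\sigma_k(\Sp_+, h)$. As a sanity check, the identity also respects $\tau_0 = 0 = 2\sigma_0$, as both bottom eigenfunctions are constants.
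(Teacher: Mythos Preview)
Your proof is correct and takes a genuinely different route from the paper's. The paper argues at the level of eigenfunctions: it reflects a Steklov eigenfunction on $(\Sp_+,h)$ across the equator, notes that conformal invariance of harmonicity in dimension~$2$ makes the reflection $g$-harmonic (hence $h$-harmonic) on $\Sp_-$, checks that the two outward $g$-normal derivatives coincide so the transmission condition reads $\partial_{n_1}u+\partial_{n_2}u = 2\sigma_k \rho\,u$, and then invokes completeness of the Steklov eigenfunctions in $L^2(\Gamma)$ to conclude there are no further transmission eigenvalues. Your argument instead compares the two min--max problems directly: you reduce the transmission min--max to $R$-even subspaces (using that odd functions have zero trace on $\Gamma$ and hence infinite Rayleigh quotient, together with the inequality $R_\tau(u)\geq R_\tau(P_e u)$ coming from orthogonality of even and odd parts in the Dirichlet form), and then identify even subspaces with subspaces on $\Sp_+$ via even extension. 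The paper's route is shorter and more spectral in flavour, but relies on the completeness statement; yours is purely variational and self-contained, and in particular never needs to know that the transmission problem has a complete system of eigenfunctions. One small point worth making explicit in your write-up: since the paper's definition of the variational eigenvalues uses $C^\infty(M)$ test spaces and the even extension of a smooth function on $\overline{\Sp_+}$ is generally only Lipschitz across $\Gamma$, your passage to $W^{1,2}$ is essential and should be justified by the standard density argument for admissible measures.
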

\begin{proof}
By the conformal invariance of the harmonic extension, it is easy to see that by reflecting $\sigma_k(\Sp_+,h)$-eigenfunction across $\Gamma$ we obtain a Steklov transmission eigenfunction with eigenvalue $2\sigma_k(\Sp_+,h)$. Since Steklov eigenfunctions form a basis on $L^2(\Gamma)$, by~\cite[Proposition 4.1]{GKL} there are no other Steklov transmission eigenvalues.
\end{proof}

\begin{corollary}
Let $M=\Sp$ be a unit sphere in $\R^3$, $g = g_{\mathrm{st}}$ be the standard round metric and $\Gamma$ be the equator.
For $(g,\Gamma)$ as above one has
\begin{equation}
\label{eq:St_trans_W}
T_k(\Sp,\Gamma,[g])\leq 4\pi k,
\end{equation}
and the constant on the right hand side is optimal.
\end{corollary}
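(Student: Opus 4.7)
The plan is to use the preceding proposition to reduce the bound to the Steklov conformal eigenvalue problem on the hemisphere, and then to invoke the classical sharp conformal bound on the disk. For any $h\in[g]$, combining the proposition with the identity $\length_h(\Gamma)=\length_h(\bd\Sp_+)$ gives
\[
\bar\tau_k(\Sp,\Gamma,h) \;=\; 2\,\bar\sigma_k(\Sp_+,h|_{\Sp_+}).
\]
Since $\Sp_+$ is simply connected, by uniformisation $h|_{\Sp_+}$ lies in the unique conformal class of metrics on the disk, and the sharp Hersch--Payne--Schiffer bound $\bar\sigma_k \leq 2\pi k$ for simply connected surfaces yields $\bar\tau_k(\Sp,\Gamma,h)\leq 4\pi k$ at once, which is precisely~\eqref{eq:St_trans_W}.

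For optimality, I would take a maximising sequence $h'_i$ for $\bar\sigma_k$ on the hemisphere---for instance, the metrics concentrating near $k+1$ boundary points constructed by Girouard--Polterovich, or those produced by boundary homogenisation in the spirit of~\cite{GKL,GL}---so that $\bar\sigma_k(\Sp_+,h'_i)\to 2\pi k$. Reflecting the conformal factor of $h'_i$ across $\Gamma$ gives a Lipschitz, reflection-invariant metric on $\Sp$, which I would then smooth in a thin collar of $\Gamma$ to obtain a smooth metric $h_i\in[g]$ (the conformal class on $\Sp$ being unique by uniformisation). Applying the proposition to $h_i$ then yields $\bar\tau_k(\Sp,\Gamma,h_i)\to 4\pi k$, so the constant $4\pi k$ is optimal.

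The main obstacle lies in the optimality half, specifically in justifying that the smoothing of the reflected metric across $\Gamma$ perturbs $\bar\tau_k$ by $o(1)$. This is a continuity statement for variational eigenvalues of admissible measures (Section~\ref{sec:eigenv_measure}): since $\tau_k(\Sp,\Gamma,h)=\lambda_k(\Sp,[g],ds^\Gamma_h)$ and the hypersurface measure $ds^\Gamma_h$ depends only on the restriction of $h$ to $\Gamma$, one may arrange for the smoothing to be supported strictly off $\Gamma$, so that $ds^\Gamma$ is unchanged and only the conformal factor in the bulk varies slightly. Continuity of $\lambda_k(\Sp,[g],\mu)$ in the conformal class, together with a standard quasi-isometry estimate, then guarantees that the limit $4\pi k$ survives the smoothing step.
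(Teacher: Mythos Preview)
Your argument is correct and follows the same route as the paper: reduce to the Steklov problem on the hemisphere via the preceding proposition, then invoke the sharp Hersch--Payne--Schiffer bound from~\cite{GP,HPS}. The smoothing discussion you flag as an obstacle is in fact unnecessary: since both $\bar\tau_k(\Sp,\Gamma,h)$ and $\bar\sigma_k(\Sp_+,h)$ depend only on the restriction of $h$ to $\Gamma$ (equivalently, on a density $\rho\in C^\infty(\Gamma)$, as in the paper's density reformulation $T_k(M,\Gamma,[g])=\sup_{\rho>0}\bar\tau_k(M,\Gamma,g,\rho)$), any smooth positive extension of a maximising boundary density to $\Sp$ already yields a legitimate maximising sequence in $[g]$, with no reflection or collar-smoothing required.
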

\begin{proof}
Follows from Hersch-Payne-Weinberger inequality for simply connected surfaces~\cite{GP, HPS}.
\end{proof}

\begin{remark}
Since there exists a conformal automorphism of $\Sp$ that sends equator to any parallel, the sharp upper bound~\eqref{eq:St_trans_W} continues to hold with $\Gamma$ replaced by any circle on $\Sp$. This proves Theorem \ref{thm:rot_symm} for $N=1$.
\end{remark}

\subsection{Equivalent formulations of optimisation problems: negative eigenvalues and densities} The quantities $T_k(M,\Gamma,[g])$ and $T_k(M,\Gamma)$ admit equivalent characterisations that we use interchangeably throughout the paper.
First, note that if $h = \rho^2 g\in [g]$, then $\Delta_{h} = \rho^{-2}\Delta_g$ and the unit normals with respect to both metrics are related as $n_{h,i} = \rho^{-1} n_{g,i}$, so that Steklov transmission eigenvalues on $(M,\Gamma,h)$ satisfy the equation
\begin{equation*}
\left\{ \begin{array}{rcll}
\Delta_{h} u & = & 0 & \text{ in }M\setminus\Gamma\\
\partial_{n_1}u+\partial_{n_2}u & = & \tau \rho u & \text{ on }\Gamma,
\end{array}\right.
\end{equation*}
which coincide with eigenvalues $\lambda_k(M,[g],\rho ds_g^\Gamma)$ associated with the measure $\rho ds_g^\Gamma$ supported on $\Gamma$. In the following we sometimes refer to them as Steklov transmission eigenvalues with density and denote them as $\tau_k(M,\Gamma,g,\rho)$. To summarise, one has
\[
T_k(M,\Gamma,[g]) = \sup_{0<\rho\in C^\infty(\Gamma)}\bar \tau_k(M,\Gamma,g,\rho).
\]
Second, as we have observed earlier, due to the invariance of the Steklov transmission spectrum under the action of diffeomorphisms, $T_k(M,\Gamma)$ only depends on the diffeomorphism class of the pair $(M,\Gamma)$. Namely, we write $(M,\Gamma)\sim (M,\Gamma')$ if there exists a diffeomorphism $F\colon M\to M$ such that $F(\Gamma) = \Gamma'$. With those notations, one has
\[
T_k(M,\Gamma) = \sup_{0<\rho\in C^\infty(M),\,(M,\Gamma')\sim (M,\Gamma)} \bar \tau_k(M,\Gamma',g,\rho).
\]
Finally, we recall the relation between eigenvalue optimisation problems and optimal bounds for the number of negative eigenvalues of the Schr\"odinger operators, see e.g.~\cite{GNY, GNS, KNPP, KS2}. Given $v\in C^\infty(\Gamma)$ consider eigenvalues $\nu_k(M,\Gamma,g,v)$ of the following problem
\begin{equation}\label{eq:neg_eigenvalues}
\left\{ \begin{array}{rcll}
\Delta_g u & = & \nu u & \text{ in }M\setminus\Gamma\\
\partial_{n_1}u+\partial_{n_2}u & = & v u & \text{ on }\Gamma,
\end{array}\right.
\end{equation}
and denote by $N(v)$ the number of negative numbers among eigenvalues $\nu_k(M,\Gamma,g,v)$. We write that $v\in\mP_k(\Gamma)$ if $N(v)\leq k$. By comparing the variational characterizations, it is easy to see that for any $0<\rho\in C^\infty(\Gamma)$ one has that $v_\rho = \tau_k(M,\Gamma,g,\rho)\rho\in \mP_k(\Gamma)$, see e.g.~\cite[Section 2.2]{KNPP} or~\cite[Section 4.2]{KS2}. Furthermore, $\int_\Gamma v_\rho\,ds_g  = \bar\tau_k(M,\Gamma,g,\rho)$. This implies
\[
T_k(M,\Gamma,[g]) = \sup_{0<v\in \mP_k(\Gamma)} \int_\Gamma v\,ds_g \leq \sup_{v\in \mP_k(\Gamma)} \int_\Gamma v\,ds_g=: \mV_k(M,\Gamma,[g]).
\]
The quantity $\mV_k(M,\Gamma,[g])$ is easier to study variationally as one does not need to worry about preserving the condition $v>0$ while choosing a variation. Indeed, one of the approaches to proving existence of conformally maximal metrics is to first prove the equality $T_k = \mV_k$ and then use a less restrictive definition of $\mV_k$. For example, this strategy is used in~\cite{KNPP} in the case of the classical Laplacian. Below, we use similar ideas in the proof of Theorem~\ref{thm:rot_symm}.

\subsection{Convergence and homogenisation}
\label{sec:intro_hom}

Our main inspiration for Theorem~\ref{thm:trans_homogenisation} is an analogous result that has been obtained for Steklov eigenvalues in~\cite{GL}, see also~\cite{GKL}.

\begin{theorem}[Theorem 1.1 in~\cite{GL}]
\label{thm:GL}
    Let $(M,g)$ be a closed Riemannian  surface. Then there exists a sequence of (connected) domains $\Omega_i\subset M$ such that for any $k\geq0$
    \[
    \lim_{i\to\infty}\bar\sigma_k(\Omega_i,g) = \Lambda_k(M,[g]).
    \]
\end{theorem}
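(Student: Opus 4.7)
The plan is a perforation/homogenisation argument. Given $\epsilon > 0$, choose a smooth conformal factor $\rho > 0$ on $M$ so that $h := \rho^2 g \in [g]$ satisfies
\[
\bar\lambda_k(M,h) \;\geq\; \Lambda_k(M,[g]) - \epsilon;
\]
such $h$ exists by the definition of $\Lambda_k(M,[g])$ as a supremum. The strategy is then to perforate $M$ by removing a large number of small geodesic disks, chosen so that the $g$-arc-length measure on the resulting boundary approximates the volume measure $dv_h$; on the perforated domain, the Steklov eigenvalues (with respect to $g$) should converge to the Laplacian eigenvalues of $(M,h)$. A diagonal choice $\epsilon = \epsilon_i \to 0$ with perforation scale $\delta_i \to 0$ then produces the desired sequence $\Omega_i$.

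Concretely, for a small parameter $\delta>0$ I would fix a partition of $M$ into $g$-geodesic cells $\{Q_j\}$ of diameter of order $\delta$, pick $x_j \in Q_j$ in the interior of each cell, and choose
\[
r_j \;=\; \frac{1}{2\pi} \int_{Q_j} \rho^2 \, dv_g \;=\; O(\delta^2),
\]
so that $r_j \ll \delta$ and $\Omega_\delta := M \setminus \bigsqcup_j \overline{B_g(x_j, r_j)}$ is connected. By construction, $\length_g(\partial \Omega_\delta) = \area(M,h) + o(1)$ and, for any continuous function $\phi$,
\[
\int_{\partial \Omega_\delta} \phi \, ds_g \;\longrightarrow\; \int_M \phi \, dv_h \quad\text{as } \delta\to 0.
\]

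The heart of the proof is the spectral convergence $\sigma_k(\Omega_\delta, g) \to \lambda_k(M,h)$. For the upper bound, using Laplacian eigenfunctions $f_0, \ldots, f_k$ on $(M,h)$ as a $(k+1)$-dimensional test space in the Steklov Rayleigh quotient on $(\Omega_\delta,g)$, the Dirichlet-energy numerator is unchanged (up to the small holes) and the boundary denominator converges to $\int_M f_i f_j\, dv_h$ by the weak convergence of measures above. The lower bound — the main obstacle — requires extending an $L^2(\partial\Omega_\delta, ds_g)$-orthonormal family of Steklov eigenfunctions from $\Omega_\delta$ back to $M$ to furnish a valid test family for $\lambda_k(M,h)$, with only $o(1)$ loss of energy. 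This is done by harmonic/capacitary replacement inside each removed disk: the $2$-capacity $\ca(\overline{B_g(x_j,r_j)}, B_g(x_j,\delta))$ is of order $1/|\log(r_j/\delta)| = O(1/|\log\delta|)$, which vanishes as $\delta\to 0$. Combined with uniform $L^\infty$ bounds on the rescaled eigenfunctions (obtained by Moser iteration and the fact that boundary measures on $\partial\Omega_\delta$ are uniformly admissible in the sense of Section~\ref{sec:eigenv_measure}), this shows that the extended family has Dirichlet energy differing from that of the original by $o(1)$, and the lower bound follows. Assembling both inequalities yields $\bar\sigma_k(\Omega_\delta, g) \to \bar\lambda_k(M,h) \geq \Lambda_k(M,[g]) - \epsilon$, and a standard diagonal extraction produces the sequence $\Omega_i$ claimed.
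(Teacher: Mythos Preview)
This statement is not proved in the paper: it is quoted as Theorem~1.1 of \cite{GL} and used as a black box. The paper invokes it only to deduce that for the curves $\Gamma_i := \partial\Omega_i$ one has $\bar\tau_k(M,\Gamma_i,g)\to\Lambda_k(M,[g])$, and then proceeds to modify the topology of these curves in Section~\ref{sec:Homog}. So there is no ``paper's own proof'' to compare your attempt against.

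That said, your sketch does match the overall strategy of \cite{GL}: perforate $M$ by a large family of small geodesic disks with radii tuned so that the boundary arc-length measure approximates a prescribed volume measure $dv_h$, and then establish spectral convergence of Steklov eigenvalues on the perforated domain to Laplace eigenvalues of $(M,h)$. One comment: in \cite{GL} the holes are placed using a maximal $\varepsilon$-separated net and the associated Voronoi tessellation rather than an arbitrary partition into cells, and the radii are calibrated so that $\ell_g(\partial B_{r_j}(p_j)) = \beta(p_j)\,\mathrm{Vol}(\mathbb{V}_\varepsilon(p_j))$ for a target density $\beta$; this is visible in the commented-out portions of the present paper's source. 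The lower bound on eigenvalues in \cite{GL} is handled via the dual convergence of measures in $(W^{1,p})^*$ rather than by an explicit capacitary extension as you outline, but the underlying mechanism---vanishing relative capacity of the holes---is the same.
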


Observe that by comparing the variational characterisations of eigenvalues, for $\Omega\subset M$ one always has the following
\begin{equation*}
    \sigma_k(\Omega, g)\leq \tau_k(M,\bd\Omega,g).
\end{equation*}

As a result, Theorem~\ref{thm:GL} combined with~\eqref{ineq:TLbound} implies that for $\Gamma_i:=\bd\Omega_i$ one has
\[
\lim_{i\to\infty}\bar\tau_k(M,\Gamma_i,g) = \Lambda_k(M,[g]).
\]

This is very close to the statement of Theorem~\ref{thm:trans_homogenisation}, the only missing conclusion is the statement about the topological type of the curves $\Gamma_i$. In fact, from the proof of Theorem~\ref{thm:GL} it follows that $\Gamma_i$ is a collection of small circles equidistributed throughout $(M,g)$.  
In Section~\ref{sec:Homog} we show how to deduce Theorem~\ref{thm:trans_homogenisation} from equation~\eqref{eq:Gammai_lim}. We conclude the present section by recalling the following convenient tools for studying limiting behaviour of eigenvalues.
\begin{proposition}[Kokarev~\cite{Kokarev}]
\label{prop:weak_conv}
    If $\mu_n,\mu$ are Radon measures that satisfy $\mu_n\rightharpoonup^*\mu$, then for any $k\geq 0$
    \[
    \limsup_{n\to\infty}\lambda_k(M,[g],\mu_n)\leq\lambda_k(M,[g],\mu)
    \]
\end{proposition}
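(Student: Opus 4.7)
The plan is to exploit the min-max variational characterization of $\lambda_k$ combined with the key observation that weak-$\ast$ convergence of Radon measures produces uniform convergence of any continuous quadratic form restricted to a finite-dimensional space of smooth test functions. Fix $\varepsilon>0$. By density of $C^\infty(M)$ in $W^{1,2}(M,g)$ (and by the admissibility assumption implicit in the definition of variational eigenvalues of $\mu$), I can choose a $(k+1)$-dimensional subspace $F\subset C^\infty(M)$ that is injectively embedded in $L^2(M,\mu)$ and satisfies
\[
R_\mu(F)\;:=\;\sup_{v\in F\setminus\{0\}}\frac{\int_M|dv|_g^2\,dv_g}{\int_M v^2\,d\mu}\;\leq\;\lambda_k(M,[g],\mu)+\varepsilon.
\]

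Next I would transfer $F$ to the measures $\mu_n$. Pick a basis $e_0,\dots,e_k$ of $F$; for each pair $i,j$ the product $e_ie_j$ is continuous on $M$, so weak-$\ast$ convergence gives $\int_M e_ie_j\,d\mu_n\to\int_M e_ie_j\,d\mu$. Hence the symmetric Gram matrices $G_n:=(\int e_ie_j\,d\mu_n)_{ij}$ converge entrywise to the positive-definite matrix $G:=(\int e_ie_j\,d\mu)_{ij}$. By finite-dimensionality this yields convergence of the induced quadratic forms uniformly on the $G$-unit sphere of $F$, so that setting $\delta_n:=\inf\{\,\int_M v^2\,d\mu_n: v\in F,\ \int_M v^2\,d\mu=1\,\}$ one has $\delta_n\to 1$. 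Consequently
\[
R_{\mu_n}(F)\;\leq\;\delta_n^{-1}R_\mu(F).
\]

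Combining the two inequalities with the min-max definition $\lambda_k(M,[g],\mu_n)\leq R_{\mu_n}(F)$ and passing to the limit gives
\[
\limsup_{n\to\infty}\lambda_k(M,[g],\mu_n)\;\leq\;R_\mu(F)\;\leq\;\lambda_k(M,[g],\mu)+\varepsilon,
\]
and letting $\varepsilon\to0$ concludes the proof. The main obstacle is the second step: one must rule out a degenerate scenario in which functions in $F$ are large in $L^2(\mu)$ but become small in $L^2(\mu_n)$, which could blow up the Rayleigh quotients with respect to $\mu_n$. The clean resolution is precisely the finite-dimensionality of $F$: the quadratic form $v\mapsto\int v^2\,d\mu_n$ depends only on the $(k+1)\times(k+1)$ Gram matrix $G_n$, and entrywise convergence $G_n\to G$ (available because each $e_ie_j$ is a continuous test function) upgrades automatically to uniform convergence on the unit sphere. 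Note that the opposite inequality (lower semicontinuity) genuinely fails in general, so the argument cannot be symmetrized; indeed, the homogenisation results cited in Section~\ref{sec:intro_hom} rely precisely on the possibility that $\limsup$ is strict.
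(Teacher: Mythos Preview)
Your argument is correct. The paper does not actually supply a proof of this proposition; it is simply quoted from Kokarev~\cite{Kokarev} as a known tool, so there is nothing in the paper to compare against beyond the statement itself. Your proof is the standard one: fix a near-optimal $(k+1)$-dimensional smooth test space $F$ for $\mu$, observe that weak-$\ast$ convergence gives entrywise convergence of the finite Gram matrices $G_n\to G$, and use positive-definiteness of $G$ (which follows from $R_\mu(F)<\infty$) to control the Rayleigh quotients for $\mu_n$ uniformly. One small remark: you do not need to invoke density of $C^\infty(M)$ in $W^{1,2}$, since in the paper's definition the competing subspaces $F_{k+1}$ are already required to lie in $C^\infty(M)$.
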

\begin{theorem}
\label{thm_eigen_conv}
Let $M$ be a closed surface and $[g]$ be a conformal class of metrics on $M$. Suppose that $1\leq p<2$, $\mu$ is an admissible measure on $M$, and let $(\mu_n)_n$ be a sequence of admissible measures on $M$ such that 
\[
\mu_n \longrightarrow \mu \quad \text{in } (W^{1,p}(M))^*.
\]
Then $\mu_n \overset{*}{\rightharpoonup} \mu$ and, for every $k \in \mathbb{N}$, one has
\[
\lambda_k(M, [g], \mu_n) \longrightarrow \lambda_k(M, [g], \mu).
\]
\end{theorem}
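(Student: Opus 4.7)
The plan splits into two parts: first establish the weak-$*$ convergence $\mu_n\overset{*}{\rightharpoonup}\mu$ from the dual-norm hypothesis, and then upgrade this to convergence of the variational eigenvalues by exploiting the finer $(W^{1,p})^*$ convergence. For the weak-$*$ part, the key observation is that on a closed surface $C^\infty(M)\hookrightarrow W^{1,p}(M)$ continuously, so $\|\mu_n-\mu\|_{(W^{1,p})^*}\to 0$ already gives $\int_M\phi\,d\mu_n\to \int_M\phi\,d\mu$ uniformly over bounded subsets of $C^\infty(M)$. Testing against the constant $\phi\equiv 1$ yields $\mu_n(M)\to\mu(M)$, so the total masses are uniformly bounded. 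Density of $C^\infty(M)$ in $C(M)$ together with a standard $3\varepsilon$-argument then extends convergence to every $f\in C(M)$.

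Once the weak-$*$ convergence is in hand, Proposition~\ref{prop:weak_conv} of Kokarev immediately yields
\[
\limsup_{n\to\infty}\lambda_k(M,[g],\mu_n)\leq \lambda_k(M,[g],\mu),
\]
so only the $\liminf$ inequality remains. The strategy is to fix $\eta>0$, choose for each $n$ an almost-optimal $(k+1)$-dimensional subspace $F_n\subset C^\infty(M)$ realising $\lambda_k(M,[g],\mu_n)+\eta$ in the min-max for $\mu_n$, normalise test functions $u\in F_n$ by $\int_M u^2\,d\mu_n=1$, and show that $F_n$ is still nearly optimal when plugged into the Rayleigh quotient for $\mu$. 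The crucial input is the nonlinear Sobolev estimate
\[
\|u^2\|_{W^{1,p}(M)}\leq C\,\|u\|_{W^{1,2}(M)}^2,
\]
valid for $1\leq p<2$ on a closed surface by combining the borderline embedding $W^{1,2}(M)\hookrightarrow L^q(M)$ for every finite $q$ with Hölder applied to $\nabla(u^2)=2u\,\nabla u$. Plugged into the duality bound $\left|\int_M u^2\,d(\mu-\mu_n)\right|\leq \|\mu_n-\mu\|_{(W^{1,p})^*}\|u^2\|_{W^{1,p}}$, this yields $\int_M u^2\,d\mu=1+o(1)$ uniformly over the normalised elements of $F_n$. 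Hence the Rayleigh quotient for $\mu$ on $F_n$ is at most $(\lambda_k(M,[g],\mu_n)+\eta)/(1+o(1))$, and using $F_n$ as a competitor for $\lambda_k(M,[g],\mu)$ followed by $\eta\to 0$ closes the argument.

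The main obstacle I anticipate is the bookkeeping around the constant component of $u$. Normalising by $\int_M u^2\,d\mu_n=1$ does not directly bound $\|u\|_{L^2(M,g)}$; only $\|u-\bar u\|_{L^2(M,g)}$ is controlled via Poincaré applied to $\|du\|_{L^2}\leq \sqrt{\lambda_k(M,[g],\mu_n)+\eta}$. To fix this, I would split $u=\bar u+(u-\bar u)$, use $\mu_n(M)\to\mu(M)$ together with a Cauchy--Schwarz bound on the cross term $\bar u\int(u-\bar u)\,d\mu_n$ to show that $\bar u^2\,\mu_n(M)$ stays bounded by the normalisation, and conclude that $\bar u$ itself is uniformly bounded. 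Once this is established, $\|u\|_{W^{1,2}(M)}$ is uniformly bounded on the normalised slice of $F_n$, the Sobolev estimate above applies, and the rest is routine continuity within the variational framework of~\cite{GKL}.
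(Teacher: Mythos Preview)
Your argument is correct, and the handling of the $\bar u$ bookkeeping you flagged does go through: the identity $1=\bar u^2\mu_n(M)+2\bar u\int(u-\bar u)\,d\mu_n+\int(u-\bar u)^2\,d\mu_n$, together with the bounds $|\int v\,d\mu_n|,\ |\int v^2\,d\mu_n|\leq C$ coming from the uniform $(W^{1,p})^*$ bound on $\mu_n$ and the Sobolev estimate for $v=u-\bar u$, yields a quadratic inequality in $|\bar u|$ once $\mu_n(M)\geq c_0>0$, hence the uniform $W^{1,2}$ bound you need.

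However, your route is genuinely different from the paper's. The paper does not reprove the $\liminf$ inequality; it simply invokes \cite[Proposition~4.11]{GKL}, which already establishes both weak-$*$ convergence and eigenvalue continuity under convergence in the dual of the Sobolev--Orlicz space $W^{1,2,-1/2}(M)$, and then observes the continuous embedding $W^{1,2,-1/2}(M)\hookrightarrow W^{1,p}(M)$ for $p<2$, which dually sends $(W^{1,p})^*$-convergence to $(W^{1,2,-1/2})^*$-convergence. Your direct argument effectively unpacks the mechanism behind that black box in this specific setting: the pointwise Sobolev fact $\|u^2\|_{W^{1,p}}\leq C\|u\|_{W^{1,2}}^2$ for $p<2$ on a surface is precisely what makes the Orlicz machinery work, and you have made it explicit. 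The paper's approach is shorter if one has~\cite{GKL} on the shelf; yours is self-contained and makes the role of the dimension-two Sobolev embedding transparent.
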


\begin{proof}
It is a straightforward consequence of~\cite[Proposition 4.11]{GKL} and of the continuous embedding
\[
W^{1,2,-1/2}(M) \hookrightarrow W^{1,p}(M),
\]
of the Sobolev–Orlicz spaces for $p<2$, see~\cite{GKL} for details.
\end{proof}

\subsection{Multiplicity Bounds} 
\label{sec:multiplicity}

To describe the geometry of minimal immersions associated with the critical metrics of $\bar\tau_1$, it is crucial to understand the maximum possible dimension of the target ball \( \mathbb{B}^{n+1} \) which is controlled by the multiplicity of the corresponding eigenvalue. First, note that one has Courant's Nodal domain theorem for Steklov transmission eigenvalues.
\begin{lemma}\label{Courant_Nodal_Domain}

Let \( (M,g) \) be a closed Riemannian surface and let \( \Gamma \subset M \) be a smooth simple curve, possibly disconnected. Then, any \( \tau_k \)-eigenfunction has at most \( k+1 \) nodal domains
\end{lemma}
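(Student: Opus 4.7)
The plan is to follow the classical Courant nodal domain argument adapted to the transmission setting, with the critical extra ingredient being unique continuation across the interface $\Gamma$.

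Assume for contradiction that some $\tau_k$-eigenfunction $f$ has at least $k+2$ nodal domains $\Omega_1,\dots,\Omega_{k+2}$, where a nodal domain is a connected component of $\{f\neq 0\}\subset M$. The first step is to consider the truncations $f_i:=f\cdot\mathbf{1}_{\Omega_i}$ for $i=1,\dots,k+2$. Because $f\in C(M)$ vanishes on $\partial\Omega_i$, each $f_i$ lies in $W^{1,2}(M)$, has support contained in $\overline{\Omega_i}$, and the $f_i$ are pairwise disjointly supported. Testing the weak formulation of the transmission eigenvalue equation with $u=f_i$ and exploiting that $\nabla f\cdot\nabla f_i=|\nabla f_i|^2$ a.e.\ on $\Omega_i$ and vanishes elsewhere, I obtain
\[
\int_M|\nabla f_i|^2\,dv_g=\tau_k\int_\Gamma f_i^2\,ds_g, \qquad i=1,\dots,k+2.
\]

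The second step is the standard dimensional trick. Let $F:=\mathrm{span}(f_1,\dots,f_{k+1})$, a $(k+1)$-dimensional subspace of $W^{1,2}(M)$. By disjoint supports, for $u=\sum_{i=1}^{k+1}c_if_i\in F$ one has
\[
\int_M|\nabla u|^2\,dv_g=\sum_i c_i^2\int_M|\nabla f_i|^2\,dv_g=\tau_k\sum_i c_i^2\int_\Gamma f_i^2\,ds_g=\tau_k\int_\Gamma u^2\,ds_g,
\]
so the Rayleigh quotient with respect to $ds_g^\Gamma$ equals $\tau_k$ on every non-zero $u\in F$ (provided $u\not\equiv 0$ on $\Gamma$; the case where $u\equiv 0$ on $\Gamma$ is handled separately, since then $u$ would be a non-trivial harmonic $W^{1,2}$ function vanishing on $\Gamma$, immediately forcing $u\equiv 0$ by the maximum principle/unique continuation on each side of $\Gamma$). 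Since $\dim F=k+1$ exceeds $\dim\mathrm{span}(f_0,\dots,f_{k-1})=k$, where $f_0,\dots,f_{k-1}$ are the first $k$ eigenfunctions, there exists a non-zero $w\in F$ which is $L^2(\Gamma,ds_g)$-orthogonal to $f_0,\dots,f_{k-1}$. By the variational characterization, such a $w$ satisfies $R(w)\geq\tau_k$; combined with the previous computation this yields $R(w)=\tau_k$, so $w$ is a $\tau_k$-eigenfunction.

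By construction $w$ vanishes identically on $\Omega_{k+2}$, which is an open subset of $M$. The third and decisive step is therefore to invoke the identity principle for Steklov transmission eigenfunctions to conclude $w\equiv 0$, contradicting $w\neq 0$. I expect this step to be the main obstacle, because eigenfunctions are only $C^0$ across $\Gamma$, so classical unique continuation must be applied on each side and propagated across the interface. Concretely: if $\Omega_{k+2}$ lies in a connected component $U$ of $M\setminus\Gamma$, then $w$ is smooth harmonic on $U$ and vanishes on an open set of $U$, hence $w\equiv 0$ on $U$ by Aronszajn's unique continuation. Then $w=0$ on $\Gamma\cap\overline U$, and the transmission condition $\partial_{n_1}w+\partial_{n_2}w=\tau_k w$ gives that the normal derivative from the other side vanishes on $\Gamma\cap\overline U$. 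This means that on the neighbouring component $U'\subset M\setminus\Gamma$, the harmonic function $w$ has vanishing Cauchy data on a piece of its boundary, so $w\equiv 0$ on $U'$ by the Cauchy unique continuation principle for harmonic functions. Iterating across the finitely many connected components of $M\setminus\Gamma$ yields $w\equiv 0$ on $M$, the desired contradiction.
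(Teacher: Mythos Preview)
Your proof is correct and follows essentially the same approach as the paper: truncate to nodal domains, use a dimension count to find a combination orthogonal to the first $k$ eigenfunctions, and derive a contradiction via unique continuation. You actually give more detail than the paper on the propagation of unique continuation across $\Gamma$ (the paper simply invokes ``unique continuation'' without spelling out the Cauchy-data argument from side to side), so your treatment is if anything more complete.
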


\begin{proof}
The proof is identical to the classical Courant nodal theorem for the Laplacian. Let \( u_k \) be a \( \tau_k \)-eigenfunction, and let \( u_j \) be \( \tau_j \)-eigenfunctions for \( j = 0, \dots, k-1 \). Suppose for contradiction that \( u_k \) has at least \( k+2 \) nodal domains \( N_1, \dots, N_{k+2} \). Define \( v_i := u_k \cdot \chi_{N_i} \in H^1(M) \), where \( \chi_{N_i} \) is the characteristic function of \( N_i \), for \( i = 1, \dots, k+1 \). The linear map
\[
F : \mathbb{R}^{k+1} \to \mathbb{R}^k, \quad 
(\alpha_1, \dots, \alpha_{k+1}) \mapsto \left( \int_{\Gamma} v u_0 \, ds_g, \dots, \int_{\Gamma} v u_{k-1} \, ds_g \right),
\]
with \( v = \sum_{i=1}^{k+1} \alpha_i v_i \), must have a non-trivial kernel. Thus, there exists \( v \neq 0 \) with \( \displaystyle\int_{\Gamma} v u_j \, ds_g = 0 \) for all \( j = 0, \dots, k-1 \). Now observe
\[
\int_M |\nabla v|_g^2 \, dv_g 
= \sum_{i=1}^{k+1} \alpha_i^2 \int_{N_i} |\nabla u_k|_g^2 \, dv_g 
= \tau_k \sum_{i=1}^{k+1} \alpha_i^2 \int_{\Gamma \cap N_i} u_k^2 \, ds_g 
= \tau_k \int_{\Gamma} v^2 \, ds_g.
\]
Thus,
\[
\frac{\int_M |\nabla v|^2 \, dv_g}{\int_{\Gamma} v^2 \, ds_g} = \tau_k = \inf_{\substack{w \in H^1(M) \\ \int_{\Gamma} w u_j \, ds_g = 0}} \frac{\int_M |\nabla w|^2 \, dv_g}{\int_{\Gamma} w^2 \, ds_g}.
\]
So \( v \) is a \( \tau_k \)-eigenfunction. But \( v \equiv 0 \) outside \( \overline{\cup_{i=1}^{k+1} N_i} \), which contains the open set \( N_{k+2} \). By unique continuation, this is a contradiction. Hence, \( u_k \) has at most \( k+1 \) nodal domains.    
\end{proof}
Now, one can show that there is an explicit upper bound for the multiplicity of the first non-trivial eigenvalue of the Steklov transmission in the setting of metrics invariant under the action of $\mathbb{S}^1.$

\begin{lemma}\label{Lem_multiplicity}
Let \( \Gamma^{(N)} \) be a union of \( N >1\) parallels on \( \mathbb{S}^2 \). Then for any metric $g$ invariant under the action of \( \mathbb{S}^1 \) by rotations about the \( z \)-axis, the multiplicity of the eigenvalue \( \tau_1(\mathbb{S}^2, \Gamma^{(N)}, g) \) is at most $3$.
\end{lemma}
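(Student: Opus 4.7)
The plan is to exploit the $\Sbb^1$-symmetry by Fourier-decomposing $\tau_1$-eigenfunctions in the angular variable and then bounding the contribution of each azimuthal mode separately. In suitable polar coordinates on $\Sp$ minus the two poles, an $\Sbb^1$-invariant metric takes the form $g = A(r)^2\, \dd r^2 + B(r)^2\, \dd\theta^2$ for $r \in (0,L)$ and $\theta \in \Sbb^1$, and the parallels $\Gamma^{(N)}$ correspond to the levels $r = r_1 < \cdots < r_N$. Since the Steklov transmission operator commutes with the $\Sbb^1$-action, the $\tau_1$-eigenspace $E$ is $\Sbb^1$-invariant and therefore splits into isotypic components
\[
E = V_0 \oplus \bigoplus_{m \geq 1} V_m,
\]
where $V_0$ consists of the rotation-invariant $\tau_1$-eigenfunctions and each $V_m$ for $m \geq 1$ consists of real eigenfunctions of the form $u_m(r)\bigl(a\cos(m\theta) + b\sin(m\theta)\bigr)$.

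Next I would rule out the modes $m \geq 2$ using the Courant nodal bound. Any nonzero element of $V_m$ vanishes along the $2m$ meridians on which the $\theta$-factor vanishes; these meridians alone cut $\Sp$ into $2m$ open sectors, so such an eigenfunction has at least $2m \geq 4$ nodal domains. By Lemma~\ref{Courant_Nodal_Domain}, any $\tau_1$-eigenfunction has at most $2$ nodal domains, forcing $V_m = \{0\}$ for every $m \geq 2$.

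It remains to bound $\dim V_0 \leq 1$ and $\dim V_1 \leq 2$. For a fixed mode $m \in \{0,1\}$, separation of variables shows that the radial profile $u_m$ satisfies a second-order linear ODE on each subinterval $(r_{i-1},r_i)$ (with the convention $r_0 = 0$, $r_{N+1} = L$), together with regularity at the poles (in particular $u_m(r) = O(r^{|m|})$ as $r \to 0^+$, and analogously at $r = L$) and continuity plus Steklov jump conditions
\[
u_m(r_i^-) = u_m(r_i^+), \qquad u_m'(r_i^+) - u_m'(r_i^-) = -\tau A(r_i)\, u_m(r_i)
\]
at each parallel. The regularity condition at the north pole cuts the two-dimensional ODE solution space on $(0,r_1)$ down to a one-dimensional one; the continuity/jump conditions at each transmission point $r_i$ then propagate this one-dimensional space uniquely through the next interval. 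The eigenvalue condition is that the resulting profile on $(r_N,L)$ also satisfy the regularity condition at the south pole, a single linear constraint on a one-dimensional space. Consequently this 1D transmission problem has simple eigenvalues, so there is at most one radial profile $u_m$ (up to scaling) for each mode $m \in \{0,1\}$. This yields $\dim V_0 \leq 1$ and $\dim V_1 \leq 2$, the latter being spanned by $u_1(r)\cos\theta$ and $u_1(r)\sin\theta$. Summing gives $\dim E \leq 1 + 2 = 3$.

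The main technical point I expect to have to justify carefully is the simplicity of the 1D transmission Sturm–Liouville eigenvalue problem: one must verify that the regularity condition at each pole genuinely imposes a one-dimensional constraint on the local ODE solution space, particularly for $m = 1$ where the singular behavior $u_1(r) \sim r$ at the poles must be read off from the indicial analysis. Once this is in place, the propagation-through-interfaces argument outlined above is routine and the bound follows.
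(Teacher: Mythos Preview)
Your argument is correct and shares its skeleton with the paper's proof: both decompose the $\tau_1$-eigenspace into $\Sbb^1$-isotypic pieces and kill all modes $m\ge 2$ via Courant's nodal bound. The difference lies in how the two low modes are handled.

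For $m=1$ the paper again invokes Courant: if two independent radial profiles $a_1,a_2$ existed, one could form a combination $a=ca_1+\tilde c a_2$ vanishing at some point, whence $a(z)\cos\theta$ would have at least four nodal domains. For $m=0$ the paper takes a linear combination that is constant on one of the annular components and reaches a contradiction through a separate monotonicity lemma (Proposition~\ref{prop:b_N_monotone}) for piecewise-affine $\tau_1$-eigenfunctions. Your route is more uniform: for each $m\in\{0,1\}$ you run a shooting argument for the radial transmission ODE, using that regularity at one pole selects a one-dimensional family which is then propagated through the interfaces. This yields simplicity of the radial problem directly and does not use anything specific to $\tau_1$ at this step; in particular it avoids the forward reference to the appendix. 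The paper's approach, on the other hand, extracts extra geometric information (sign and monotonicity of the profiles) that is reused later. Your indicial-root remark for the regularity condition at the poles is the right justification; it is the only point that needs care, and it goes through as you describe.
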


\begin{proof}

    Since $\mathbb{S}^1$ acts by isometries, the $\tau_1$-eigenspace decomposes into irreducible representations of $\mathbb{S}^1$. These representations are indexed by non-negative integers $j\geq 0$ with $j=0$ corresponding to trivial one-dimensional representations and $j\geq 1$ corresponding to two-dimensional representations, where $\theta\in\mathbb{S}^1$ acts by a rotation by the angle $j\theta$ around the origin. By the separation of variables argument, in cylindrical coordinates $(z,\theta)$ on $\Sp$, these eigenspaces are spanned by $b(z)$ for $j=0$ or $\{a(z)\cos j\theta, a(z)\sin j\theta\}$ for $j\geq 1$. In particular, the function $a(z)\sin j\theta$ contains the curves $\theta = m\frac{\pi}{j}$, $m=0,\ldots, 2j-1$ in its nodal set and, therefore, has at least $4$ nodal domains if $j\geq 2$. Hence, by the Courant nodal domain theorem the only representations that can occur as part of the $\tau_1$-eigenspace correspond to $j=0,1$. To conclude that the multiplicity is at most 3, it suffices to show that vector spaces spanned by $b(z)$ are no more than one dimensional and that vector spaces spanned by $\{a(z)\cos j\theta, a(z)\sin j\theta\}$ are no more than 2-dimensional.

    \begin{enumerate}
        \item Suppose $b_1(z)$ and $b_2(z)$ are two $\tau_1$-eigenfunctions. Let $U\subset \Sp\setminus\Gamma^{(N)}$, $U =\{z_1<z<z_2\}$ be any connected component homeomorphic to an annulus. Since $b_1$ and $b_2$ are affine in $U$, then there is a linear combination $b(z):=ab_1(z)+\tilde{a}b_2(z)$ that is constant in $U$. If $b$ is not zero, then we prove in Appendix \ref{app2} (see Proposition \ref{prop:b_N_monotone}) that $b$ should be strictly monotone. This contradiction implies that $b$ is zero everywhere and thus $b_1(z)$ and $b_2(z)$ span the same vector space.
        \item Suppose that $\{a_1(z)\cos j\theta, a_1(z)\sin j\theta\}$ and $\{a_2(z)\cos j\theta, a_2(z)\sin j\theta\}$ are two 2-dimensional representations of $\tau_1$. Then, by Courant Nodal domain theorem, $a_1$ and $a_2$ should be of constant sign. Take a linear combination $a(z):=ca_1(z)+\tilde{c}a_2(z)$ that vanishes at some point. Then again by Courant Nodal domain theorem, $a(z)$ should vanish identically,  which implies that $a_1$ and $a_2$ are proportional.
    \end{enumerate}
This concludes the proof.

\end{proof}

\section{Homogenisation} \label{sec:Homog}

\subsection{Proof of Theorem~\ref{thm:trans_homogenisation}}

We first recall from Section~\ref{sec:intro_hom} that for $(M,g)$ there exists a sequence $\Gamma_i\subset M$ of collections of small  disjoints circles  such that
\begin{equation}
\label{eq:Gammai_lim}
\lim_{i\to\infty}\bar\tau_k(M,\Gamma_i,g) = \Lambda_k(M,[g]).
\end{equation}

The idea now is to approximate the length measure of $\Gamma_i$ by the measures supported on curves of arbitrary isotopy class
and use Theorem~\ref{thm_eigen_conv}. Namely, the goal of this section is to prove the following proposition, which clearly implies Theorem~\ref{thm:trans_homogenisation}.
\begin{proposition}
    \label{prop:approx_curve}
    For any embedded curve $\Gamma$ there exists a sequence of curves $\Gamma_n'$ isotopic to $\Gamma$ such that for any $p>1$ there exists a density $0<\rho_n\in L^\infty(\Gamma_n')$ satisfying
    \[
    \rho_n\,ds_g^{\Gamma_n'}\to ds_g^{\Gamma_i}\text{ in } (W^{1,p}(M))^*
    \] 
    as $n\to\infty$.
\end{proposition}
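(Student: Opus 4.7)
The plan is to build $\Gamma_n'$ by starting from a fixed representative of the isotopy class $[\Gamma]$ and grafting onto it a ``lollipop'' for each component of $\Gamma_i$, then weighting the grafted pieces by a small density so that the total measure concentrates on $\Gamma_i$ in the limit.

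Recall from Section~\ref{sec:intro_hom} that $\Gamma_i$ is a disjoint union of small geodesic circles $C_1,\ldots,C_N$, each bounding a small disk in $(M,g)$. I would first fix a smooth embedded representative $\gamma_0$ of $[\Gamma]$ disjoint from $\Gamma_i$, and for each $j$ choose a smooth injective path $\alpha_j$ from a point of $\gamma_0$ to a point of $C_j$, with interiors pairwise disjoint and disjoint from $\gamma_0\cup\Gamma_i$. The curve $\Gamma_n'$ is then constructed by the standard connect-sum modification: remove a tiny arc of $\gamma_0$ near the base of $\alpha_j$, remove a tiny arc of $C_j$ near its tip, and reconnect the four resulting endpoints using two parallel copies of $\alpha_j$ separated by distance $1/n$. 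Since each $C_j$ bounds a disk in $M$, the loop swept out by each lollipop is null-homotopic in $M$ relative to its attachment; hence $\Gamma_n'$ remains in the isotopy class of $\Gamma$.

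For the density, I would set $\rho_n\equiv 1$ on the portion of $\Gamma_n'$ coinciding with $\Gamma_i$ outside the deleted arcs, and $\rho_n\equiv \varepsilon_n$ (e.g.\ $\varepsilon_n = 1/n$) on $\gamma_0$ together with the doubled tentacles, smoothing across interfaces to obtain a positive $L^\infty$ function. Denoting by $\Sigma_n\subset \Gamma_n'$ the union of $\gamma_0$ and the tentacles, and by $A_n$ the union of the deleted arcs, for every $f\in W^{1,p}(M)$ I would estimate
\[
\left|\int_{\Gamma_n'}\rho_n f\,ds_g - \int_{\Gamma_i}f\,ds_g\right|\leq \varepsilon_n\int_{\Sigma_n}|f|\,ds_g + \int_{A_n}|f|\,ds_g.
\]
The standard trace inequality for $p>1$ bounds $\|f\|_{L^1(\Sigma)}$ by $C(\Sigma)\|f\|_{W^{1,p}(M)}$ on any fixed Lipschitz curve $\Sigma\subset M$. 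Since the total length of $\Sigma_n$ is bounded uniformly in $n$ (only the separation $1/n$ between parallel paths changes), the first term is $O(\varepsilon_n)\|f\|_{W^{1,p}}$. Choosing the deleted arcs so that $\mathrm{length}(A_n)\leq 1/n$ then bounds the second term by $O(1/n)\|f\|_{W^{1,p}}$. Both estimates are uniform on the unit ball of $W^{1,p}(M)$, which yields the required convergence.

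The principal obstacle is topological rather than analytic: ensuring that the grafting procedure preserves the isotopy class of $\Gamma$. This is precisely where the specific structure of the Girouard--Lagac\'e sequence enters, since the disk-bounding property of each $C_j$ allows the tentacles to be attached as null-homotopic loops. Without this contractibility of the components of $\Gamma_i$, approximating its length measure by curves in an arbitrary prescribed isotopy class would not be possible in general.
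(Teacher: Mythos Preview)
Your approach is correct and essentially the same as the paper's: both constructions perform a band-sum surgery joining a representative of $[\Gamma]$ to the contractible circles of $\Gamma_i$ via thin tubes carrying small density, and both rely on the same two analytic facts (the paper's Lemmas~\ref{prop:hom_2} and~\ref{prop:hom_1}) that a fixed-length curve with vanishing density and a shrinking arc both tend to $0$ in $(W^{1,p})^*$. The paper organises the surgery slightly differently---first merging all of $\Gamma_i$ into a single contractible curve and only then attaching $\Gamma$, and handling separately the case where $\Gamma$ cannot be made disjoint from that curve---but the underlying ideas are identical.
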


This proposition is proved using the following auxiliary lemmas.

\begin{lemma}
\label{prop:hom_2}
 Let $\gamma\subset M$ be a smooth curve on the surface $(M,g)$ and let $ds_g^\gamma$ be the length measure on it. Then $\eps ds_g^\gamma\to 0$ in $(W^{1,p}(M,g))^*$ for any $p>1$.
\end{lemma}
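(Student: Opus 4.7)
The plan is to reduce the claim to the continuity of the trace map on the fixed smooth curve $\gamma$. By the definition of the dual norm,
\[
\|\eps\, ds_g^\gamma\|_{(W^{1,p}(M,g))^*} = \eps\,\sup_{\|u\|_{W^{1,p}}\leq 1}\left|\int_\gamma u\, ds_g\right|,
\]
so it suffices to bound the supremum on the right by a constant independent of $\eps$; multiplying by $\eps\to 0$ will then immediately yield the result.

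To obtain such a bound, I would invoke the trace theorem for $W^{1,p}$ on a smooth closed surface along a smooth hypersurface. For every $p>1$, the restriction map extends continuously to a bounded operator $W^{1,p}(M,g)\to L^p(\gamma)$; this is standard and follows from the Euclidean trace theorem applied in a finite cover of $\gamma$ by Fermi coordinate charts together with a partition of unity. Combining this with H\"older's inequality on the compact curve $\gamma$ produces a constant $C=C(M,g,\gamma,p)>0$ such that
\[
\left|\int_\gamma u\, ds_g\right| \leq \length_g(\gamma)^{1-1/p}\, \|u\|_{L^p(\gamma)} \leq C\,\|u\|_{W^{1,p}(M,g)}
\]
for every $u\in W^{1,p}(M,g)$.

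Taking the supremum over the unit ball of $W^{1,p}(M,g)$ yields $\|ds_g^\gamma\|_{(W^{1,p})^*}\leq C$, and therefore $\|\eps\, ds_g^\gamma\|_{(W^{1,p})^*}\leq C\eps\to 0$ as $\eps\to 0$. There is no genuine obstacle in the argument: the only point to check is the applicability of the trace inequality in the stated form, which is automatic because $M$ is a smooth closed surface and $\gamma$ is a smooth embedded one-dimensional submanifold. The hypothesis $p>1$ enters only through the trace theorem and could in fact be relaxed to $p\geq 1$; for the intended application in Proposition~\ref{prop:approx_curve} any single value $p>1$ suffices.
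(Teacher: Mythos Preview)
Your argument is correct: the trace inequality $\|u\|_{L^p(\gamma)}\le C\|u\|_{W^{1,p}(M)}$ immediately gives that $ds_g^\gamma$ is a bounded functional on $W^{1,p}(M)$, and hence $\eps\,ds_g^\gamma\to 0$ in the dual. This is certainly the shortest route to the lemma as stated.

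The paper takes a different, more quantitative path. It compares $\int_\gamma u\,ds_g$ with the average $\frac{1}{2\delta}\int_{N_\delta(\gamma)} u\,dv_g$ over the $\delta$-tubular neighbourhood, via the pointwise estimate~\eqref{eq:hom}, and then uses H\"older on the small set $N_\delta(\gamma)$ to produce the bound
\[
\left|\eps\int_\gamma u\,ds_g\right|\le C\bigl(\eps\delta^{-1/p}+\eps\delta^{1/q}\bigr)\|u\|_{W^{1,p}(M)},
\]
choosing $\delta=\eps$ at the end. The reason for this more elaborate argument is not Lemma~\ref{prop:hom_2} itself but its companion Lemma~\ref{prop:hom_1}, where the curve $\gamma_\eps=x(-\eps,\eps)$ shrinks with $\eps$. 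There the naive trace bound only gives $\|ds_g^{\gamma_\eps}\|_{(W^{1,p})^*}\le C$ with a constant that does not see the shortness of $\gamma_\eps$, which is not enough; the tubular-neighbourhood comparison, by contrast, tracks the area of $N_{\delta}(\gamma_\eps)\sim\eps\delta$ explicitly and yields the required decay. So your proof is cleaner for this lemma in isolation, while the paper's approach is chosen because the same estimate~\eqref{eq:hom} drives both lemmas uniformly.
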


\begin{lemma}
\label{prop:hom_1}
    Let $x\colon[-1,1]\to M$ be a smooth embedded curve on the surface $(M,g)$ and let $\mu_\eps  = ds_g|_{x(-\eps,\eps)}$ be the  length measure on $x(-\eps,\eps)$. Then $\mu_\eps\to 0$ in $(W^{1,p}(M,g))^*$ for any $p>1$.
\end{lemma}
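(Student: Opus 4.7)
The plan is to show that the dual norm $\|\mu_\eps\|_{(W^{1,p}(M,g))^*}$ decays as a positive power of $\eps$. Since $C^\infty(M)$ is dense in $W^{1,p}(M,g)$, it suffices to produce, for every smooth $f$, a bound of the form
\[
\left|\int_M f\,d\mu_\eps\right|\leq C\eps^{1/p'}\|f\|_{W^{1,p}(M,g)},
\]
where $p'=p/(p-1)$ and $C$ depends only on $(M,g)$ and on the fixed smooth curve $x([-1,1])$ but not on $\eps$.

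To get such a bound, I would first parametrise and apply H\"older's inequality:
\[
\left|\int_M f\,d\mu_\eps\right|=\left|\int_{-\eps}^{\eps}f(x(t))|x'(t)|_g\,dt\right|\leq \|f\|_{L^p(x(-\eps,\eps),ds_g)}\cdot \mathrm{length}_g(x(-\eps,\eps))^{1/p'}.
\]
Since $x$ is smooth and $[-1,1]$ is compact, $|x'|_g$ is bounded, and the length factor is at most $C_1\eps^{1/p'}$. For the first factor I would use that $x(-\eps,\eps)\subset x([-1,1])$, so
\[
\|f\|_{L^p(x(-\eps,\eps),ds_g)}\leq \|f\|_{L^p(x([-1,1]),ds_g)}.
\]
The curve $x([-1,1])$ is a fixed smooth compact submanifold of $M$, so by the standard trace theorem for Sobolev functions on the surface $(M,g)$ there is a constant $C_2=C_2(M,g,x)$ with
\[
\|f\|_{L^p(x([-1,1]),ds_g)}\leq C_2\|f\|_{W^{1,p}(M,g)}.
\]
Combining everything gives $\|\mu_\eps\|_{(W^{1,p})^*}\leq C_1 C_2\,\eps^{1/p'}\to 0$ as $\eps\to 0$, which is the desired conclusion.

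The only point requiring any care is the uniformity of the trace constant as the arc $x(-\eps,\eps)$ shrinks; this is precisely why I extend to the fixed reference curve $x([-1,1])$ before invoking the trace inequality, which makes the constant independent of $\eps$. All other ingredients are elementary, and one sees that the argument uses $p>1$ only through the finiteness of $p'$, so the exponent $1/p'>0$ produces the required decay.
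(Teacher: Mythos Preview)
Your argument is correct. Reducing to the fixed reference curve $x([-1,1])$ before invoking the trace inequality is exactly the right move to make the constant independent of $\eps$, and the trace $W^{1,p}(M)\to L^p(x([-1,1]),ds_g)$ is indeed bounded for $p>1$ since $x([-1,1])$ is a fixed compact, smoothly embedded codimension-one submanifold (with boundary) of the surface.

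The paper takes a different, more self-contained route: it compares the line integral over $\gamma_\eps=x(-\eps,\eps)$ with the average over a thin tubular neighbourhood $N_\delta(\gamma_\eps)$ via the elementary inequality~\eqref{eq:hom}, then controls the remaining terms by H\"older using $\area(N_\delta(\gamma_\eps))\le C\eps\delta$, and finally optimises $\delta=\eps^{p/(2q)}$. This avoids quoting the trace theorem as a black box and reuses the same tool (Lemma~3.3) that drives the companion Lemma~\ref{prop:hom_2}, giving a uniform treatment of both lemmas. Your approach is shorter and even yields the sharper decay rate $\eps^{1/p'}$ versus the paper's $\eps^{1/(2p')}$; the trade-off is that it imports the trace theorem rather than deriving everything from first principles.
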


The proofs of both propositions are based on the computation made in~\cite[Section 5.2]{GKL}. Namely, for a curve $\gamma\subset M$ let $N_\delta(\gamma)$ be $\delta$-tubular neighbourhood of $\gamma$. For sufficiently small $\delta>0$, one has $\area (N_\delta(\gamma))\leq C\delta\length(\gamma)$. The following lemma is proved as part of the proof of~\cite[Theorem 5.2]{GKL}, in particular cf.~\cite[inequality~ (5.1)]{GKL} with $\rho \equiv 1$.
\begin{lemma}
    For any $u\in C^\infty(M)$ and $\delta<\delta_0$ one has
    \begin{equation}
    \label{eq:hom}
        \left|\int_\gamma u\,ds_g - \frac{1}{2\delta}\int_{N_\delta(\gamma)}u\,dv_g\right|\leq C \|u\|_{W^{1,1}(N_\delta(\gamma))},
    \end{equation}
    where $\delta_0, C$ depend on $(M,g)$ and the upper bound on the curvature of $\gamma$.
\end{lemma}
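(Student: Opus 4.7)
The plan is to reduce the inequality to a one-dimensional Taylor-type estimate by parametrising $N_\delta(\gamma)$ via Fermi coordinates around $\gamma$. Let $L$ denote the length of $\gamma$, parametrised by arclength $s\in[0,L]$, and let $\bn(s)$ be a unit normal along $\gamma$. For $\delta_0$ smaller than the normal injectivity radius of $\gamma$ — a quantity controlled from below by the injectivity radius of $(M,g)$ and an upper bound on the geodesic curvature $\kappa_g$ of $\gamma$ — the map $\Phi(s,t):=\exp_{\gamma(s)}(t\bn(s))$ is a diffeomorphism onto $N_{\delta_0}(\gamma)$. In these coordinates the volume form reads $dv_g = J(s,t)\,ds\,dt$, where $J$ is smooth, $J(s,0)\equiv 1$, $\partial_t J(s,0)=-\kappa_g(s)$, and therefore
\[
|J(s,t)-1|\leq C_1|t|,\qquad \tfrac{1}{2}\leq J(s,t)\leq 2,
\]
uniformly for $|t|\leq\delta_0$, with $C_1$ depending only on $(M,g)$ and on the curvature bound of $\gamma$.

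Writing $v(s,t):=u(\Phi(s,t))$, I decompose the integrand of the averaging difference as
\[
v(s,0)-\frac{1}{2\delta}\int_{-\delta}^{\delta} v(s,t)J(s,t)\,dt
=\frac{1}{2\delta}\int_{-\delta}^{\delta}\bigl(v(s,0)-v(s,t)\bigr)J(s,t)\,dt
+\frac{v(s,0)}{2\delta}\int_{-\delta}^{\delta}\bigl(1-J(s,t)\bigr)\,dt.
\]
The first term is handled by the identity $v(s,0)-v(s,t)=-\int_0^{t}\partial_\tau v(s,\tau)\,d\tau$ together with $J\leq 2$; after Fubini, its absolute value is bounded by $2\int_{-\delta}^{\delta}|\partial_t v(s,t)|\,dt$. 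For the second term I use $|J(s,t)-1|\leq C_1|t|$ combined with the pointwise bound
\[
|v(s,0)|\leq \frac{1}{2\delta}\int_{-\delta}^{\delta}|v(s,t)|\,dt+\int_{-\delta}^{\delta}|\partial_t v(s,t)|\,dt,
\]
which itself follows from the fundamental theorem of calculus applied in the $t$-direction. This gives a bound on the second term by $C_1\bigl(\tfrac{1}{2}\int_{-\delta}^{\delta}|v(s,t)|\,dt+\delta\int_{-\delta}^{\delta}|\partial_t v(s,t)|\,dt\bigr)$.

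Adding the two estimates, integrating over $s\in[0,L]$, and using that $J$ is uniformly comparable to $1$ to replace the Euclidean $ds\,dt$ integrals over $[0,L]\times(-\delta,\delta)$ by $dv_g$-integrals over $N_\delta(\gamma)$, I arrive at
\[
\left|\int_\gamma u\,ds_g-\frac{1}{2\delta}\int_{N_\delta(\gamma)}u\,dv_g\right|
\leq C\bigl(\|u\|_{L^1(N_\delta(\gamma))}+\|\partial_t u\|_{L^1(N_\delta(\gamma))}\bigr)
\leq C\|u\|_{W^{1,1}(N_\delta(\gamma))},
\]
where I use $|\partial_t u|\leq|\nabla u|_g$ in the last step, and $C$ depends only on $(M,g)$ and on the curvature bound for $\gamma$. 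Both sides are continuous in $u$ under $W^{1,1}$ approximation, so the smoothness hypothesis on $u$ is not essential. The only real technical point is keeping the Jacobian expansion $|J(s,t)-1|\leq C_1|t|$ uniform in $s$; this is a standard Jacobi-field computation, but the constant must be tracked carefully so that the final $C$ depends only on the data appearing in the statement.
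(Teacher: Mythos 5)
Your proof is correct. The paper does not actually prove this lemma --- it only cites \cite[proof of Theorem 5.2, inequality (5.1)]{GKL} --- and your argument (Fermi coordinates along $\gamma$, the Jacobian expansion $|J(s,t)-1|\leq C_1|t|$, and the fundamental theorem of calculus in the normal direction, followed by Fubini) is precisely the standard argument underlying that citation, so it supplies a self-contained proof where the paper has none. The only point worth flagging is that your parametrisation covers $N_\delta(\gamma)$ exactly when the tubular neighbourhood is understood as the image of the normal bundle (which is how the paper uses it, e.g.\ in the estimate $\area(N_{\delta,\eps})\leq C\eps\delta$ for a curve with endpoints); if one instead took the metric $\delta$-neighbourhood of a non-closed arc, the end caps would require a separate, though routine, estimate.
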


\begin{proof}[Proof of Lemma~\ref{prop:hom_2}]
    Set $N_\delta := N_\delta(\gamma)$. First note that for any $p>1$ one has by the H\"older inequality
    \[
     \|u\|_{W^{1,1}(N_\delta)}\leq \|u\|_{W^{1,p}(N_\delta)}\area(N_\delta)^{\frac{1}{q}}\leq C\delta^\frac{1}{q}\|u\|_{W^{1,p}(M)}. 
    \]
    Similarly,
    \[
    \left|\frac{\eps}{2\delta}\int_{N_\delta}u\,dv_g\right|\leq \frac{\eps}{2\delta}\|u\|_{L^p(M)}\area(N_\delta)^{\frac{1}{q}}\leq C\eps\delta^{-\frac{1}{p}}\|u\|_{L^p(M)}.
    \]
    As a result, multiplying~\eqref{eq:hom} by $\eps$ and combining with previous inequalities yields
    \[
    \left|\int_\gamma u\,\eps ds_g\right|\leq  C\left(\eps\delta^{-\frac{1}{p}} +\eps\delta^\frac{1}{q}\right)\|u\|_{W^{1,p}(M)}.
    \]
    Setting $\delta = \eps$ completes the proof.
\end{proof}

\begin{proof}[Proof of Lemma~\ref{prop:hom_1}]
The proof is similar to the previous one. Let $\gamma_\eps:= x(-\eps,\eps)\subset x(-1,1)$, $N_{\delta,\eps}:=N_\delta(\gamma_\eps)$. Then one has $\area(N_{\delta,\eps})\leq C\eps\delta$ and 
    \[
     \|u\|_{W^{1,1}(N_{\delta,\eps})}\leq \|u\|_{W^{1,p}(N_{\delta,\eps})}\area(N_{\delta,\eps})^{\frac{1}{q}}\leq C(\eps\delta)^\frac{1}{q}\|u\|_{W^{1,p}(M)}. 
    \]
Similarly, 
    \[
    \left|\frac{1}{2\delta}\int_{N_{\delta,\eps}}u\,dv_g\right|\leq \frac{1}{2\delta}\|u\|_{L^p(M)}\area(N_{\delta,\eps})^{\frac{1}{q}}\leq C\eps^{\frac{1}{q}}\delta^{-\frac{1}{p}}\|u\|_{L^p(M)}
    \]
Note that the constant in \eqref{eq:hom} is independent of $\varepsilon$. Therefore, combining~\eqref{eq:hom} with previous inequalities yields
\[
\left|\int_{\gamma_\eps} u\, ds_g\right|\leq  C\left(\eps^{\frac{1}{q}}\delta^{-\frac{1}{p}} +(\eps\delta)^\frac{1}{q}\right)\|u\|_{W^{1,p}(M)}
\]
Setting $\delta = \eps^{\frac{p}{2q}}$ completes the proof
\end{proof}

Now we turn to the proof of Proposition~\ref{prop:approx_curve}. We start with the case $\Gamma$ -- connected contractible curve. Let $\gamma_1$ and $\gamma_2$ be two connected components of $\Gamma_i$. Since both of them are small circles and, hence, contractible, there is a curve $\gamma_0$ connecting two points $p_j\in\gamma_j$, $j=1,2$ and not intersecting $\Gamma_i$ in any other points. Let $\eps>0$ small enough so that  the  $\eps$-neighbourhood of $\gamma_0$ is diffeomorphic to a rectangle $(\eps,\eps)\times[-1,1]$ with coordinates $(s,t)$, where $\gamma_0$ corresponds to $s=0$ and $t=(-1)^i$ are two segments $I_{j,\eps}\subset \gamma_j$. Let $\gamma_{0,\eps}$ be the two curves corresponding to $s=\pm\eps$. We then define $\Gamma'$ as
\[
\Gamma':=\Gamma_{ i}\cup\gamma_{0,\eps}\setminus (I_{1,\eps}\cup I_{2,\eps})
\]
and set $\rho_\eps$ to be equal to $1$ on $\Gamma'\cap \Gamma_{i}$ and to $\delta(\eps)$ on $\gamma_{0,\eps}$. Then, one has
\[
\rho_\eps\,ds_g^{\Gamma'} - ds_g^{\Gamma_i} = \delta(\eps)\,ds_g^{\gamma_{0,\eps}} - ds_g^{I_{1,\eps}\cup I_{2,\eps}}
\]
By Lemma~\ref{prop:hom_2} there exists $\delta(\eps)$ such that $(W^{1,p})^*$-norm of the first term on the r.h.s. is smaller than $\eps$, whereas by the second term goes to zero in $(W^{1,p})^*$ by Lemma~\ref{prop:hom_1}. We now note that the resulting curve $\Gamma'$ has one fewer boundary components each of which is still contractible. 
We refer to this procedure as surgery as it is a reminiscent of the notion of surgery used in topology of manifolds. In our context, surgery allows one to decrease the number of connected components while preserving the contractability of each component. More generally, attaching a contractible curve to any other curve using surgery does not change the isotopy class of that curve. To finish the proof of contractible case, simply repeat the surgery procedure until the curve becomes connected.  

Suppose now that $\Gamma$ is arbitrary. Let $\tilde\Gamma$ be the contractible curve and $0<\rho\in L^\infty(\tilde \Gamma)$ be such that 
\[
\|\rho ds_g^{\tilde\Gamma} - ds_g^{\Gamma_i}\|_{(W^{1,p}(M))^*}\leq\eps
\]
Let $\delta$ be small enough so that
\[
\|\delta ds_g^\Gamma\|_{(W^{1,p}(M))^*}\leq\eps
\]
If $\Gamma$ is disjoint from $\tilde\Gamma$, then we can perform a surgery procedure on the measure $\delta ds_g^\Gamma + \rho ds_g^{\tilde\Gamma}$ to obtain a measure supported on the curve isotopic to $\Gamma$ and $3\eps$-close to $ ds_g^{\Gamma_i}$ in $(W^{1,p}(M))^*$-norm, which completes the proof of Proposition~\ref{prop:approx_curve}.

Suppose that $\Gamma$ is not disjoint from $\tilde\Gamma$. The idea then is to replace $\Gamma$ by an isotopic curve that does not intersect $\tilde\Gamma$ and repeat the argument for that curve. Let us show that such isotopic curve exists. Since $\tilde\Gamma$ is connected and contractible, $M\setminus\tilde\Gamma$ has a connected component $D$ diffeomorphic to a disc. Let $\delta>0$ be such that the $\delta$-neighbourhood $U$ of $D$ is still a disc and fix $p\in D$ together with its neighbourhood $p\in V\subset D$. By partition of unity, there exists a diffeomorphism $F$ isotopic to $\mathrm{id}$ of $M$ such that $F=\mathrm{id}$ on $M\setminus U$ and $\bar D\subset F(V)$. Choose $p,V$ such that $V\cap \Gamma = \varnothing$. Then $F(\Gamma)$ is a curve isotopic to $\Gamma$ that is disjoint from $F(V)\supset \bar D$. The proof is complete.

\section{Criticality conditions}\label{sec:Crit_conditions}

 In this section we adapt the definition of critical metrics originated in~\cite{NadirashviliT2} and~\cite{ESIextremal}. Throughout this section, $M$ is a closed surface and $\Gamma$ is a closed curve inside $M$. Our proofs closely follow the approach in~\cite{FS}.

\subsection{Derivative of the eigenvalue}
 Let $l$ be such that $\tau_l(M,\Gamma,g_0)$ (which we will simply denote by $\tau_l(g_0)$) satisfies 
\[
\tau_l(g_0) = \tau_k(M,\Gamma,g_0),
\]
and assume that $\tau_{l-1}(g_0) < \tau_l(g_0)$. Consider a smooth family of metrics $g(t)$ on $M$ such that $g(0) = g_0$ and 
\[
\frac{d}{dt} g(t) = h(t),
\]
where $h(t)$ is a smooth family of symmetric $(0,2)$-tensor fields on $M$. Introduce the following notations
\[
dv_t := dv_{g_t} := dv_{g(t)}, \quad ds_t := ds_{g_t} := ds_{g(t)}, \quad h_t := h(t), \quad \nabla^t := \nabla_{g(t)}
\]
where $\nabla^t$ denotes the gradient operator associated with $g(t)$. 

Let $E_i(g(t)) \subset W^{1,2}(M)$ be the eigenspace associated with the eigenvalue $\tau_i(g(t))$ of $(M,\Gamma,g(t))$ 
\[
\displaystyle 
E_i(g(t)):=\left\{u:\int_{M}\nabla^t u\cdot \nabla^t vdv_{g_t}=\tau_i(t)\int_{\Gamma}uv ds_{g_t}, \forall v\in W^{1,2}(M) \right\}
\]

It is a finite-dimensional vector space (see \cite{GKL}). Define the quadratic form $Q_h$ on $C^\infty(M)$ by:
\[
Q^t_h(u) = -\int_{M} \left\langle \tau(u), h_t \right\rangle dv_{g_t} - \frac{\tau_l(t)}{2} \int_{\Gamma} u^2 h_t(T,T) \, ds_t.
\]
Here, $\tau(u) = du \otimes du - \frac{1}{2} |\nabla^t u|^2 g_t$ is the stress-energy tensor of $u$ with respect to the metric $g(t)$, and $T$ is a unit tangent vector field along $\Gamma$.

\begin{lemma}\label{lem:Derivative}

If $\tau_{l-1}(g_0) < \tau_l(g_0),$ then $\tau_l(t)$ is locally Lipschitz in $t$. Furthermore, if $\dot{\tau_l}(t_0)$ exists, then
\[
\dot{\tau_l}(t_0) = Q^{t_0}_{h_0}(u)
\]
for any $u \in E_l(g(t_0))$ satisfying $||u||_{L^2(\Gamma, g(t_0))} = 1$.

\end{lemma}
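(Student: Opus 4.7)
The plan is to establish Lipschitz continuity by a standard min-max comparison, and then derive the derivative formula by comparing $\tau_l(t)$ with the top eigenvalue of a carefully chosen finite-dimensional matrix pencil that is smooth through $t=0$; without loss of generality one may take $t_0 = 0$ by time translation. For Lipschitz continuity, the min-max characterization
\[
\tau_l(t) = \min_{\dim F = l+1}\ \sup_{v \in F \setminus \{0\}} \frac{\int_M |\nabla^t v|_t^2\,dv_t}{\int_\Gamma v^2\,ds_t}
\]
together with the pointwise smoothness in $t$ of $g^{ij}(t)$, $\sqrt{\det g_t}$ and $\sqrt{g_t(T,T)}$ gives $|R_t(v) - R_0(v)| \le C|t|R_0(v)$ uniformly in $v$ on compact intervals of $t$. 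Inserting a subspace optimal at $t=0$ (resp.\ at $t$) yields $|\tau_l(t) - \tau_l(0)| \le C|t|\tau_l(0)$, the desired Lipschitz bound.

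For the derivative formula, fix a normalized $u \in E_l(g_0)$ and let $\phi_0, \ldots, \phi_{l-1}$ be an $L^2(\Gamma, g_0)$-orthonormal family of eigenfunctions for the eigenvalues $\tau_0(g_0), \ldots, \tau_{l-1}(g_0)$ counted with multiplicity. Set $F_u := \mathrm{span}(\phi_0, \ldots, \phi_{l-1}, u) \subset W^{1,2}(M)$, a fixed $(l+1)$-dimensional test subspace; by min-max $\tau_l(t) \le \lambda_u(t) := \sup_{v \in F_u \setminus \{0\}} R_t(v)$ for all $t$. The restriction of the Rayleigh quotient to $F_u$ is governed by the generalized eigenvalue problem $A(t)v = \lambda B(t)v$, where $A(t)_{ij} := \int_M \nabla^t \psi_i \cdot \nabla^t \psi_j\,dv_t$ and $B(t)_{ij} := \int_\Gamma \psi_i\psi_j\,ds_t$, with $(\psi_0, \ldots, \psi_l) := (\phi_0, \ldots, \phi_{l-1}, u)$. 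Both matrices depend smoothly on $t$, with $A(0) = \mathrm{diag}(\tau_0(0), \ldots, \tau_{l-1}(0), \tau_l(0))$ and $B(0) = I$. Because $\tau_{l-1}(g_0) < \tau_l(g_0)$, the top eigenvalue $\tau_l(0)$ is simple in this pencil, so the implicit function theorem makes $\lambda_u(t)$ smooth near $0$ and the standard first-order perturbation formula gives $\dot\lambda_u(0) = \langle \dot A(0) u, u\rangle - \tau_l(0)\langle \dot B(0) u, u\rangle$. Using $\partial_t g^{ij}|_0 = -h_0^{ij}$, $\partial_t dv_t|_0 = \tfrac{1}{2}\mathrm{tr}_{g_0}(h_0)\,dv_0$ and $\partial_t ds_t|_0 = \tfrac{1}{2}h_0(T,T)\,ds_0$, a direct computation yields
\[
\dot\lambda_u(0) = -\int_M \langle \tau(u), h_0\rangle\,dv_0 - \frac{\tau_l(0)}{2}\int_\Gamma u^2 h_0(T,T)\,ds_0 = Q^0_{h_0}(u).
\]

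From $\tau_l(t) \le \lambda_u(t)$ with $\tau_l(0) = \lambda_u(0)$ one obtains $\tau_l(t) - \tau_l(0) \le t\, Q^0_{h_0}(u) + o(t)$ as $t \to 0$; dividing by $t > 0$ gives $\limsup_{t \to 0^+}(\tau_l(t) - \tau_l(0))/t \le Q^0_{h_0}(u)$, and dividing by $t < 0$ gives $\liminf_{t \to 0^-}(\tau_l(t) - \tau_l(0))/t \ge Q^0_{h_0}(u)$; both hold for every normalized $u \in E_l(g_0)$. If $\dot\tau_l(0)$ exists, both one-sided limits equal this common value, and sandwiching forces $Q^0_{h_0}(u) = \dot\tau_l(0)$ for every such $u$. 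The main technical obstacle is precisely this upper-bound step: the hypothesis $\tau_{l-1}(g_0) < \tau_l(g_0)$ is essential to keep the top eigenvalue of the matrix pencil on $F_u$ simple at $t=0$, thereby allowing smooth perturbation and avoiding the non-smooth branch crossings that would arise if $\tau_l(0)$ touched a lower eigenvalue of the pencil.
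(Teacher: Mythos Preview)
Your proof is correct and takes a genuinely different route from the paper's. For the Lipschitz part, the paper fixes a $\tau_l(t_1)$-eigenfunction $u$ and uses the single test function $u-P_{t_2}(u)$, where $P_t$ is the $L^2(\Gamma,g(t))$-projection onto $\bigoplus_{i<l}E_i(g(t))$; the gap hypothesis is used there to keep $\dim E(t)$ constant so that $P_t$ varies smoothly. Your direct min-max comparison on the whole subspace avoids $P_t$ entirely and in fact does not need the gap for this step. For the derivative formula, the paper differentiates the function $F(t)=\int_M|\nabla^t u(t)|^2\,dv_t-\tau_l(t)\int_\Gamma u(t)^2\,ds_t$ with $u(t)=u_0-P_t(u_0)$: since $F\ge0$ and $F(t_0)=0$, one gets $\dot F(t_0)=0$, and expanding yields the formula. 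Your approach freezes the subspace $F_u$, reduces to a smooth finite-dimensional pencil, and sandwiches $\dot\tau_l$ between the two one-sided limits of $\lambda_u$. The advantage of your method is that it bypasses the smoothness of $P_t$ (which the paper uses but does not justify in detail) and makes the role of the gap transparent: it is precisely what guarantees simplicity of the top pencil eigenvalue. The paper's approach, on the other hand, yields the formula by a single differentiation without the sandwiching step, and is perhaps more readily adapted to non-two-sided variations. Both are standard in the eigenvalue-perturbation literature.
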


\begin{proof} Let $E(t) = \bigoplus_{i=0}^{l-1} E_i(g(t)) \subset L^2(\Gamma, g(t))$. 
Since $\tau_{l-1}(g_0) < \tau_l(g_0)$, it follows that $\tau_{l-1}(g(t)) < \tau_l(g(t))$ for small $t$, ensuring that $E(t)$ has a constant dimension $l$ for sufficiently small $t$. 

Let $P_t: L^2(\Gamma, g(t)) \to E(t)$ be the orthogonal projection onto $E(t)$. To establish the local Lipschitz continuity of $\tau_l(t)$, let $t_1 \neq t_2$ and assume without loss of generality that $\tau_l(t_1) \leq \tau_l(t_2)$. 

Now, let $u$ be an $l$-th eigenfunction of $g(t_1)$ with $\int_{\Gamma} u^2 ds_{t_1} = 1$. Recalling the local expression of the gradient in a coordinate chart:
\[
\nabla_g u = g^{ij} \frac{\partial u}{\partial x_i} \frac{\partial u}{\partial x_j},
\]
it follows from the smoothness of $(g(t))_t$ that
\[
\left| \int_M |\nabla^{t_1} u|^2 dv_{t_1} - \int_M |\nabla^{t_2} u|^2 dv_{t_2} \right| \leq C |t_1 - t_2|
\]
and
\[
\left| \int_{\Gamma} u^2 ds_{t_1} - \int_{\Gamma} u^2 ds_{t_2} \right| \leq C |t_1 - t_2|.
\]
Moreover, we obtain
\[
||P_{t_2}(u)||_{C^1(M \setminus \Gamma)} = ||P_{t_1}(u) - P_{t_2}(u)||_{C^1(M \setminus \Gamma)} \leq C |t_1 - t_2|.
\]
Thus, we deduce
\[
|\tau_l(t_1) - \tau_l(t_2)| = \tau_l(t_2) - \tau_l(t_1) \leq 
\frac{\int_M |\nabla^{t_2} (u - P_{t_2}(u))|^2 dv_{t_2}}{\int_{\Gamma} (u - P_{t_2}(u))^2 ds_{t_2}} 
- \int_M |\nabla^{t_1} u|^2 dv_{t_1}.
\]
Rewriting the second term, we obtain
\[
|\tau_l(t_1) - \tau_l(t_2)| \leq 
\frac{\int_M |\nabla^{t_2} (u - P_{t_2}(u))|^2 dv_{t_2}}{\int_{\Gamma} (u - P_{t_2}(u))^2 ds_{t_2}} 
- \frac{\int_M |\nabla^{t_1} (u - P_{t_1}(u))|^2 dv_{t_1}}{\int_{\Gamma} (u - P_{t_1}(u))^2 ds_{t_1}}.
\]
Finally, we conclude
\[
|\tau_l(t_1) - \tau_l(t_2)| \leq C |t_1 - t_2|.
\]
Thus, $\tau_l(t)$ is locally Lipschitz, completing the first part of the lemma.

Now, choose $u_0 \in E_l(g(t_0))$ and set $u(t) = u_0 - P_t(u_0)$. Define
\[
F(t) = \int_M |\nabla u(t)|^2 dv_t - \tau_l(t) \int_{\Gamma} u(t)^2 ds_t.
\]
Since $F(t) \geq 0$ for all $t$ and $F(t_0) = 0$, it follows that $\dot{F}(t_0) = 0$. Hence, we obtain
\[
\int_M \left[ 2 \left\langle \nabla u_0, \nabla \dot{u_0} \right\rangle - \left\langle du_0 \otimes du_0 - \frac{1}{2} |\nabla u_0|^2 g_0, h_0 \right\rangle \right] dv_{t_0}
\]
\[
= \dot{\tau_l}(t_0) \int_{\Gamma} u_0^2 ds_{t_0} + \tau_l(t_0) \int_{\Gamma} \left(2 u_0 \dot{u_0} + \frac{1}{2} u_0^2 h(T,T) \right) ds_{t_0}.
\]

The computation of the left-hand side can be found in several sources, including \cite{NadirashviliT2, Berger, ESIextremal}. The computation of the right-hand side follows from
\[
\frac{d}{dt} \left( \tau_l(t) \int_{\Gamma} u(t)^2 ds_t \right) = \dot{\tau_l}(t) \int_{\Gamma} u(t)^2 ds_t + \tau_l(t) \left( \int_{\Gamma} 2 u \dot{u} ds_t + \int_{\Gamma} u(t)^2 \frac{d}{dt} (ds_t) \right).
\]
Let $\gamma\colon [0,1] \to M$ be a parametrization of $\Gamma$. Then
\[
\frac{d}{dt} (ds_t) = \frac{d}{dt} \big( |\dot{\gamma}(s)|_{g(t)} \big) ds 
= \frac{d}{dt} \left( \sqrt{g(t)(\dot{\gamma}(s), \dot{\gamma}(s))} \right) ds.
\]
Expanding the derivative, we obtain:
\[
\frac{d}{dt} (ds_t) 
= \frac{1}{2} h(t) \big( \dot{\gamma}(s), \dot{\gamma}(s) \big) |\dot{\gamma}(s)|^{-1} ds.
\]
Rewriting in terms of the unit tangent vector field $T = \frac{\dot{\gamma}(s)}{|\dot{\gamma}(s)|}$ along $\Gamma$, we get
\[
\frac{d}{dt} (ds_t) 
= \frac{1}{2} h(t) \big( T, T \big) |\dot{\gamma}(s)| ds= \frac{1}{2} h(t)(T,T) ds_t.
\]
By taking a partition of unity, we extend this result to the entire $\Gamma$. Since $u_0$ is an $l$-th eigenfunction, we also have
\[
\int_M \left\langle \nabla u_0, \nabla \dot{u_0} \right\rangle dv_{t_0} = \tau_l(t_0) \int_{\Gamma} u_0 \dot{u}_0 ds_{t_0}.
\]
Using this and normalizing $u_0$ such that $||u_0||_{L^2} = 1$, we obtain the final formula:
\[
\dot{\tau_l}(t_0) = -\int_M \left\langle du_0 \otimes du_0 - \frac{1}{2} |\nabla u_0|^2 g_0, h_0 \right\rangle dv_{t_0} 
- \frac{\tau_l(t_0)}{2} \int_{\Gamma} u_0^2 h(T,T) ds_{t_0}.
\]
Equivalently, this can be written as
\[
\dot{\tau_l}(t_0) = Q^{t_0}_{h_0}(u_0).
\]
\end{proof}
Now, consider the Hilbert space
\[
\mathcal{H} = L^2(S^2(M \setminus \Gamma), dv_{g_0}) \times L^2(\Gamma, ds_{g_0}),
\]
which consists of pairs of $L^2$ symmetric $(0,2)$-tensor fields on $M \setminus \Gamma$ and $L^2$ functions on $\Gamma$.

Let us first introduce a notation for a unified treatment of Theorems \ref{thm:Crithar}, \ref{thm:CritMin}, and \ref{thm:Crit_S1}. Let $\mathcal{C}$ be a convex cone of metrics, and let $g_0 \in \mathcal{C}$. We define 
$$T\mathcal{C}_{g_0} = \{(h,f) \in \mathcal{H} \mid \exists (g(t))_t \subset \mathcal{C}, \ g(0) = g_0, \ \dot{g}(0) = h, \ \dot{g}(0)(T,T)|_\Gamma = f \}.$$

\begin{lemma}\label{lem:crit_eigenf} 
Let $\mathcal{C}$ be either the full set of metrics, a conformal class, or the family of metrics invariant under the action of a compact Lie group $G$ that preserves $\Gamma$. If $g_0$ is a $\bar{\tau}_k$-$\mathcal{C}$-critical metric, then for any $(\omega,f) \in \mathcal{H}$ satisfying $\displaystyle\int_{\Gamma} f \, ds_{g_0} = 0$ and $(\omega, \omega(T,T)|_\Gamma) \in T\mathcal{C}_{g_0}$, the following holds.

If there exists $\hat{F} \in L^2(S^2(M \setminus \Gamma))$ such that $(\hat{F}, f) \in T\mathcal{C}_{g_0}$, then there exists $u \in E_k(g_0)$ with $\| u \|_{L^2(\Gamma, g_0)} = 1$ such that 
\[
\left\langle (\omega, \frac{\tau_k(g_0)}{2} f), (\tau(u), u^2) \right\rangle_{L^2} = 0.
\]
\end{lemma}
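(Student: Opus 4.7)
The overall plan is to build a one-parameter family of tangent variations $h_\varepsilon\in T\mathcal{C}_{g_0}$ whose traces on $\Gamma$ are always equal to $f$ (so the length derivative vanishes) but whose bulk behaviour interpolates from $\hat F$ near $\Gamma$ to $\omega$ away from it, apply the criticality condition at each $\varepsilon$ to extract an eigenfunction $u_\varepsilon$, and then pass to the limit $\varepsilon\to 0$, using compactness of the finite-dimensional unit sphere in $E_k(g_0)$.

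For Step~1, I fix a smooth cut-off $\chi_\varepsilon\colon M\to[0,1]$ equal to $1$ on an $(\varepsilon/2)$-tubular neighbourhood of $\Gamma$ and supported in the $\varepsilon$-neighbourhood (rotationally invariant in the $\mathbb{G}$-invariant case, taking $\chi_\varepsilon$ as a function of the invariant distance to $\Gamma$), and set
\[
h_\varepsilon := \chi_\varepsilon\,\hat F + (1-\chi_\varepsilon)\,\omega.
\]
In each admissible $\mathcal{C}$, $h_\varepsilon\in T\mathcal{C}_{g_0}$: in the conformal case, writing $\omega = 2\phi_\omega g_0$ and $\hat F = 2\phi_F g_0$, one has $h_\varepsilon = 2(\chi_\varepsilon\phi_F+(1-\chi_\varepsilon)\phi_\omega)g_0$; the invariant case is analogous. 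By construction $h_\varepsilon(T,T)|_\Gamma = \hat F(T,T)|_\Gamma = f$ for all $\varepsilon$, while $h_\varepsilon\to\omega$ in $L^2(S^2(M\setminus\Gamma),dv_{g_0})$ since $h_\varepsilon-\omega=\chi_\varepsilon(\hat F-\omega)$ is supported on a tube shrinking to $\Gamma$.

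For Step~2, the hypothesis $\int_\Gamma f\,ds_{g_0}=0$ gives $\dot L_\varepsilon(0) = \tfrac12\int_\Gamma h_\varepsilon(T,T)\,ds_{g_0}=0$, and since $\bar\tau_k = \tau_k\cdot L$ this collapses the derivative to $\dot{\bar\tau}_k^{\pm}(0) = L(g_0)\,\dot\tau_k^{\pm}(0)$. Refining Lemma~\ref{lem:Derivative} through Kato--Rellich perturbation theory for the analytic branches of the multiple eigenvalue $\tau_k(g_0)$, the one-sided derivatives $\dot\tau_k^{\pm}(0)$ are identified with the extremal eigenvalues of the quadratic form $Q^0_{h_\varepsilon}$ acting on the finite-dimensional $E_k(g_0)$ normalised by $\|\cdot\|_{L^2(\Gamma,g_0)}$. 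The $\bar\tau_k$-$\mathcal{C}$-criticality hypothesis (either $\bar\tau_k(g(t))\leq \bar\tau_k(g_0)+o(t)$ or $\geq\bar\tau_k(g_0)+o(t)$) then forces the interval $[\min_V Q^0_{h_\varepsilon},\max_V Q^0_{h_\varepsilon}]$ to contain $0$ on the connected unit sphere $V:=\{u\in E_k(g_0):\|u\|_{L^2(\Gamma,g_0)}=1\}$, and the intermediate value theorem produces $u_\varepsilon\in V$ with
\[
-\int_M\langle\tau(u_\varepsilon),h_\varepsilon\rangle\,dv_{g_0}-\frac{\tau_k(g_0)}{2}\int_\Gamma u_\varepsilon^2 f\,ds_{g_0}=Q^0_{h_\varepsilon}(u_\varepsilon)=0.
\]
For Step~3, compactness of $V$ yields a subsequence $u_\varepsilon\to u\in V$; since $h_\varepsilon\to\omega$ in $L^2$ and $\tau(u_\varepsilon)\to\tau(u)$ strongly (finite-dimensional target), the bulk integral converges to $\int_M\langle\tau(u),\omega\rangle dv_{g_0}$, while the boundary integral converges to $\int_\Gamma u^2 f\,ds_{g_0}$, giving $\langle(\omega,\tfrac{\tau_k}{2}f),(\tau(u),u^2)\rangle_{L^2}=0$.

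The main obstacle is the sharpening of Lemma~\ref{lem:Derivative} used in Step~2: the lemma only asserts that when $\dot\tau_l(t_0)$ exists it equals $Q^{t_0}_{h_0}(u)$ for \emph{some} eigenfunction $u$, whereas the argument above requires identifying both one-sided derivatives as $\min/\max$ of $Q^0_{h_\varepsilon}$ over $V$. This rests on the standard Kato--Rellich analytic branching of the $m=\dim E_k(g_0)$ eigenvalues, but needs to be verified in the present mixed bulk/boundary Steklov-transmission framework, where the variational eigenvalues are associated with measures rather than with a single elliptic operator.
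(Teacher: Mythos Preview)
Your overall architecture---approximate $(\omega,f)$ by admissible tangent vectors whose $\Gamma$-trace is exactly $f$, extract from criticality a $u_\varepsilon\in E_k(g_0)$ on which the quadratic form vanishes, and pass to the limit---is the same as the paper's. The cut-off interpolation $h_\varepsilon=\chi_\varepsilon\hat F+(1-\chi_\varepsilon)\omega$ is a clean alternative to the density-and-projection argument the paper runs in each of the three cases, and it preserves the cone structure just as you say.

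The substantive difference is in Step~2. The paper never appeals to Kato--Rellich or to one-sided derivatives at $t=0$. Instead, after normalising the family so that $\ell_{g_i(t)}(\Gamma)$ is constant, it uses only the Lipschitz part of Lemma~\ref{lem:Derivative}: $t\mapsto\tau_l(g_i(t))$ is Lipschitz, hence differentiable almost everywhere, and at any differentiable point $t_0$ the lemma gives $\dot\tau_l(t_0)=Q^{t_0}_{h_i}(u)$ for \emph{every} normalised $u\in E_l(g_i(t_0))$. Criticality then produces sequences $t_j^i,s_j^i\to 0$ with $\dot\tau_l(g_i(t_j^i))\ge 0$ and $\dot\tau_l(g_i(s_j^i))\le 0$; taking eigenfunctions $u_{i,j}^{\pm}$ at those nearby times and passing to subsequential limits yields $u^{\pm}\in E_k(g_0)$ with $Q^0_{h_i}(u^+)\le 0\le Q^0_{h_i}(u^-)$, after which the intermediate value theorem on the segment $[u^-,u^+]$ finishes. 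So the paper's route is entirely self-contained from Lemma~\ref{lem:Derivative} and needs no analytic perturbation theory.

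On your version: the obstacle you flag is real, but note also that your identification of $\dot\tau_k^{\pm}(0)$ with the \emph{extremal} eigenvalues of $Q^0_{h_\varepsilon}$ on $E_k(g_0)$ is only correct when $\tau_{k-1}(g_0)<\tau_k(g_0)$; if $\tau_k$ is in the interior of a multiple cluster, the one-sided derivatives are intermediate eigenvalues of $Q^0_{h_\varepsilon}$. This does not break the conclusion (they still lie in $[\min_V Q,\max_V Q]$, and criticality still sandwiches $0$), but it is why the paper silently passes to the index $l$ with $\tau_{l-1}(g_0)<\tau_l(g_0)=\tau_k(g_0)$ before invoking Lemma~\ref{lem:Derivative}. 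If you want to keep your line of argument, the cheapest fix is to drop Kato--Rellich altogether and borrow the paper's Lipschitz/a.e.-differentiability device: for each fixed $\varepsilon$ run the Lipschitz argument on $\tau_l$ along the family generated by $h_\varepsilon$, obtain $u_\varepsilon^{\pm}$ with $Q^0_{h_\varepsilon}(u_\varepsilon^+)\le 0\le Q^0_{h_\varepsilon}(u_\varepsilon^-)$, and then proceed with your Step~3.
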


\begin{proof}
We begin by proving the existence of a sequence of families of metrics $(g_i(t))_t$ such that, for all $i$, we have $g_i(0) = g_0$, $\ell_{g_i(t)}(\Gamma) = \ell_{g_0}(\Gamma)$, and 
\[
(\dot{g}_i(0),\dot{g}_i(0)(T,T)|_\Gamma) \to (\omega,f) \text{ in } \mathcal{H}.
\]

\textit{Claim:} There exists a sequence of families $(\tilde{g}_i(t))_t \subset \mathcal{C}$ and a sequence $(h_i)_i$ with $h_i \in \mathcal{C} \cap C^\infty(S^2(M \setminus \Gamma)) \cap C^0(S^2(M))$ such that, for all $i$,
\[
\dot{\tilde{g}}_i(0) = h_i, \quad (h_i, h_i(T,T)|_\Gamma) \in T\mathcal{C}_{g_0}, \quad \int_{\Gamma} h_i(T,T) ds_{g_0} = 0,
\]
and 
\[
(h_i, h_i(T,T)) \to (\omega, f) \text{ in } \mathcal{H}.
\]

\textbf{Case 1: $\mathcal{C}$ is the full set of metrics.}  
  By density, there exists a sequence $(h_i)_i$ such that for all $i$,  
  \[
  h_i \in C^\infty(S^2(M \setminus \Gamma)) \cap C^0(S^2(M)), \quad \int_{\Gamma} h_i(T,T) ds_{g_0} = 0,
  \]
  and $(h_i, h_i(T,T)) \to (\omega,f)$ in $\mathcal{H}$. Define $\tilde{g}_i(t) := g_0 + t h_i$. Then,  
  \[
  \tilde{g}_i(0) = g_0, \quad \dot{\tilde{g}}_i(0) = h_i, \quad (h_i, h_i(T,T)|_\Gamma) \in T\mathcal{C}_{g_0}.
  \]

\textbf{Case 2: $\mathcal{C}$ is the family of metrics invariant under the action of $\mathbb{G}$.} 
  Taking $h_i$ as the projection of the sequence $(h_i)_i$ onto the space of $\mathbb{G}$-invariant symmetric $(0,2)$-tensor fields, we still have  
  \[
  (h_i, h_i(T,T)) \to (\omega, f) \text{ in } \mathcal{H},
  \]
  since the hypothesis ensures that $\omega$ and $f$ are $\mathbb{G}$-invariant as well. Then, setting $\tilde{g}_i(t) = g_0 + t h_i$ satisfies the required properties.

\textbf{Case 3: $\mathcal{C}$ is a conformal class of metrics.}  
  Consider $(g(t) = e^{\alpha(t)} g_0)_t$, where $(\alpha(t))_t$ is chosen such that $\alpha(0) = 0$ and $\dot{g}(0) = \dot{\alpha}(0) g_0 = \omega$.  
  Let $(\alpha_i(t))_i$ be a sequence such that:
  \[
  \alpha_i(t) \in C^\infty(M \setminus \Gamma) \cap C^0(M), \quad \int_{\Gamma} \dot{\alpha}_i(0) g_0(T,T) ds_{g_0} = 0, \quad \forall i,
  \]
  and 
  \[
  (\dot{\alpha}_i(0) g_0, \dot{\alpha}_i(0) g_0(T,T)|_\Gamma) \to (\omega, f) \text{ in } \mathcal{H}.
  \]
  Setting $\tilde{g}_i(t) = e^{\alpha_i(t)} g_0$ ensures that $(\tilde{g}_i(t))_i$ satisfies the desired properties. This proves the claim. Now, define
\[
g_i(t) = \frac{\ell_{g_0}(\Gamma)}{\ell_{\tilde{g_i}(t)}(\Gamma)} \tilde{g}_i(t).
\]
Then, $g_i(0) = g_0$ and $\ell_{g_i(t)}(\Gamma) = \ell_{g_0}(\Gamma)$. Moreover, since
\[
\frac{d}{dt} \Big|_{t=0} \ell_{\tilde{g}_i(t)}(\Gamma) = \int_{\Gamma} h_i(T,T) ds_{g_0} = 0,
\]
it follows that $\frac{d}{dt} |_{t=0} g_i(t) = h_i$.

Next, let $l$ be such that $\tau_{l-1}(g_0) < \tau_l(g_0) = \tau_k(g_0)$. By criticality, suppose without loss of generality that  
\[
\bar{\tau}_l(g_i(t)) \leq \bar{\tau}_l(g_0) + o(t).
\]
By Lemma \ref{lem:Derivative}, $\dot{\tau}_l(g_i(t))$ exists almost everywhere since $t \mapsto \dot{\tau}_l(g_i(t))$ is Lipschitz.  
Take sequences $(t^i_j)_j$ and $(s^i_j)_j$ such that $t^i_j, s^i_j \to 0$ as $j \to \infty$ and
\[
\dot{\tau}_l(g_i(t^i_j)) \geq 0, \quad \dot{\tau}_l(g_i(s^i_j)) \leq 0.
\]
Choose $u_{i,j}^+ \in E_k(g_i(t^i_j))$ and $u_{i,j}^- \in E_k(g_i(s^i_j))$ such that  
\[
\| u_{i,j}^+ \|_{L^2} = \| u_{i,j}^- \|_{L^2} = 1.
\]
After taking subsequences, we obtain $u_{i,j}^+ \to u^+$ and $u_{i,j}^- \to u^-$ in $C^2(M \setminus \Gamma) \cap C^0(M)$, with $u^+, u^- \in E_k(g_0)$. Then:
\[
\left\langle \left(\omega, \frac{\tau_k(g_0)}{2} f\right), (\tau(u^+), (u^+)^2) \right\rangle_{L^2} 
= \lim_{i,j \to \infty} \left[ - Q^{t^i_j}_{h_i}(u_{i,j}^+) \right] = \lim_{i,j \to \infty} -\dot{\tau}_l(g_i(t^i_j)) \leq 0.
\]
Similarly, we obtain:
\[
\left\langle \left(\omega, \frac{\tau_k(g_0)}{2} f\right), (\tau(u^-), (u^-)^2) \right\rangle_{L^2} 
= \lim_{i,j \to \infty} \left[ - Q^{s^i_j}_{h_i}(u_{i,j}^-) \right] = \lim_{i,j \to \infty} -\dot{\tau}_l(g_i(s^i_j)) \geq 0.
\]
Since the function
\[
t \mapsto \left\langle \left(\omega, \frac{\tau_k(g_0)}{2} f\right), (\tau((1-t) u^- + t u^+), ((1-t) u^- + t u^+)^2) \right\rangle_{L^2}
\]
is continuous on $[0,1]$, there exists $u \in E_k(g_0)$ with $\| u \|_{L^2(\Gamma)} = 1$ such that  
\[
\left\langle \left(\omega, \frac{\tau_k(g_0)}{2} f\right), (\tau(u), u^2) \right\rangle_{L^2} = 0.
\]
This concludes the proof.
\end{proof}

\subsection{Criticality conditions in the space of all \texorpdfstring{$\mathbb G$}{G}-invariant metrics} Let $\mathbb G$ be a compact Lie group acting on $M$ and leaving $\Gamma$ invariant. 
We prove Theorem \ref{thm:Crit_S1} in a slightly more general setting:

\begin{theorem}\label{thm:Crit_S1_2}
Let $g_0$ be a $\mathbb{G}$-invariant metric on $M$ that is critical for $\bar{\tau}_k$ under $\mathbb{G}$. Then, there exist $n>0$, a representation $A\colon \mathbb{G}\to O(n+1)$ and a collection of $\tau_k(g_0)$-eigenfunctions $u_1, \dots, u_{n+1}$ such that $\Phi = (u_1, \dots, u_{n+1})$ defines an equivariant free curve minimal immersion 
\[
\Phi\colon (M, \Gamma) \to \mathbb{B}^{n+1},\quad \Phi(\kappa\cdot x) = A_\kappa(\Phi(x))
\]
such that $g_0 = f_0 g_\Phi$, where $f_0$ constant on $\Gamma$. 

Conversely, if $\Phi \colon (M, \Gamma) \to \mathbb{B}^{n+1}$ is a an equivariant free curve minimal immersion satisfying 
\[
\tau_{k-1}(M, \Gamma, g_\Phi) < \tau_k(M, \Gamma, g_\Phi) = 1,
\]
then the metric $g_\Phi$ is $\bar{\tau}_k$-critical under $\mathbb{G}$.
\end{theorem}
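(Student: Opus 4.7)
My plan is to adapt the classical Nadirashvili--Fraser--Schoen convex separation framework to our transmission setting, preserving the $\mathbb{G}$-equivariance throughout. The forward direction rests on Lemma \ref{lem:crit_eigenf}, Hahn--Banach separation, and Carath\'eodory; the converse on a first-variation computation via Lemma \ref{lem:Derivative}.

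\emph{Forward direction.} I would first consider the closed convex set
\[
\mathcal{K}\ :=\ \overline{\mathrm{conv}}\Bigl\{\,\pi_{\mathbb{G}}\bigl(\tau(u),\,\tfrac{\tau_k(g_0)}{2}(u^2-c_u)\bigr)\ :\ u\in E_k(g_0),\ \|u\|_{L^2(\Gamma,g_0)}=1\Bigr\}
\]
inside the space of $\mathbb{G}$-invariant mean-zero pairs in $\mathcal{H}$, where $c_u=\length_{g_0}(\Gamma)^{-1}\int_\Gamma u^2\,ds_{g_0}$ enforces the mean-zero normalisation and $\pi_{\mathbb{G}}$ denotes $\mathbb{G}$-averaging. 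If $0\notin\mathcal{K}$, Hahn--Banach produces a $\mathbb{G}$-invariant admissible test pair strictly separating $0$ from $\mathcal{K}$, contradicting Lemma \ref{lem:crit_eigenf}; hence $0\in\mathcal{K}$, and Carath\'eodory provides positive weights $\alpha_i$ and eigenfunctions $u_1,\ldots,u_{n+1}\in E_k(g_0)$ with $\sum_i\alpha_i\,\tau(u_i)\equiv 0$ on $M\setminus\Gamma$ and $\sum_i\alpha_i\,u_i^2\equiv c$ on $\Gamma$. Choosing the $u_i$ $\mathbb{G}$-compatibly (by decomposing their span into $\mathbb{G}$-isotypical components) and setting $v_i:=\sqrt{\alpha_i}\,u_i$, $\Phi:=c^{-1/2}(v_1,\ldots,v_{n+1})$, the identity $\sum\tau(v_i)=0$ is weak conformality of $\Phi$ off $\Gamma$, so $\Phi$ is a branched minimal immersion of $M\setminus\Gamma$; $\sum v_i^2\equiv 1$ places $\Phi(\Gamma)\subset\partial\B^{n+1}$; and applying the transmission equation componentwise gives $\partial_{n_1}\Phi+\partial_{n_2}\Phi=\tau_k(g_0)\Phi$, parallel to the outward normal of $\partial\B^{n+1}$, yielding the free curve condition. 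The $\mathbb{G}$-compatible selection produces an orthogonal representation $A\colon\mathbb{G}\to O(n+1)$ intertwining the actions, and the conformal factor relation $g_0=f_0\,g_\Phi$ with $f_0|_\Gamma$ constant follows from $\rho_\Phi=(\partial_{n_1}\Phi+\partial_{n_2}\Phi,\Phi)=\tau_k(g_0)|\Phi|^2\equiv\tau_k(g_0)$ along $\Gamma$.

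\emph{Converse direction.} Given an equivariant free curve minimal immersion $\Phi$ with components being $\tau_k=1$ eigenfunctions and $\tau_{k-1}(g_\Phi)<1$, the identities $\sum\tau(u_i)=0$ on $M\setminus\Gamma$ and $\sum u_i^2\equiv 1$ on $\Gamma$ give the key relation
\[
\sum_i Q^{g_\Phi}_h(u_i)\ =\ -\tfrac{1}{2}\int_\Gamma h(T,T)\,ds_{g_\Phi}\ =\ -\dot L(0)
\]
for any $\mathbb{G}$-invariant symmetric tensor $h$, where $\dot L(0)$ is the derivative of $\length_{g(t)}(\Gamma)$ at $t=0$. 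The spectral gap keeps $E_k(g(s))$ varying continuously for small $s$, and the ``for any $u$'' clause in Lemma \ref{lem:Derivative} forces $Q^s_h|_{E_k(g(s))}$ to be a constant multiple of the $L^2(\Gamma,g(s))$ inner product at every differentiability point $s$ of $\tau_k$. Combining this constancy with the key relation and passing to the limit $s\to 0$ along differentiability points pins down $\dot\tau_k(s)\to -\dot L(0)/\length_{g_\Phi}(\Gamma)$, so $\dot{\bar\tau}_k(s)\to 0$; the local Lipschitz continuity of $\bar\tau_k$ from Lemma \ref{lem:Derivative} then integrates this to $\bar\tau_k(g(t))-\bar\tau_k(g_\Phi)=o(t)$, the desired criticality.

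The main obstacle, as in \cite{FS,KKMS}, will be the Hahn--Banach step in the $\mathbb{G}$-equivariant category: one must verify that the admissibility hypothesis in Lemma \ref{lem:crit_eigenf} (in particular the existence of an extension $\hat F$ realising arbitrary $\mathbb{G}$-invariant boundary data $f$) does not obstruct strict separation within the space of $\mathbb{G}$-invariant test pairs. I expect this to reduce, as in previous works, to a density argument matching admissible $\mathbb{G}$-invariant boundary traces on $\Gamma$ with tensor fields supported away from $\Gamma$.
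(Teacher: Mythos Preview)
Your overall strategy tracks the paper's, but there are two genuine gaps in the forward direction. First, you place the $\mathbb{G}$-averaging on the elements of $\mathcal{K}$; then $0\in\mathcal{K}$ yields only the averaged identity $\pi_{\mathbb{G}}\bigl(\sum_i\alpha_i\tau(u_i)\bigr)=0$, which is \emph{not} pointwise conformality of $(v_i)$. The paper instead defines $\mathcal{K}=\operatorname{conv}\{(\tau(u),\tfrac{\tau_k}{2}u^2|_\Gamma):u\in E_k(g_0)\}$ without averaging, so that membership of $(0,\tfrac{\tau_k}{2})$ gives the genuine pointwise identities $\sum\tau(u_i)=0$ and $\sum u_i^2=1$ on $\Gamma$; the $\mathbb{G}$-averaging is applied only to the Hahn--Banach separator $(\omega,\xi)$, producing $\mathbb{G}$-invariant test data $(W,B)$ to which Lemma~\ref{lem:crit_eigenf} then applies. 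Second, and more seriously, your equivariance step --- ``choosing the $u_i$ $\mathbb{G}$-compatibly by decomposing their span into isotypical components'' --- does not work as stated: the span of the Carath\'eodory witnesses need not be $\mathbb{G}$-invariant, and even if it were, an arbitrary change of basis need not preserve $\sum u_i^2=1$. The paper's device is different and robust: starting from the (not yet equivariant) $\Phi$, one defines $\hat\Phi\colon M\to L^2(\mathbb{G},\mathbb{R}^{n+1})$ by $[\hat\Phi(x)](\kappa)=\Phi(\kappa\cdot x)$. This map lands in a finite-dimensional subspace (the $\tau_k$-eigenspace is finite-dimensional and $\mathbb{G}$-invariant), is automatically equivariant under the orthogonal representation $(A_\zeta F)(\kappa)=F(\kappa\zeta)$, and one checks directly that $|\hat\Phi|^2=1$ on $\Gamma$ and that conformality survives the $\kappa$-integration because $\Phi$ was already conformal. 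This $\hat\Phi$ construction is the missing idea.

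For the converse, your conclusion $\bar\tau_k(g(t))-\bar\tau_k(g_\Phi)=o(t)$ is stronger than what can be established and than what is needed. When $\tau_k(g_\Phi)$ has multiplicity greater than one, $\tau_k$ need not be differentiable at $t=0$, and subsequential limits of $\dot\tau_k(s_j)$ along differentiability points $s_j\to 0$ may realise different values $Q^0(u_0)$ for different $u_0\in E_k(g_\Phi)$; no single limit is pinned down. The paper instead passes to a length-preserving variation $\tilde g(t)$, so that $\sum_i Q^0_{\dot{\tilde g}}(u_i)=0$, extracts some $u^1$ with $Q^0(u^1)\le 0$, and concludes only $\bar\tau_k(g(t))\le\bar\tau_k(g_\Phi)+o(t)$ --- one of the two alternatives in Definition~\ref{def:extremal}, which is all criticality requires.
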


\begin{proof}
Define the convex hull
\[
\mathcal{K} = \operatorname{conv} \left\{ \left(\tau(u), \frac{\tau_k(g_0)}{2} u^2|_\Gamma \right) \mid u \in E_k(g_0) \right\} \subset \mathcal{H}.
\]

\textbf{Claim:} \quad $\left(0, \frac{\tau_k(g_0)}{2} \right) \in \mathcal{K}$.  
Otherwise, by the geometric form of the Hahn-Banach theorem, there exists $(\omega, \xi) \in \mathcal{H}$ such that
\[
\left\langle (\omega, \xi), \left(0, \frac{\tau_k(g_0)}{2} \right) \right\rangle > 0
\]
and
\[
\left\langle (\omega, \xi), \left(\tau(u), \frac{\tau_k(g_0)}{2} u^2 \right) \right\rangle \leq 0, \quad \forall u \in E_k(g_0).
\]

Denote by $L^2_{\mathbb{G}}(S^2(M \setminus \Gamma), g_0)$ the space of $\mathbb{G}$-invariant square-integrable symmetric $(0,2)$-tensor fields on $M \setminus \Gamma$, and let $L^2_{\mathbb{G}}(\Gamma, g_0)$ denote the space of $\mathbb{G}$-invariant functions in $L^2(\Gamma, g_0)$.

Since $g_0$ is $\mathbb{G}$-invariant, the action $R_\kappa$ is an isometry from $(M, g_0)$ to itself for all $\kappa \in \mathbb{G}$, where $R_\kappa(x) = \kappa \cdot x$. Therefore, for all $\kappa \in \mathbb{G}$ and $u \in E_k(g_0)$, we have $u \circ R_{\kappa^{-1}} \in E_k(g_0)$. Thus,
\[
\int_M \left\langle \tau(u \circ R_{\kappa^{-1}}), \omega \right\rangle dv_{g_0} 
+ \frac{\tau_k(g_0)}{2} \int_{\Gamma} (u \circ R_{\kappa^{-1}})^2 \xi \, ds_{g_0} \leq 0.
\]
By change of variables, we obtain:
\[
\int_M \left\langle \tau(u), \omega \circ R_{\kappa} \right\rangle dv_{g_0} 
+ \frac{\tau_k(g_0)}{2} \int_{\Gamma} u^2 (\xi \circ R_{\kappa}) \, ds_{g_0} \leq 0.
\]
Integrating over $\kappa \in \mathbb{G}$ -- equipped with the bi-invariant probability Haar measure $d\mu$ -- and applying Fubini's theorem, we get
\begin{equation} \label{ineq:Sym_Hahn_Banach}
\int_M \left\langle \tau(u), W \right\rangle dv_{g_0} 
+ \frac{\tau_k(g_0)}{2} \int_{\Gamma} u^2 B \, ds_{g_0} \leq 0,
\end{equation}
where
\[
W(x) = \int_{\mathbb{G}} (\omega \circ R_{\kappa})(x) d\mu(\kappa), \quad
B(x) = \int_{\mathbb{G}} (\xi \circ R_{\kappa})(x) d\mu(\kappa).
\]
Similarly, since 
\[
\left\langle (\omega, \xi), \left(0, \frac{\tau_k(g_0)}{2} \right) \right\rangle > 0,
\]
we also obtain
\[
\left\langle (W, B), \left(0, \frac{\tau_k(g_0)}{2} \right) \right\rangle > 0.
\]

Now, define
\[
f = B - \frac{1}{\ell_{g_0}(\Gamma)} \int_{\Gamma} B \, ds_{g_0},
\]
so that
\[
\int_{\Gamma} f \, ds_{g_0} = 0.
\]
Note that $W \in L^2_{\mathbb{G}}(S^2(M \setminus \Gamma), g_0)$ and $f \in L^2_{\mathbb{G}}(\Gamma, g_0)$. The pair $(W, f) \in \mathcal{H}$ satisfies the hypothesis of Lemma \ref{lem:crit_eigenf}, implying the existence of some $u \in E_k(g_0)$ such that
\[
\left\langle (W, \frac{\tau_k(g_0)}{2} f), (\tau(u), u^2) \right\rangle_{L^2} = 0.
\]

However,
\[
\left\langle (W, \frac{\tau_k(g_0)}{2} f), (\tau(u), u^2) \right\rangle_{L^2}
= \int_M \left\langle W, \tau(u) \right\rangle dv_{g_0} 
+ \frac{\tau_k(g_0)}{2} \int_{\Gamma} u^2 f \, ds_{g_0}
\]
\[
= \int_M \left\langle W, \tau(u) \right\rangle dv_{g_0} 
+ \frac{\tau_k(g_0)}{2} \int_{\Gamma} u^2 B \, ds_{g_0} 
- \tau_k(g_0) \frac{\int_{\Gamma} u^2 \, ds_{g_0}}{2 \ell_{g_0}(\Gamma)} \int_{\Gamma} B \, ds_{g_0}
\]
\[
= \left\langle (W, \frac{\tau_k(g_0)}{2} B), (\tau(u), u^2) \right\rangle_{L^2}
- \frac{\int_{\Gamma} u^2 \, ds_{g_0}}{\ell_{g_0}(\Gamma)} 
\left\langle (W, B), \left(0, \frac{\tau_k(g_0)}{2} \right) \right\rangle_{L^2}<0.
\]

This contradiction shows that our assumption was false, and consequently,
\[
\left( 0, \frac{\tau_k(g_0)}{2} \right) \in \mathcal{K},
\]

and there exist $u_1, \dots, u_{n+1} \in E_k(g_0)$ such that
\[
0 = \sum_{i=1}^{n+1} \tau(u_i) = \sum_{i=1}^{n+1} \left( du_i \otimes du_i - \frac{1}{2} |\nabla u_i|^2 g_0 \right) \quad \text{in } M \setminus \Gamma,
\]
and
\[
1 = \sum_{i=1}^{n+1} u_i^2 \quad \text{on } \Gamma.
\]

In $M \setminus \Gamma$, we compute
\[
-\Delta_{g_0} \left( \sum_{i=1}^{n+1} u_i^2 \right) 
= 2 \sum_{i=1}^{n+1} |\nabla u_i|_{g_0}^2 - \sum_{i=1}^{n+1} u_i \Delta_{g_0} u_i
= 2 |\nabla \Phi|^2_{g_0} \geq 0.
\]
Hence, $\sum_{i=1}^{n+1} u_i^2$ is subharmonic in $M \setminus \Gamma$, meaning it cannot attain a local maximum in the open set $M \setminus \Gamma$. Consequently, we deduce:
\[
\sum_{i=1}^{n+1} u_i^2 \leq \sup_{\Gamma} \sum_{i=1}^{n+1} u_i^2 = 1.
\]
This implies that the codomain of $\Phi=(u_1, \dots, u_{n+1})$ is indeed $\mathbb{B}^{n+1}$.

Finally, we observe that
\[
\Phi^*(g_{\text{std}}) = \sum_{i=1}^{n+1} du_i \otimes du_i = \left( \frac{1}{2} \sum_{i=1}^{n+1} |\nabla u_i|^2 \right) g_0,
\]
which implies that $\Phi$ is a conformal free curve harmonic map, or equivalently, 
\[
\Phi \colon (M, \Gamma) \to \mathbb{B}^{n+1}
\]
is a free curve minimal immersion. 

Let us now show that it is possible to choose $\Phi$ to be equivariant. To this end, consider the space $L^2(\mathbb{G},\R^{n+1})$, where $\mathbb{G}$ is once again equipped with the bi-invariant probability Haar measure $d\mu$. Given $\Phi$ we construct the map $\hat\Phi\colon M\to L^2(\mathbb{G},\R^{n+1})$ given by $\left[\hat\Phi(x)\right](\kappa) = \Phi(\kappa\cdot x)$. We first observe that since $\tau_k(M,\Gamma,g)$-eigenspace is finite-dimensional and $\mathbb{G}$-invariant, the image of $\hat\Phi$ lies in a finitely dimensional subspace. If $x\in\Gamma$, then
\[
|\hat\Phi(x)|^2 = \int_{\mathbb{G}}|\Phi(\kappa\cdot x)|^2\,d\mu = 1.
\]
The map $\hat\Phi$ is still by $\tau_k(M,\Gamma,g)$-eigenfunctions and, furthermore, 
\[
\int_\mathbb{G}d[\Phi(\kappa\cdot x)]\odot d[\Phi(\kappa\cdot x)]\,d\mu = \frac{1}{2}\int_{\mathbb{G}}|d[\Phi(\kappa\cdot x)]|^2_{g_0}   \,d\mu = g_\Phi,
\]
where for a map $\Psi$ from $(M,g)$ to an inner product space, we define the bilinear form $(d\Psi\odot d\Psi) (\xi,\eta) =  \la d\Psi(\xi),d\Psi(\eta)\ra$. As a result $\hat\Phi$ is still a free curve minimal immersion by $\tau_k(M,\Gamma,g)$-eigenfunctions. Finally, we observe that $A_\zeta$ given by $(A_\zeta F)(\kappa) = F(\kappa\zeta)$ is an orthogonal transformation of $L^2(\mathbb{G},\R^{n+1})$ and one has \[
[\hat\Phi(\zeta\cdot x)](\kappa) = \Phi (\kappa\zeta\cdot x) =  [\hat\Phi(x)](\kappa\zeta) = [A_\zeta\hat\Phi(x)](\kappa),
\]
so that $\hat \Phi$ is $A$-equivariant.

Conversely, assume that $\Phi = (u_1, \dots, u_{n+1}) \colon (M, \Gamma) \to \mathbb{B}^{n+1}$ is an equivariant free curve minimal immersion satisfying 
\[
\tau_{k-1}(M, \Gamma, g_\Phi) < \tau_k(M, \Gamma, g_\Phi) = 1.
\]
Let $(g(t))_t$ be a smooth variation of metrics with $g(0)=g_\Phi$. We can replace this family by another family $(\tilde{g}(t))_t$ satisfying $\tilde{g}(0) = g_\Phi$ and $\ell_{\tilde{g}(t)}(\Gamma) = \ell_{g_\Phi}(\Gamma)$. The latter condition ensures that
\[
\int_{\Gamma} \dot{\tilde{g}}(t)(T,T) \, ds_{\tilde{g}(t)} = 0.
\]

By Lemma \ref{lem:Derivative}, for almost every $t$, we have
\[
\dot{\tau}_k(t) = Q^{t}_{\dot{\tilde{g}}(t)}(u),
\]
for any $u \in E_k(\tilde{g}(t))$ with $\|u\|_{L^2(\Gamma, \tilde{g}(t))} = 1$.

On the other hand, summing over the eigenfunctions $u_1, \dots, u_{n+1}$, we obtain:
\[
\sum_{i=1}^{n+1} Q^{t}_{\dot{\tilde{g}}(t)}(u_i) = -\int_{M} \left\langle \sum_{i=1}^{n+1} \tau(u_i), \dot{\tilde{g}} \right\rangle dv_{\tilde{g}(t)} -\frac{1}{2} \int_{\Gamma} \left(\sum_{i=1}^{n+1} u_i^2\right) \dot{\tilde{g}}(T,T)ds_{\tilde{g}(t)} = 0.
\]
Thus, there exist $u^1, u^2 \in E_k(g_\Phi)$ such that
\[
\lim_{t \to 0^+} Q^{t}_{\dot{\tilde{g}}(t)}(u^1) \leq 0, 
\quad \text{and} \quad
\lim_{t \to 0^-} Q^{t}_{\dot{\tilde{g}}(t)}(u^2) \geq 0.
\]
Therefore, it follows that 
\[
\bar{\tau}_k(g(t)) \leq \bar{\tau}_k(g_0) + o(t).
\]

\end{proof}

\subsection{Criticality conditions in the space of all metrics} We state theorem \ref{thm:CritMin} in the following equivalent form :

\begin{theorem}\label{thm:CritMin2}
Let $g_0$ be a metric on $M$ that is critical for $\bar{\tau}_k$. Then, there exist $\tau_k(g_0)$-independent eigenfunctions $u_1, \dots, u_{n+1}$ defining a free curve minimal immersion
\[
\Phi = (u_1, \dots, u_{n+1}) \colon (M, \Gamma) \to \mathbb{B}^{n+1},
\]
such that $g_0 = f_0 g_\Phi$ with $f_0$ constant on $\Gamma$. 

Conversely, if $\Phi \colon (M, \Gamma) \to \mathbb{B}^{n+1}$ is a free curve minimal immersion satisfying 
\[
\tau_{k-1}(M, \Gamma, g_\Phi) < \tau_k(M, \Gamma, g_\Phi) = 1,
\]
then the metric $g_\Phi$ is $\bar{\tau}_k$-critical.
\end{theorem}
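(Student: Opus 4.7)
The plan is to argue exactly as in Theorem~\ref{thm:Crit_S1_2}, but with $\mathcal{C}$ equal to the full set of metrics, so that the symmetrisation step via the Haar integral becomes unnecessary and the proof is actually a special case (and slight simplification) of the equivariant argument. First I would set up the same convex cone
\[
\mathcal{K} = \operatorname{conv}\left\{\left(\tau(u),\tfrac{\tau_k(g_0)}{2}u^2|_\Gamma\right) \,:\, u\in E_k(g_0)\right\}\subset \mathcal{H},
\]
and the goal is to show $(0,\tfrac{\tau_k(g_0)}{2})\in\mathcal{K}$. Once this is established, an application of Carathéodory's theorem gives finitely many eigenfunctions $u_1,\dots,u_{n+1}\in E_k(g_0)$ (convex coefficients can be absorbed into the $u_i$) with $\sum_i\tau(u_i)=0$ on $M\setminus\Gamma$ and $\sum_i u_i^2=1$ on $\Gamma$. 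The identity $\sum_i\tau(u_i)=0$ is exactly the conformality of $\Phi=(u_1,\dots,u_{n+1})$, and the components being harmonic on $M\setminus\Gamma$ then yields a conformal harmonic map, i.e.\ a minimal immersion. The boundary identity $|\Phi|^2=1$ on $\Gamma$ together with subharmonicity of $|\Phi|^2$ on $M\setminus\Gamma$ (using $-\Delta_{g_0}|\Phi|^2=2|\nabla\Phi|^2_{g_0}\ge 0$) forces $\Phi(M)\subset \overline{\B^{n+1}}$ by the maximum principle, and the Euler--Lagrange boundary condition $\bd_{n_1}u_i+\bd_{n_2}u_i=\tau_k(g_0)u_i$ on $\Gamma$ produces the free curve condition after normalising the length, so that $g_0=f_0 g_\Phi$ with $f_0$ constant on $\Gamma$.

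For the Hahn--Banach step, I would proceed by contradiction: if $(0,\tfrac{\tau_k(g_0)}{2})\notin\mathcal{K}$, then there is $(\omega,\xi)\in\mathcal{H}$ separating it, so that $\int_\Gamma \xi\,ds_{g_0}>0$ while
\[
\int_M\la \tau(u),\omega\ra\,dv_{g_0}+\tfrac{\tau_k(g_0)}{2}\int_\Gamma u^2\xi\,ds_{g_0}\le 0,\quad \forall u\in E_k(g_0).
\]
Setting $f:=\xi-\tfrac{1}{\ell_{g_0}(\Gamma)}\int_\Gamma\xi\,ds_{g_0}$, the pair $(\omega,f)$ satisfies $\int_\Gamma f\,ds_{g_0}=0$ and lies in $T\mathcal{C}_{g_0}$ (the full-metrics tangent cone realises every such pair, taking e.g.\ $\tilde g(t)=g_0+th$ with $h$ chosen to have prescribed trace on $\Gamma$). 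Lemma~\ref{lem:crit_eigenf} then yields $u\in E_k(g_0)$ with
\[
\int_M\la\omega,\tau(u)\ra\,dv_{g_0}+\tfrac{\tau_k(g_0)}{2}\int_\Gamma u^2 f\,ds_{g_0}=0,
\]
which, after re-expressing $f$ in terms of $\xi$ and using $\|u\|_{L^2(\Gamma)}=1$, contradicts the strict separation exactly as in the proof of Theorem~\ref{thm:Crit_S1_2}.

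For the converse, given a free curve minimal immersion $\Phi=(u_1,\dots,u_{n+1})$ with $\tau_{k-1}(g_\Phi)<\tau_k(g_\Phi)=1$, I would take any smooth family $g(t)$ with $g(0)=g_\Phi$, rescale to fix the length of $\Gamma$ (so that $\int_\Gamma\dot g(0)(T,T)\,ds_{g_\Phi}=0$), and compute via Lemma~\ref{lem:Derivative} that
\[
\sum_{i=1}^{n+1}Q^t_{\dot g(t)}(u_i)=-\int_M\la\textstyle\sum_i\tau(u_i),\dot g(t)\ra\,dv_{g(t)}-\tfrac{1}{2}\int_\Gamma\bigl(\textstyle\sum_i u_i^2\bigr)\dot g(t)(T,T)\,ds_{g(t)}=0,
\]
using the conformality $\sum_i\tau(u_i)=0$ and the boundary normalisation $\sum_i u_i^2=1$. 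Hence for each $t$ there exist eigenfunctions realising both signs of $\dot\tau_k$, which forces the one-sided inequality required by Definition~\ref{def:extremal}.

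The main obstacle is the Hahn--Banach step and the verification that $(\omega,f)\in T\mathcal{C}_{g_0}$ for the separating $(\omega,\xi)$; this is where the unconstrained setting actually simplifies the argument, since no symmetrisation over $\mathbb{G}$ is needed and the $\mathbb{G}$-invariance considerations of the proof of Theorem~\ref{thm:Crit_S1_2} can be dropped entirely. The rest (subharmonicity, the boundary check, and the converse) is essentially the same bookkeeping as in the proof of Theorem~\ref{thm:Crit_S1_2}.
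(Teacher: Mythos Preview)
Your proposal is correct and follows exactly the approach the paper intends: the paper's own proof simply states that the argument is the same as Theorem~\ref{thm:Crit_S1_2} but ``even less constraining'', since the Haar-averaging step over $\mathbb{G}$ is unnecessary and one may apply Lemma~\ref{lem:crit_eigenf} directly to the separating pair $(\omega,\xi)$ (after subtracting the mean of $\xi$). Both the Hahn--Banach/convex-hull direction and the converse via $\sum_i Q^t_{\dot{\tilde g}(t)}(u_i)=0$ are carried out in the paper for the equivariant case, and your write-up correctly identifies the simplifications.
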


\begin{proof} The proof is similar and even less constraining than that of Theorem \ref{thm:Crit_S1_2} and is therefore omitted.
\end{proof}

\subsection{Criticality conditions for a fixed conformal class}
We state theorem \ref{thm:Crithar} in the following equivalent form :

\begin{theorem}\label{thm:Crithar2}
Let $g_0$ be a metric on $M$ that is conformally critical for $\bar{\tau}_k$. Then, there exist $\tau_k(g_0)$-independent eigenfunctions $u_1, \dots, u_{n+1}$ defining a free curve harmonic map 
\[
\Phi = (u_1, \dots, u_{n+1}) \colon (M, \Gamma, [g]) \to \mathbb{B}^{n+1},
\]
such that $g_0 = f_0 g_\Phi$ with $f_0$ constant on $\Gamma$. 

Conversely, if $\Phi \colon (M, \Gamma, [g]) \to \mathbb{B}^{n+1}$ is a free curve harmonic map satisfying 
\[
\tau_{k-1}(M, \Gamma, g_\Phi) < \tau_k(M, \Gamma, g_\Phi) = 1,
\]
then the metric $g_\Phi$ is $\bar{\tau}_k$-conformally critical.
\end{theorem}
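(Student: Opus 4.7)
The plan is to mirror the proofs of Theorems~\ref{thm:Crit_S1_2} and~\ref{thm:CritMin2}, exploiting the key algebraic simplification afforded by restricting to conformal variations. I apply Lemma~\ref{lem:crit_eigenf} with $\mathcal{C}=[g_0]$: the tangent cone $T\mathcal{C}_{g_0}$ now consists of pairs $(\omega g_0,\omega|_\Gamma)$ for smooth $\omega$. The crucial observation specific to the conformal setting is that in dimension two the stress--energy tensor $\tau(u)=du\otimes du-\tfrac12|\nabla u|^2_{g_0}g_0$ is pointwise trace-free, so $\langle\tau(u),\omega g_0\rangle_{g_0}=\omega\,\mathrm{tr}_{g_0}\tau(u)=0$. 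The interior integral appearing in Lemma~\ref{lem:crit_eigenf} therefore vanishes automatically, and the criticality condition collapses to the following: for every smooth $\omega$ on $M$ with $\int_\Gamma\omega\,ds_{g_0}=0$ there exists $u\in E_k(g_0)$, $\|u\|_{L^2(\Gamma,g_0)}=1$, satisfying $\int_\Gamma\omega u^2\,ds_{g_0}=0$.

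I then run a Hahn--Banach / convex hull argument directly in $L^2(\Gamma,g_0)$. Set $\mathcal{K}'=\mathrm{conv}\{u^2|_\Gamma:u\in E_k(g_0),\ \|u\|_{L^2(\Gamma,g_0)}=1\}$ and claim that the constant function $\ell_{g_0}(\Gamma)^{-1}$ belongs to $\mathcal{K}'$. Otherwise, the geometric Hahn--Banach theorem supplies a separating $f\in L^2(\Gamma)$; replacing $f$ by $f-\ell_{g_0}(\Gamma)^{-1}\int_\Gamma f\,ds_{g_0}$ and extending smoothly to $M$, the reformulated criticality above yields a contradiction. Carath\'eodory then produces $u_1,\dots,u_{n+1}\in E_k(g_0)$, and after absorbing the convex coefficients into rescaling one obtains $\sum_i u_i^2\equiv1$ on $\Gamma$. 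The map $\Phi=(u_1,\dots,u_{n+1})$ is harmonic on $M\setminus\Gamma$, sends $\Gamma$ into $\partial\B^{n+1}$, stays in $\B^{n+1}$ by subharmonicity of $|\Phi|^2$, and satisfies $\partial_{n_1}\Phi+\partial_{n_2}\Phi=\tau_k(g_0)\Phi\perp\partial\B^{n+1}$ along $\Phi(\Gamma)$, hence is a free curve harmonic map. Moreover $\rho_\Phi=\tau_k(g_0)|\Phi|^2=\tau_k(g_0)$ is constant on $\Gamma$, so $g_0=\hat\rho_\Phi^{-2}g_\Phi$ realizes the claimed conformal relation with the prefactor constant on $\Gamma$.

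For the converse, given a free curve harmonic map $\Phi=(u_1,\dots,u_{n+1})$ satisfying the spectral gap assumption, I take an arbitrary smooth conformal variation $g(t)\in[g_\Phi]$ with $g(0)=g_\Phi$ and normalize so that $\ell_{g(t)}(\Gamma)$ is constant; this forces $\int_\Gamma\dot g(0)(T,T)\,ds_{g_\Phi}=0$. Summing the quadratic form identity of Lemma~\ref{lem:Derivative} over the components $u_i$, the interior part vanishes by the same trace-free property applied to the conformal tangent $\dot g(0)$, and the boundary part vanishes because $\sum_i u_i^2\equiv1$ on $\Gamma$ combined with the length normalization. Hence $\sum_i Q^0_{\dot g(0)}(u_i)=0$. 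Lemma~\ref{lem:Derivative} further asserts that at any differentiability point $t$ of $\tau_k$, the value $Q^t_{\dot g(t)}(u)$ is independent of the normalized eigenfunction $u\in E_k(g(t))$; propagating this through eigenspace continuity as $t\to0$ (ensured by $\tau_{k-1}<\tau_k$), I conclude that $Q^0_{\dot g(0)}$ is a constant multiple of $\|\cdot\|_{L^2(\Gamma,g_\Phi)}^2$ on $E_k(g_\Phi)$, and the summed identity forces this constant to vanish. Thus $\dot\tau_k(t_n)\to0$ along any sequence $t_n\to0$ of differentiability points; combined with local Lipschitz continuity of $t\mapsto\tau_k(g(t))$ this upgrades to $\tau_k(g(t))=\tau_k(g_\Phi)+o(t)$ and hence $\bar\tau_k(g(t))=\bar\tau_k(g_\Phi)+o(t)$, stronger than either one-sided inequality required by Definition~\ref{def:extremal}.

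The main obstacle I anticipate is this last step of the converse: extracting $\dot\tau_k\to0$ from the vanishing of $\sum_i Q^0_{\dot g(0)}(u_i)$ requires carefully exploiting the somewhat hidden ``constant on the unit sphere of $E_k$'' content of Lemma~\ref{lem:Derivative}, together with continuity of eigenspaces in $t$ and a compactness extraction showing that every subsequential limit of $\dot\tau_k(t_n)$ equals $Q^0_{\dot g(0)}(u_0)$ for every prescribed normalized $u_0\in E_k(g_\Phi)$, and hence equals a single common value. Everything else is a direct adaptation of the corresponding steps in Theorems~\ref{thm:Crit_S1_2} and~\ref{thm:CritMin2}.
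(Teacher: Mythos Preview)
Your forward direction is correct and is exactly what the paper does: for conformal variations $\dot g(0)=\omega g_0$, the interior term $\int_M\langle\tau(u),\omega g_0\rangle\,dv_{g_0}$ vanishes by trace-freeness of $\tau(u)$ in dimension two, so Lemma~\ref{lem:crit_eigenf} reduces to a statement purely on $L^2(\Gamma)$, and the Hahn--Banach/convex-hull argument is run there with
\[
\mathcal{K}=\operatorname{conv}\Bigl\{\tfrac{\tau_k(g_0)}{2}\,u^2|_\Gamma : u\in E_k(g_0)\Bigr\}\subset L^2(\Gamma),
\]
precisely as the paper indicates.

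There is, however, a genuine gap in your converse. You claim that $Q^0_{\dot g(0)}$ is a constant multiple of $\|\cdot\|_{L^2(\Gamma)}^2$ on all of $E_k(g_\Phi)$, and justify this by ``propagating'' the constancy at nearby differentiability points through eigenspace continuity. This does not work: for $t\neq 0$ the eigenspace $E_k(g(t))$ is generically one-dimensional even when $E_k(g_\Phi)$ is large, so the constancy of $Q^t_{\dot g(t)}$ on the unit sphere of $E_k(g(t))$ carries no information in the limit. Concretely, your compactness extraction produces, for each sequence $t_n\to 0$, \emph{some} limit $u_*\in E_k(g_\Phi)$ with $\dot\tau_k(t_n)\to Q^0_{\dot g(0)}(u_*)$; it does \emph{not} let you prescribe $u_*$. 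Different sequences yield different $u_*$, and $Q^0_{\dot g(0)}$ need not be constant on the unit sphere of $E_k(g_\Phi)$: for instance, if the eigenvalue splits under the perturbation, the branches have distinct slopes, each realized as $Q^0_{\dot g(0)}$ at different directions in $E_k(g_\Phi)$.

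You do not need this strong claim. The paper's route (as in the converse of Theorem~\ref{thm:Crit_S1_2}) is to observe that in the conformal case the identity $\sum_i Q^t_{\dot{\tilde g}(t)}(u_i)=0$ holds for \emph{all} small $t$ (the interior part vanishes because $\dot{\tilde g}(t)$ is pure trace, the boundary part because $\sum_i u_i^2\equiv 1$ and $\int_\Gamma\dot{\tilde g}(t)(T,T)\,ds=0$ by length normalisation). Hence at each $t$ there exist indices with $Q^t_{\dot{\tilde g}(t)}(u_{i_\pm})\lessgtr 0$, and passing $t\to 0^\pm$ one extracts $u^1,u^2\in E_k(g_\Phi)$ with $Q^0_{\dot g(0)}(u^1)\le 0\le Q^0_{\dot g(0)}(u^2)$. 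From this the \emph{one-sided} estimate $\bar\tau_k(g(t))\le\bar\tau_k(g_\Phi)+o(t)$ follows, which is all Definition~\ref{def:extremal} requires. Your attempt to upgrade to a two-sided $o(t)$ is both unnecessary and, as argued above, unjustified by the tools at hand.
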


\begin{proof}
The proof follows similarly to that of Theorem \ref{thm:Crit_S1_2}, with instead
\[
\mathcal{K} = \operatorname{conv} \left\{\left. \frac{\tau_k(g_0)}{2} u^2|_\Gamma \,\right|\,\, u \in E_k(g_0) \right\} \subset L^2(\Gamma),
\]
and is therefore omitted. The converse is similar as well.
\end{proof}

\subsection{Rotationally symmetric case on $\mathbb{S}^2$}

In this section we specify to the setting of Theorem~\ref{thm:rot_symm}, namely to the first non zero eigenvalue on the rotationally symmetric metrics on $\mathbb{S}^2$ with $\Gamma$ consisting of $N$ parallels. 

\begin{theorem}\label{thm:Crit_S1'} 
Let \( \Gamma^{(N)} \) be a union of \( N >1\) parallels on \( \mathbb{S}^2 \). Suppose that \( g_N \) is a \( \bar{\tau}_1 \)-critical metric invariant under the action of \( \mathbb{S}^1 \) by rotations about the \( z \)-axis. Then, for the corresponding equivariant free curve minimal immersion by $\tau_1(\Sp,\Gamma^{(N)},g_N)$-eigenfunctions
\[
\Phi_N \colon (\mathbb{S}^2,\Gamma^{(N)}, g_N) \to \mathbb{B}^{n+1},
\]
one has $n=2$ with $\mathbb{S}^1$ acting on $\R^3$ by the standard rotation around the $z$-axis. 
\end{theorem}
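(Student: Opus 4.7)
The plan is to combine the multiplicity bound from Lemma \ref{Lem_multiplicity} with an equivariant conformal rigidity argument on the annular components of $\Sp \setminus \Gamma^{(N)}$. Applying Theorem \ref{thm:Crit_S1_2} to the cone of $\mathbb{S}^1$-invariant metrics realises the components of $\Phi_N$ as a basis of an $\mathbb{S}^1$-invariant subspace $W \subseteq E_1$, where $E_1$ denotes the $\tau_1(g_N)$-eigenspace; the representation $A$ is then conjugate to the $\mathbb{S}^1$-action on $W$ by pullback of functions. By the proof of Lemma \ref{Lem_multiplicity}, the only $\mathbb{S}^1$-irreducibles appearing in $E_1$ are the trivial representation (with multiplicity at most one, spanned by an axially symmetric function $b(z)$) and the weight-one rotation representation (occurring at most once, spanned by a pair $\{a(z)\cos\theta,\,a(z)\sin\theta\}$). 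Thus $n+1 = \dim W \leq 3$, and the only possibilities for the isotypic decomposition of $W$ are: purely trivial ($n+1=1$), purely weight-one ($n+1=2$), or trivial $\oplus$ weight-one ($n+1=3$).

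The main work is to rule out $n+1 \leq 2$. If $W$ is purely trivial, then every component of $\Phi_N$ depends only on $z$, so $d\Phi_N$ has rank at most one everywhere; this is incompatible with conformality of $\Phi_N$ as a non-constant map whose image is a $2$-dimensional minimal surface. The delicate case is $n+1 = 2$ with $W$ purely weight-one: after an appropriate choice of orthonormal basis of $\mathbb{R}^2\cong \mathbb{C}$, $\mathbb{S}^1$-equivariance yields
\[
\Phi_N(z,\theta) = a(z)\,e^{i\theta}.
\]
The hypothesis $N>1$ guarantees the existence of at least one annular component $A_i \subset \Sp\setminus\Gamma^{(N)}$ bordered by two distinct parallels. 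On $\overline{A_i}$, the rotational fibre function of $g_N$ is strictly positive, so isothermal coordinates $(s,\theta)$ exist on $A_i$ with $s$ ranging over a bounded interval $[s_1,s_2]$ and $g_N|_{A_i} = e^{2u(s)}(ds^2+d\theta^2)$. The restriction $\Phi_N|_{A_i}$ is then a conformal harmonic map into $\mathbb{C}$, hence holomorphic or antiholomorphic in $w = s+i\theta$, and $\mathbb{S}^1$-equivariance collapses this to $\Phi_N = Ce^{\pm w}$ for some $C\in\mathbb{R}\setminus\{0\}$, so $|a(s)| = |C|\,e^{\pm s}$ is strictly monotone on $[s_1,s_2]$. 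This contradicts the free curve boundary constraint $\Phi_N(\Gamma^{(N)}) \subset \bd \mathbb{B}^2$, which forces $|a(s_1)| = |a(s_2)| = 1$.

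It then follows that $n+1 = 3$ and $W$ is the direct sum of both isotypic components. In a suitable orthonormal basis of $\mathbb{R}^3$, the representation $A$ acts trivially on the one-dimensional trivial-isotypic axis and by the standard $2$D rotation on the orthogonal weight-one plane, which is precisely the standard rotation of $\mathbb{R}^3$ about the $z$-axis. The principal technical hurdle is the equivariant holomorphicity step in the $\dim W = 2$ case: extracting the exponential form $a(s) = Ce^{\pm s}$ from conformality and harmonicity on an annular component, and combining it with the free curve boundary condition to derive the desired contradiction. The hypothesis $N>1$ is indispensable here, since $\Sp \setminus \Gamma^{(N)}$ otherwise contains no annular component.
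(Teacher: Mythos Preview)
Your proof is correct and follows essentially the same structure as the paper's: bound the multiplicity via Lemma~\ref{Lem_multiplicity}, use equivariance to list the possible isotypic decompositions of the span of the coordinate functions, and rule out the low-dimensional cases by conformality on an annular component of $\Sp\setminus\Gamma^{(N)}$. The only notable difference is in the $n+1=2$ weight-one case: the paper observes directly that $a(z_1)=a(z_2)=\pm1$ forces, by Rolle's theorem, a point $z_0$ with $a'(z_0)=0$, whence $d\Phi_N(\partial_z)|_{z_0}=0$ violates conformality; your holomorphicity argument reaches the same contradiction via the strict monotonicity of $|a(s)|=|C|e^{\pm s}$, a slightly longer but equally valid route.
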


\begin{proof}
First, observe that by the multiplicity bounds of Lemma~\ref{Lem_multiplicity} one has that $n\leq 2$. Let us now show $n=2$. 

    Since the immersion $\Phi_N$ is equivariant, its coordinate functions span an invariant subspace of the $\tau_1$-eigenspace and, hence, is also decomposed into irreducible representations. Suppose that $n=0$, then the only possibility is that the coordinate function of $\Phi_N$ is $\mathbb{S}^1$-invariant and, hence, it can not be conformal as $d\Phi_N(\bd_\theta)\equiv 0$. If $n=1$, then either the coordinate functions form a two-dimensional $\mathbb{S}^1$-invariant subspace -- which is impossible for the same reason -- or the coordinate functions span the space $\{a(z)\cos \theta, a(z)\sin \theta\}$. Since $\Phi_N$ is equivariant, we in fact have that up to a rotation in $\R^2$ one has $\Phi_N = (a(z)\cos \theta, a(z)\sin \theta)$. Let $U\subset \Sp\setminus\Gamma^{(N)}$, $U =\{z_1<z<z_2\}$ be any connected component homeomorphic to an annulus. One has $a(z_1) = a(z_2) = \pm 1$, hence, there exists $z_0$ such that $a'(z_0) = 0$ and, in particular, for $z=z_0$ one has $d\Phi_N(\bd_z) = 0$, contradicting conformality of $\Phi_N$.

    Thus, we conclude that $n=2$, in which case the coordinates of $\Phi_N$ span $\tau_1$-eigenspace, and the only possible decomposition of this $3$-dimensional space into irreducible representations of $\mathbb{S}^1$ is one copy of trivial representation and one copy of two-dimensional representation corresponding to $j=1$. Indeed, it cannot be decomposed into three copies of a trivial representation, as otherwise $\Phi_N$ could not be  conformal.
    As a result, up to a rotation of $\R^3$ one has
    \[
    \Phi_N(z,\theta) = (a_N(z)\cos\theta, a_N(z) \sin\theta, b_N(z)),
    \]
     which is the standard representation of $\mathbb{S}^1$ on $\R^3$ by rotations around an axis.
\end{proof}

\begin{corollary}
\label{cor:FCMS_uniqueness} Let \( \Gamma^{(N)} \) be a union of \( N >1\) parallels on \( \mathbb{S}^2 \). Suppose that \( g_N \) is a \( \bar{\tau}_1 \)-critical metric invariant under the action of \( \mathbb{S}^1 \) by rotations about the \( z \)-axis. Then, the corresponding map $\Phi_N$ is injective and 
the image $\Phi_N(\Sp)$ is a unique (up to an orthogonal transformation of $\R^3$) collection of disks and catenoids with boundaries on $\Sp$ meeting at equal and opposite angles at their common boundaries.
One has  
\[
\lim_{N\to\infty}\mathrm{Area}(\Phi_N(\Sp))= 4\pi.  
\] 
Furthermore, for each $N\geq 1$, for any other $\bar{\tau}_1$-critical metric $h$ invariant under the action of \( \mathbb{S}^1 \) by rotations about the \( z \)-axis, there exists a function $\omega\in C^\infty(\Sp)$, equal to a constant on $\Gamma^{(N)}$, such that $h = e^{2\omega}g_N$. 
\end{corollary}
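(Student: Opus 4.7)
The plan is to combine Theorem~\ref{thm:Crit_S1'} with the classification of rotationally symmetric free curve minimal surfaces in $\B^3$ carried out in~\cite{KZ} (summarised in Appendix~\ref{app2}). By Theorem~\ref{thm:Crit_S1'}, up to an orthogonal transformation of $\R^3$ there exists an equivariant conformal immersion
\[
\Phi_N(z,\theta)=(a_N(z)\cos\theta,\,a_N(z)\sin\theta,\,b_N(z))\colon(\Sp,\Gamma^{(N)},g_N)\to\B^3,
\]
whose components are $\tau_1(g_N)$-eigenfunctions and whose image is a minimal surface of revolution in $\B^3$ meeting $\bd\B^3$ along exactly $N$ parallels. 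On each connected component of $\Sp\setminus\Gamma^{(N)}$, the Euler classification of rotationally symmetric minimal surfaces will force the profile $z\mapsto(a_N(z),b_N(z))$ either to trace a catenary, in which case the image is a piece of a catenoid, or to lie on the $z$-axis, in which case $a_N\equiv 0$ and the image is a horizontal flat disk. Along each parallel in $\Phi_N(\Gamma^{(N)})$, the free curve condition $d\Phi_N(n_1)+d\Phi_N(n_2)\perp\bd\B^3$ is precisely the requirement that the two adjoining pieces meet the sphere at equal and opposite angles, so $\Phi_N(\Sp)$ is a balanced stacked catenoid/disk configuration in the sense of~\cite{KZ}.

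Next, I will invoke the main result of~\cite{KZ} (Appendix~\ref{app2}): for each $N\ge 1$ there is, up to $O(3)$, a unique such balanced configuration, which we call $\mathcal{C}(N)$. Thus $\Phi_N(\Sp)=\mathcal{C}(N)$, which is the asserted uniqueness of the image. Strict monotonicity of $b_N$ from Proposition~\ref{prop:b_N_monotone}, combined with the standard injectivity of $\theta\mapsto(a_N(z)\cos\theta,a_N(z)\sin\theta)$ on the slices where $a_N(z)>0$ and the collapse to a single point of the slices where $a_N(z)=0$, gives injectivity of $\Phi_N$. The limit $\mathrm{Area}(\mathcal{C}(N))\to 4\pi$ as $N\to\infty$ is likewise part of the analysis in~\cite{KZ}.

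For the conformal uniqueness of the metric, let $h$ be a second $\mathbb{S}^1$-invariant $\bar\tau_1$-critical metric with associated immersion $\Phi_h$; Theorem~\ref{thm:Crit_S1'} furnishes conformal factors $\omega_h,\omega_N$ constant on $\Gamma^{(N)}$ with $h=e^{2\omega_h}g_{\Phi_h}$ and $g_N=e^{2\omega_N}g_{\Phi_N}$. By the image uniqueness, after applying some $A\in O(3)$ the maps $\Phi_h$ and $\Phi_N$ parametrise the same surface $\mathcal{C}(N)$, so they differ by an $\mathbb{S}^1$-equivariant conformal self-diffeomorphism $\psi$ of $\Sp$ preserving $\Gamma^{(N)}$. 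The equivariance of $\psi$ and the fact that each component of $\Gamma^{(N)}$ is a single $\mathbb{S}^1$-orbit force $\psi^*g_{\Phi_N}$ to differ from $g_{\Phi_N}$ by a conformal factor that is constant on each component of $\Gamma^{(N)}$, and combining the three relations then yields $h=e^{2\omega}g_N$ with $\omega$ constant on $\Gamma^{(N)}$.

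The hard part is the uniqueness of the balanced configuration $\mathcal{C}(N)$, which I will import wholesale from~\cite{KZ}; everything else is direct assembly of ingredients already established in the excerpt, with the only mildly delicate point being the equivariant reparametrisation step at the very end.
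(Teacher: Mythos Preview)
Your approach matches the paper's: Theorem~\ref{thm:Crit_S1'} gives the rotationally symmetric form of $\Phi_N$, the classification of minimal surfaces of revolution identifies each piece, the free curve condition gives the balanced-angle property, and then~\cite{KZ} (Appendix~\ref{app2}) supplies both uniqueness of $\mathcal C(N)$ and the area limit, with injectivity coming from Proposition~\ref{prop:b_N_monotone}. Two points need correction or completion.

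First, your description of the planar case is wrong. A horizontal flat disk does \emph{not} correspond to $a_N\equiv 0$; that would collapse the whole region to a segment on the rotation axis. The disk case is $b_N\equiv\text{const}$ on that component, with $a_N$ ranging from $0$ at the pole to its boundary value on $\Gamma$. The reason the two cap regions $\{z<z_1\}$ and $\{z>z_N\}$ must be flat disks rather than catenoid pieces is topological/conformal: these regions are disks, so the profile curve must meet the rotation axis, and a catenary never does (equivalently, $r_{a,b}(t)\to\infty$, never $0$). Conversely, you should also say why the interior annular pieces cannot be flat: a horizontal plane meets $\mathbb S^2$ in a single circle, so a flat annulus cannot have both of its boundary circles on $\mathbb S^2$. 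The paper spells out both of these points in Appendix~\ref{app2}.

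Second, for the metric uniqueness the paper is much terser, simply citing uniqueness of the image together with the criticality characterisation. Your reparametrisation argument is a reasonable way to unpack that sentence and is essentially correct, but as written it only yields $\omega$ locally constant on $\Gamma^{(N)}$. To get a single constant, note that for any critical metric $g_0$ one has $g_0|_\Gamma = \tau_1(g_0)^{-2}\,g_{\Phi}|_\Gamma$ (compare the eigenvalue equations in the two conformal metrics), so once $g_{\Phi_N}|_\Gamma=g_{\Phi_h}|_\Gamma$ is established, the ratio $h/g_N$ on $\Gamma$ is the single constant $(\tau_1(g_N)/\tau_1(h))^2$.
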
\color{black} 
\begin{proof}
The first assertion follows from the fact that plane and catenoid are the only rotationally symmetric minimal surfaces in $\R^3$. The angle condition is an immediate consequence of conformality and the Steklov transmission eigenvalue equation $(\bd_{n_1}+\bd_{n_2})\Phi_N = \tau_1\Phi_N$.
    
The uniqueness and the area convergence follow from the results of~\cite{KZ}, where the authors investigated the so-called balanced configurations of catenoids, which are exactly the objects described in the first part of the Corollary. In Appendix~\ref{app2} we summarize the arguments of~\cite{KZ} and show how they imply the uniqueness of $\Phi_N(\Sp)$. The uniqueness of the image 
together with Theorem~\ref{thm:CritMin2} completes the characterisation of all $\bar\tau_1$-critical metrics.
\end{proof}

\section{Existence of Rotationally Symmetric Maximizers} \label{sec:existence}

In this section we prove the existence statement of Theorem~\ref{thm:rot_symm}. In order to do that it will be convenient for us to transport the problem to the cylinder $\mathbb{S}^1\times\R$, which is to be conformally equivalent to $\Sp\setminus\{N,S\}$, where $N,S$ are the north and south pole respectively, via the map $\pi\colon\mathbb{S}^1\times\R\to\Sp\setminus\{N,S\}$  given by
\[
\pi(\theta,z) = \pi_{st}^{-1}(e^{z+2\pi i\theta} ),
\]
where $\pi_{st}\colon \Sp\to\mathbb{C}$ is the stereographic projection. Since $\pi$ is conformal, it preserves the Dirichlet integral and, therefore, we can express $\tau_1(\Sp,\Gamma,\mu)$ in terms of coordinates on $\mathbb{S}^1\times\R$.
In these new coordinates
\[
\pi^{-1}(\Gamma) = \bigcup_{k=1}^N \Gamma_k, \quad \Gamma_k := \{z=z_k\} \subset \mathbb{S}^1\times\R,
\]
where $z_i$ are increasing. Since the vertical translations $z\mapsto z+z_0$ are conformally invariant, this configuration is characterised only in terms of vertical spacings between consecutive curves. To that end we define \( \alpha = (\alpha_1, \dots, \alpha_{N-1}) \in (0, \infty)^{N-1} \) by $\alpha_i:=z_{i+1}-z_i$. Let \( \beta = (\beta_1, \dots, \beta_N) \in \Delta^{N-1}\subset(0, \infty)^N \), $\sum_{i=1}^N\beta_i = 1$ be the weight of $\pi^*\mu$ on $\Gamma_i$.

Note that $C_0^\infty(\Sp\setminus\{N,S\})$ is dense in $H^1(\Sp)$, hence, for $(\alpha,\beta)$ as before, the eigenvalue of the measure $\pi^*\mu$ equals
\[
\mathcal{F}_N(\alpha, \beta) := 
\inf_{\substack{u \in \mathcal{C}_0^\infty(\mathbb{S}^1 \times \mathbb{R}) \\ u \neq 0,\ \int_\Gamma u \rho\, ds_{g_{0}} = 0}} 
\frac{\int_{\mathbb{S}^1 \times \mathbb{R}} |\nabla u|^2_{g_{0}} \, dv_{g_0}}{\int_\Gamma u^2 \rho \, ds_{g_{0}}},
\]
where $\rho=\beta_i$ on $\Gamma_i$. Note that this expression makes sense even if some of $\beta_i$ vanish, so it makes sense to consider $\mathcal F_N$ as a function on $\R_{>0}^{N-1}\times \overline{\Delta}^{N-1}$.

In particular, one has
\[
T_1(N) = \sup_{\alpha,\beta}\mathcal{F}_N(\alpha,\beta)
\]
and we aim to show that the supremum is achieved for some $(\alpha,\beta)\in \R_{>0}^{N-1}\times \Delta^{N-1}$.
The proof is split in several steps. First, we show that for each fixed $\alpha$ the quantity 
\[
T_1(N,\alpha) := \sup_{\beta}\mathcal{F}_N(\alpha,\beta)
\]
is achieved and for any maximizing $\beta$ one has $\beta_i>0$ for all $i=1,\ldots, N$. Then we study $T_1(N,\alpha)$ as a function of $\alpha$ and, in particular show that for any sequence $\alpha_i$ escaping to the boundary of $(0, \infty)^{N-1}$ one has that $\limsup\limits_{i\to\infty}T_1(N,\alpha)\leq T_1(N-1)$. Finally, we combine these two facts in an inductive argument to finish the proof of Theorem~\ref{thm:rot_symm}. Overall, the strategy of proof is analogous to the strategy for the proof of existence of maximizers for other eigenvalue functionals (cf.~\cite{Petrides1}), although it is much less technically demanding in our case.

\subsection{Maximizers of $T_1(N,\alpha)$} 

The goal of this section is to prove the following theorem.
\begin{theorem}
    \label{thm:conf_exist}
    For any \( N \in \mathbb{N}^* \) and any $\alpha\in (0,+\infty)^{N-1}$ there exists $\beta\in (0,+\infty)^N$ such that 
    \[
    T_1(N,\alpha) = \mathcal F(\alpha,\beta).
    \]
    Furthermore, if $\beta'\in [0,+\infty)^N$ is such that $\beta'_i = 0$ for some index $i$, then $\mathcal F_N(\alpha,\beta')<T_1(N,\alpha).$
\end{theorem}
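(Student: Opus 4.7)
The plan has two parts: existence via continuity and compactness, and the strict inequality via a perturbation argument.

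For existence, we restrict to the compact simplex $\overline{\Delta}^{N-1}$ using scale-invariance, and claim that $\beta \mapsto \mathcal{F}_N(\alpha,\beta)$ is continuous there. For $\beta^n \to \beta$ and any $\phi\in W^{1,p}(M)$ with $p\in(1,2)$, the trace inequality $\bigl|\int_{\Gamma_j}\phi\,ds\bigr|\leq C\|\phi\|_{W^{1,p}(M)}$ yields
\[
\|\mu_{\beta^n}-\mu_\beta\|_{(W^{1,p}(M))^*}\leq C\sum_j|\beta^n_j-\beta_j|\to 0,
\]
so Theorem~\ref{thm_eigen_conv} gives the desired continuity, and compactness of $\overline{\Delta}^{N-1}$ produces a maximizer.

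For the strict inequality, we argue by contradiction. Suppose a maximizer $\beta^*$ satisfies $\beta^*_i = 0$ for some $i$, and set $\mu^*=\mu_{\beta^*}$. Let $u$ be a unit $\tau_1(\mu^*)$-eigenfunction; since $\beta^*_i=0$, there is no transmission jump at $\Gamma_i$, so $u$ is harmonic in a neighbourhood of $\Gamma_i$. A Lipschitz perturbation analysis analogous to Lemma~\ref{lem:Derivative} gives, along any admissible direction $\dot\beta$ (with $\sum_j\dot\beta_j=0$ and $\dot\beta_j\geq 0$ whenever $\beta^*_j=0$), a one-sided derivative of $\tau_1$ of the form
\[
\dot\tau_1 = -\tau_1(\mu^*)\sum_j\dot\beta_j\int_{\Gamma_j}u^2\,ds
\]
for an appropriate eigenfunction $u$ in the $\tau_1$-eigenspace. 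The plan is to select $\dot\beta$ making $\dot\tau_1$ strictly positive.

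The key inequality uses the $\mathbb{S}^1$-symmetry: the $\tau_1$-eigenspace decomposes into modes of the form $b(z)$, $a(z)\cos\theta$ and $a(z)\sin\theta$. On each strip of the cylinder lying between two consecutive active parallels, the radial profile satisfies $a''-m^2 a=0$ with $m\in\{0,1\}$, so $a^2$ is strictly convex in $z$ unless $a\equiv 0$; by unique continuation $a$ cannot vanish on an entire strip. Therefore
\[
\int_{\Gamma_i}u^2\,ds < \max_{j\text{ active}}\int_{\Gamma_j}u^2\,ds,
\]
and choosing $\dot\beta = e_i - e_k$ with $k$ an active index realizing the maximum gives $\dot\tau_1>0$, contradicting the maximality of $\beta^*$. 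The extremal cases in which $\Gamma_i$ lies beyond all active parallels are handled identically, since smoothness of $u$ at the corresponding pole of $\mathbb{S}^2$ forces $a$ to decay and $a^2$ to be strictly monotone in the half-strip. The main obstacle is the possible multiplicity of the $\tau_1$-eigenspace (at most $3$ by Lemma~\ref{Lem_multiplicity}), which upgrades the first-order derivative to a symmetric quadratic form on the eigenspace; the mode-wise strict convexity argument is designed to handle this precisely because each irreducible summand independently satisfies the required strict inequality.
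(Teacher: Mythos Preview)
Your existence argument via continuity on the closed simplex (trace bound plus Theorem~\ref{thm_eigen_conv}) and compactness is correct.

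The strict-inequality part has a genuine gap. In the exterior case, say $z_i>z_j$ for every active $j$, the mode-$0$ eigenfunction $b(z)$ is \emph{constant} beyond the last active parallel $\Gamma_{k_0}$ (a bounded harmonic function of $z$ alone on a half-cylinder is constant, not ``strictly monotone''), so $\int_{\Gamma_i}b^2=\int_{\Gamma_{k_0}}b^2$ and your direction $\dot\beta=e_i-e_{k_0}$ gives $Q(b)=0$, not $Q(b)>0$. If the eigenspace also contains mode~$1$, the minimum of $Q$ over the eigenspace is still $0$, and maximality is not contradicted at first order. Worse, the failure can be total: take $N=3$, $\beta^*=(\tfrac12,\tfrac12,0)$ and $\alpha_1$ large enough that $\tau_1$ is the simple mode-$0$ eigenvalue $4/\alpha_1$. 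Then $b(z_1)^2=b(z_2)^2=b(z_3)^2$, so \emph{every} admissible direction with $\sum_j\dot\beta_j=0$ gives $Q(b)=0$, and the first-order argument gives no information whatsoever. You would need a second-order analysis, which you have not provided. (In the interior case your specific choice $e_i-e_k$ can also fail when different modes attain their maxima at different endpoints; there the convex-combination direction $e_i-\lambda e_k-(1-\lambda)e_l$ with $z_i=\lambda z_k+(1-\lambda)z_l$ does work uniformly, by strict convexity of $z\mapsto\int_{\{z\}\times\mathbb S^1}u^2$, but this fix does not extend to the exterior case.)

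The paper avoids these issues by a genuinely different route: it passes to the dual Schr\"odinger problem, maximising $F(v)=\int_\Gamma v$ over $\mathcal P_1=\{v\in\mathbb R^N:\nu_1(\alpha,v)\ge0\}$, where $v$ is allowed to be negative. With the sign constraint removed, the criticality/Hahn--Banach argument (Proposition~\ref{prop:neg_critical}) yields eigenfunctions with $\sum u_i^2\equiv1$ on $\Gamma$, and the Hopf lemma applied to the subharmonic function $\sum u_i^2$ forces $v>0$ componentwise. Compactness (uniform upper bound plus reverse Fatou) then produces a strictly positive maximiser $v_{\max}$, and since $v_{\beta'}=\tau_1(\beta')\beta'$ has a zero component whenever $\beta'$ does, such $\beta'$ cannot be maximal.
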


We follow the strategy in~\cite{KNPP}, i.e. we interpret $T_1(N,\alpha)$ as a solution of an optimization problem for Schr\"odinger-type operators.

Throughout this section $\alpha$ is fixed and let $v\in C^\infty(\Gamma).$ 
For any integer $k \in \mathbb{N}$, define the $k$-th variational eigenvalue:
\[
\nu_k(\alpha,v) := \inf_{F_{k+1}} \sup_{0 \neq u \in F_{k+1}} 
\frac{
\int_{M} |\nabla u|^2_{g_0} \, dv_{g_{0}} - \int_{\Gamma} v u^2 \, ds_{g_{0}}
}{
\int_{M} u^2 \, dv_{g_{0}}
},
\]
where the infimum is taken over all $(k+1)$-dimensional subspaces $F_{k+1} \subset W^{1,2}(M, g_{0})$.
These eigenvalues satisfy the standard min-max principle:
\[
\nu_0(\alpha,v) < \nu_1(\alpha,v) \leq \nu_2(\alpha,v) \leq \cdots \nearrow +\infty.
\]
These are the eigenvalues of the following transmission-type spectral problem
\[
\left\{
\begin{array}{rcll}
\Delta_{g_{0}} u_k &=& \nu_k(\alpha,v)\, u_k & \text{in } M \setminus \Gamma, \\
\partial_{n_1} u_k + \partial_{n_2} u_k &=& v\, u_k & \text{on } \Gamma.
\end{array}
\right.
\]
In the following we consider rotationally symmetric potentials $v\equiv v_i\in\R$ on $\Gamma_i$ and identify $v$ with an element of $\R^N$.
Define the admissible set
\[
\mathcal{P}_k := \left\{ v \in \mathbb{R}^N : \nu_k(\alpha,v) \geq 0 \right\}
\]
and let 
\[
\mathcal{V}_k := \sup_{v \in \mathcal{P}_k} \int_{\Gamma} v \, ds_{g_{0}}
\]
\begin{definition}
Let $v \in \mathbb{R}^N$. We say that $v$ is a \emph{critical potential} for the functional
\begin{equation}
\label{def:F}
F(v) = \int_{\Gamma} v \, ds_{g_{0}}
\end{equation}
within the admissible set $\mathcal{P}_k$, if for every smooth variation $v(t) \in \mathcal{P}_k$, defined for $t \in [0, \varepsilon)$ with $v(0) = v$ one has 
\[
\int_{\Gamma} v(t) \, ds_{g_{0}} \leq \int_{\Gamma} v \, ds_{g_{0}} + o(t) \quad \text{as } t \to 0^+.
\]
\end{definition}
\begin{proposition}\label{prop:neg_critical}
Let $v \in \mathcal{P}_1$ be a critical potential for the functional $F(v)$ in $\mathcal{P}_1$. Then, there exists a free curve harmonic map 
\[
\Phi := (u_1, \dots, u_{n+1})\colon M \to \mathbb{B}^{n+1}
\]
by $\nu_1$-eigenfunctions such that
\[
\sum_{i=1}^{n+1} u_i^2 = 1 \quad \text{on } \Gamma, \quad \text{and} \quad \nu_1(v, \gamma) = 0.
\]
In particular, $v>0$ on $\Gamma$.
\end{proposition}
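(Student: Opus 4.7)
The proof follows the convex-separation strategy used in Section~\ref{sec:Crit_conditions}, applied to the $\nu_1$-functional on $\mathcal P_1$ rather than to $\bar\tau_k$. First, I would show that $\nu_1(\alpha, v) = 0$. If instead $\nu_1(\alpha, v) > 0$, continuity of $\nu_1$ in $v$ gives $v + \varepsilon\mathbf{1} \in \mathcal{P}_1$ for all sufficiently small $\varepsilon > 0$, while $F(v + \varepsilon\mathbf 1) = F(v) + \varepsilon\, \ell_{g_0}(\Gamma) > F(v)$ contradicts the criticality of $v$. Combined with $v \in \mathcal P_1$, this forces $\nu_1(\alpha, v) = 0$.

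Next comes a Hahn--Banach step to extract eigenfunctions. Let $E_1$ denote the (finite-dimensional) $\nu_1$-eigenspace. Standard perturbation theory, relying on the strict inequality $\nu_0 < 0 = \nu_1$, yields that the one-sided derivative $\partial_t^+ \nu_1(\alpha, v + tw)\big|_{t=0}$ equals the smallest eigenvalue of the quadratic form $u \mapsto -\int_\Gamma w u^2\, ds_{g_0}$ on $E_1$ (normalized in $L^2(M)$). Hence $v + tw \in \mathcal P_1$ for small $t \geq 0$ whenever $\int_\Gamma w u^2\, ds_{g_0} \leq 0$ for every $u \in E_1$, and criticality of $v$ thus forces
\[
\int_\Gamma w u^2\, ds_{g_0} \leq 0\ \ \forall u \in E_1 \ \Longrightarrow \ \int_\Gamma w\, ds_{g_0} \leq 0.
\]
Writing $\mathbf b(u)_j := \int_{\Gamma_j} u^2\, ds_{g_0}$ and $\mathbf L_j := \ell_{g_0}(\Gamma_j)$, this implication says that the polar cone of $\mathbf b(E_1)$ is contained in that of $\{\mathbf L\}$; by the bipolar theorem, $\mathbf L$ then lies in the closed convex cone $K$ generated by $\mathbf b(E_1)$. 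The closedness of $K$ will follow from unique continuation --- which ensures $\mathbf b(u) \neq 0$ for every nonzero $u \in E_1$ and in particular makes $\mathbf b(E_1 \cap \{\|\cdot\|_{L^2(M)}=1\})$ compact and bounded away from $0$. Carath\'eodory's theorem then produces $u_1, \dots, u_{n+1} \in E_1$ with $n+1 \leq N$ and $\sum_i \mathbf b(u_i) = \mathbf L$; in particular $\sum_i \int_{\Gamma_j} u_i^2\, ds_{g_0} = \ell_{g_0}(\Gamma_j)$ for every $j$.

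To upgrade this integrated equality to the pointwise identity $\sum_i u_i^2 \equiv 1$ on $\Gamma$, the plan is to exploit the $\mathbb S^1$-symmetry of the setup. The $\mathbb S^1$-action preserves $E_1$ and decomposes it into trivial one-dimensional and standard two-dimensional irreducible subspaces; choosing the $u_i$'s adapted to this decomposition, and including both generators of every two-dimensional summand with equal weights, makes $\sum_i u_i^2$ invariant under $\mathbb S^1$, hence constant on each $\Gamma_j$. Combined with the integrated equality above, this gives $\sum_i u_i^2 \equiv 1$ on $\Gamma$, and the eigenvalue equation $(\partial_{n_1} + \partial_{n_2}) u_i = v u_i$ shows that $\Phi = (u_1, \dots, u_{n+1})$ is a free curve harmonic map into $\mathbb B^{n+1}$.

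Finally, $v > 0$ on $\Gamma$ will follow from Hopf's lemma. Since each $u_i$ is harmonic on $M \setminus \Gamma$, the function $|\Phi|^2$ is subharmonic there and equals $1$ on $\Gamma$, so by the maximum principle $|\Phi|^2 \leq 1$ on $M$. Hopf's lemma, applied on each side of $\Gamma$, gives strict positivity of both outward normal derivatives $\partial_{n_1}|\Phi|^2, \partial_{n_2}|\Phi|^2 > 0$ at every point of $\Gamma$; the identity $(\partial_{n_1} + \partial_{n_2})|\Phi|^2 = 2\sum_i u_i (\partial_{n_1} + \partial_{n_2})u_i = 2v$ on $\Gamma$ then yields $v > 0$. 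The main technical hurdles I anticipate are verifying the closedness of $K$ (via unique continuation) and extracting from the Hahn--Banach output an $\mathbb S^1$-invariant set of eigenfunctions, which is what converts the integrated identity into a pointwise one.
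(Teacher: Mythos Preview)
Your proposal is correct and follows the same overall architecture as the paper (show $\nu_1=0$, Hahn--Banach/separation to extract eigenfunctions summing to $1$ on $\Gamma$, then Hopf for $v>0$), but you organise the symmetry step differently. The paper applies Hahn--Banach in $L^2(\Gamma)$ to the convex hull $\mathcal K=\mathrm{conv}\{u^2|_\Gamma:u\in E_1\}$, then \emph{averages the separating functional} $\xi$ over $\mathbb S^1$; since a rotationally invariant function on $\Gamma$ is automatically an element of $\mathbb R^N$, this immediately produces an admissible perturbation direction and yields the \emph{pointwise} equality $\sum_i u_i^2=1$ on $\Gamma$ in one stroke. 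You instead dualise in $\mathbb R^N$ from the start, obtain only the integrated identity $\sum_i\int_{\Gamma_j}u_i^2=\ell(\Gamma_j)$, and then must symmetrise the \emph{eigenfunctions} to upgrade to pointwise. That upgrade works (decompose each $u_i$ into $\mathbb S^1$-isotypic pieces and pair up the generators of each $2$-dimensional summand with equal weights; cross terms average to zero on each $\Gamma_j$, so the integrated identity is preserved), and is essentially the $\hat\Phi$-averaging trick from the proof of Theorem~\ref{thm:Crit_S1_2}. The paper's route is a little shorter because it avoids this post-processing.

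One small point to tighten: from ``$\int_\Gamma w u^2\le 0$ for all $u\in E_1$'' you only get that the one-sided derivative of $\nu_1$ is $\ge 0$, which does \emph{not} by itself guarantee $v+tw\in\mathcal P_1$ for small $t>0$ when the minimum of $-\int_\Gamma w u^2$ over the unit sphere of $E_1$ is exactly $0$. The paper handles this by perturbing the direction to make the inequality strict (replace $w$ by $w-\delta\mathbf 1$ for small $\delta>0$, using that $\int_\Gamma u^2>0$ for $0\ne u\in E_1$; note $\int_\Gamma w>0$ survives for small $\delta$). With that patch, your bipolar/Carath\'eodory argument goes through; the closedness of $K$ that you flag follows exactly as you say, since $\mathbf b(E_1\cap\{\|\cdot\|=1\})$ is compact in $[0,\infty)^N$ with $0\notin\mathrm{conv}(\cdot)$.
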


\begin{proof}
 The proof parallels that of Theorem~\ref{thm:Crit_S1_2}, with key differences highlighted. We begin by establishing that $\nu_1(v, \gamma) = 0$. Consider the variation $v(t) = v + t$. If $\nu_1(\alpha,v) > 0$, then by continuity, $\nu_1(\alpha, v(t)) > 0$ for small $t > 0$. However, the variation of the functional satisfies
\[
\int_\Gamma v(t) \, ds_{g_{0}} = \int_\Gamma v \, ds_{g_{0}} + t \cdot \ell_{g_{0}}(\Gamma) > \int_\Gamma v \, ds_{g_{0}} + o(t),
\]
which contradicts the criticality of $v$. Thus, $\nu_1(\alpha,v) = 0$.

For any smooth deformation $v(t)$, the map $t \mapsto \nu_1(\alpha,v(t))$ is Lipschitz (as in Lemma~\ref{lem:Derivative}). When differentiable, we have the standard variational formula
\[
\dot{\nu}_1(v(t)) = -\int_\Gamma \dot{v}(t) u_t^2 \, ds_{g_{0}},
\]
where $u_t$ is a corresponding eigenfunction normalized by $\|u_t\|_{L^2(M)} = 1$.

Let $E_1$ be the $\nu_1(\alpha,v)$-eigenspace, we claim that $1 \in \mathcal{K} := \mathrm{conv}\{ u^2|_\Gamma : u \in E_1 \}$. Suppose, for contradiction, that $1 \notin \mathcal{K}$. Then by the Hahn–Banach theorem, there exists $\xi \in L^2(\Gamma, g_{0})$ such that
\[
\int_\Gamma \xi \, ds_{g_{0}} > 0, \quad \text{and} \quad \int_\Gamma \xi u^2 \, ds_{g_{0}} \leq 0 \quad \forall u \in E_1.
\]

Define the rotationally invariant function $B(z) := \int_{\mathbb{S}^1} \xi(\theta, z) \, d\theta$, and note that
\[
\int_\Gamma B \, ds_{g_{0}} > 0, \quad \int_\Gamma B u^2 \, ds_{g_{0}} \leq 0 \quad \forall u \in E_1.
\]
Set $f := B - \frac{1}{\ell_{g_{0}}(\Gamma)} \int_\Gamma B \, ds_{g_{0}}$, which is again $\mathbb{S}^1$-invariant and satisfies $\int_\Gamma f \, ds_{g_{0}} = 0$. Then
\[
\int_\Gamma f u^2 \, ds_{g_{0}} = \int_\Gamma B u^2 \, ds_{g_{0}} - \frac{1}{\ell_{g_{0}}(\Gamma)} \int_\Gamma u^2 \, ds_{g_{0}} \int_\Gamma B \, ds_{g_{0}} < 0.
\]

Now consider the deformation $v(t) := v + t(f + \delta)$ for small $\delta > 0$. When differentiable, we get:
\[
\dot{\nu}_1(v(t)) = -\int_\Gamma (f + \delta) u_t^2 \, ds_{g_{0}} = -\int_\Gamma f u_t^2 \, ds_{g_{0}} - \delta \int_\Gamma u_t^2 \, ds_{g_{0}}>0.
\]

By compactness and smooth convergence $u_t \to u \in E_1(\Gamma, v)$ in $C^2$, and the above inequality implies $\dot{\nu}_1(v(t)) > 0$ for sufficiently small $t$, yielding
\[
\nu_1(v(t)) > \nu_1(v) = 0.
\]
Thus, $v(t) \in \mathcal{P}_1$, but
\[
\int_\Gamma v(t) \, ds_{g_{0}} = \int_\Gamma v \, ds_{g_{\text{std}}} + t \delta \cdot \ell_{g_{0}}(\Gamma) > \int_\Gamma v \, ds_{g_{0}} + o(t),
\]
contradicting the criticality of $v$. Hence, the claim $1 \in \mathcal{K}$ holds.

It remains to prove that $v > 0$ on $\Gamma$. Recall that the eigenfunctions satisfy the jump condition:
\[
\partial_{n_1} u_i + \partial_{n_2} u_i = v u_i \quad \text{on } \Gamma.
\]
Multiplying by $u_i$ and summing, we obtain:
\[
\sum_{i=1}^{n+1} u_i (\partial_{n_1} u_i + \partial_{n_2} u_i) = v \sum_{i=1}^{n+1} u_i^2 = v.
\]
Since $\sum_{i=1}^{n+1} u_i^2$ is subharmonic in $M \setminus \Gamma$, and equals $1$ on $\Gamma$, the Hopf maximum principle implies:
\[
\sum_{i=1}^{n+1} u_i \partial_{n_1} u_i = \frac{1}{2} \partial_{n_1} \left( \sum_{i=1}^{n+1} u_i^2 \right) > 0, \quad 
\sum_{i=1}^{n+1} u_i \partial_{n_2} u_i = \frac{1}{2} \partial_{n_2} \left( \sum_{i=1}^{n+1} u_i^2 \right) > 0.
\]
Thus, $v > 0$ on $\Gamma$, concluding the proof.
\end{proof}

 For $M > 0$, define the truncated quantities
\[
\mathcal{V}_1(M) := \sup_{v \in \mathcal{P}_1 \cap [-M, M]^N} \int_{\Gamma} v \, ds_{g_{0}}.
\]
Note that since $\mathcal{P}_1$ is closed, the set $\mathcal{P}_1 \cap [-M, M]^N$ is compact. By continuity of the linear functional $v \mapsto \int_{\Gamma} v \, ds_{g_{0}} = 2\pi \sum_{i=1}^N v_i$, it follows that there exists a maximizer $v(M) \in \mathcal{P}_1 \cap [-M, M]^N$ satisfying
\[
\int_{\Gamma^\gamma} v(M) \, ds_{g_{0}} = \mathcal{V}_1(M).
\]

\begin{proposition}
\label{prop:upper_bound_neg_egv}
There exists a constant $K > 0$ such that for any $v \in \mathcal{P}_1$ and for all $i \in \{1, \dots, N\}$, we have
\[
v_i \leq K.
\]
\end{proposition}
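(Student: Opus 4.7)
The plan is to argue by contradiction. Assuming that some $v_i$ can be made arbitrarily large, I will exhibit a two-dimensional subspace of $W^{1,2}(\Sp)$ on which the quadratic form
\[
Q_v(u) := \int_{\Sp} |\nabla u|^2_{g_0}\, dv_{g_0} - \int_{\Gamma} v\, u^2\, ds_{g_0}
\]
is negative-definite. By the min--max characterisation this forces $\nu_1(\alpha, v) < 0$, contradicting $v \in \mathcal{P}_1$.

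The key idea is to exploit the $\mathbb{S}^1$-symmetry of the configuration. Test functions built from distinct Fourier modes in the angular variable $\theta$ are automatically orthogonal in $L^2(\Sp, dv_{g_0})$, in the Dirichlet inner product, and -- crucially -- in the weighted boundary pairing $(u,u')\mapsto \int_\Gamma v\, u u'\, ds_{g_0}$ (the last using rotational invariance of $v$). Consequently, once each of two such test functions has negative Rayleigh quotient individually, $Q_v$ is negative-definite on their span without any further cross-term analysis.

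Concretely, fix $i$ and choose $r>0$ small enough that $(z_i-r, z_i+r)$ contains no $z_k$ with $k\neq i$. Pick $\phi \in C_c^\infty((z_i-r, z_i+r))$ with $\phi(z_i)=1$ and define, in the cylindrical coordinates $(\theta,z)$,
\[
u_1(\theta,z) := \phi(z), \qquad u_2(\theta,z) := \phi(z)\cos\theta,
\]
extended by zero across the poles so as to lie in $W^{1,2}(\Sp)$. Using the conformal invariance of the Dirichlet energy in dimension two to reduce to the cylinder with the flat metric, a direct calculation yields
\[
Q_v(u_1) = 2\pi\!\int_{\R} |\phi'|^2\, dz - v_i\, \ell_{g_0}(\Gamma_i), \qquad
Q_v(u_2) = \pi\!\int_{\R}\!(|\phi'|^2 + \phi^2)\, dz - \tfrac{1}{2}\,v_i\, \ell_{g_0}(\Gamma_i),
\]
while the three cross terms between $u_1$ and $u_2$ all vanish by $\int_0^{2\pi}\!\cos\theta\, d\theta = 0$ (the boundary cross term uses, additionally, that $\phi$ is supported only near $\Gamma_i$).

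Therefore, as soon as $v_i$ exceeds the explicit threshold
\[
K_i := \frac{2\pi}{\ell_{g_0}(\Gamma_i)} \int_{\R}\!(|\phi'|^2 + \phi^2)\, dz,
\]
both $Q_v(u_1)$ and $Q_v(u_2)$ are strictly negative, whence $Q_v(au_1 + bu_2) = a^2 Q_v(u_1) + b^2 Q_v(u_2) < 0$ for every $(a,b)\neq(0,0)$. Setting $K := \max_{1\leq i \leq N} K_i$ -- a finite constant depending only on $\alpha$ and on the fixed choices of $\phi$ near each $z_i$ -- yields the required uniform upper bound. No serious obstacle arises; the entire argument is a symmetry-assisted computation in which the Fourier decomposition in $\theta$ does the work that would otherwise require a careful cross-term analysis.
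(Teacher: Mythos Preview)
Your proof is correct and follows essentially the same approach as the paper: both pick the two-dimensional test space spanned by a function equal to $1$ on $\Gamma_i$ and a function equal to $\cos\theta$ on $\Gamma_i$, vanishing on the other $\Gamma_j$, and use the min--max characterisation of $\nu_1$ to bound $v_i$. The only difference is packaging---you phrase it as a contradiction and exploit the Fourier orthogonality to diagonalise $Q_v$ on the span, whereas the paper argues directly that the maximiser $u^i=au_1^i+bu_2^i$ over this span must satisfy $Q_v(u^i)\ge0$ and bounds the Dirichlet energy by $(a^2+b^2)K_i$ via Cauchy--Schwarz; these are contrapositives of the same estimate.
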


\begin{proof}
We start from the inequality that is valid for any $v\in\mathcal P_1$
\[
0 \leq \nu_1(\alpha,v) = \inf_{\substack{V \subset W^{1,2}(M) \\ \dim V = 2}} \sup_{\substack{u \in V \\ u \neq 0}} \frac{\int_{M} |\nabla u|^2_{g_{0}} \, dv_{g_{0}} - \int_{\Gamma} v u^2 \, ds_{g_{0}}}{\int_{M} u^2 \, dv_{g_{0}}}.
\]

Fix $i \in \{1, \dots, N\}$, and let $u_1^i$, $u_2^i$ be two test-functions with the following properties
\begin{itemize}
  \item[(1)] $u_1^i = 1$ and $u_2^i = \cos \theta$ on $\Gamma_i$;
  \item[(2)] $u_1^i$ and $u_2^i$ vanish identically on all other $\Gamma_j$ for $j \neq i$.
\end{itemize}

Let $V = \mathrm{span}\{u_1^i, u_2^i\}$, and let $u^i = a u_1^i + b u_2^i \in V$ be such that
\[
\frac{\int_{M} |\nabla u^i|^2_{g_{0}} \, dv_{g_{0}} - \int_{\Gamma} v (u^i)^2 \, ds_{g_{0}}}{\int_{M} (u^i)^2 \, dv_{0}} = \sup_{\substack{u \in V \\ u \neq 0}} \frac{\int_{M} |\nabla u|^2_{g_{0}} \, dv_{g_{0}} - \int_{\Gamma^\gamma} v u^2 \, ds_{g_0}}{\int_{M} u^2 \, dv_{g_{0}}}.
\]

We now estimate both sides of this inequality. Since $u_1^i$ and $u_2^i$ are supported away from  $\Gamma_j$ for $j \neq i$, we have
\[
\int_{\Gamma_i} v_i (a + b \cos \theta)^2 \, d\theta \leq \int_{M} |\nabla u^i|^2_{g_{0}} \, dv_{g_{0}} \leq (a^2 + b^2) K_i,
\]
for some constant $K_i > 0$ depending on the energy of $u_1^i$ and $u_2^i$ (by Cauchy–Schwarz and bilinearity). On the other hand,
\[
\int_{\Gamma_i} v_i (a + b \cos \theta)^2 \, d\theta = v_i \int_0^{2\pi} \left( a^2 + 2ab \cos \theta + b^2 \cos^2 \theta \right) \, d\theta = v_i (2\pi a^2 + \pi b^2).
\]

Combining the two expressions, we get
\[
v_i (2\pi a^2 + \pi b^2) \leq K_i (a^2 + b^2),
\]
which implies
\[
v_i \leq \frac{K_i (a^2 + b^2)}{2\pi a^2 + \pi b^2} \leq K_i',
\]
for some constant $K_i' > 0$ independent of $v$.
Finally, define $K := \max\{K_i' : i = 1, \dots, N\}$, which gives the desired uniform upper bound on each component $v_i$ for all $v \in \mathcal{P}_1$.
\end{proof}

\begin{proof}[Proof of Theorem~\ref{thm:conf_exist}]
We first argue that the functional $F(v)$ defined by~\eqref{def:F} achieves its maximum on $\mathcal P_1$ and, hence, for the maximizing potential $v_{\max}$ one has $v_{\max}>0$ by Proposition~\ref{prop:neg_critical}. 

By Proposition~\ref{prop:upper_bound_neg_egv}, there exists a constant \( K > 0 \) such that the maximizing potentials \( v(M) \in \mathcal{P}_1 \cap [-M, M]^N \) satisfy \( v_i(M) \leq M \) for all \( M \in \mathbb{N} \) and all \( i = 1, \ldots, N \).
Now consider the sequence \( \mathcal{V}_1(M) = \int_{\Gamma^\gamma} v(M) \, ds_{g_{0}} \). Since the potentials are uniformly bounded from above, we may apply the reverse Fatou lemma to deduce
\[
\mathcal{V}_1 = \lim_{M \to \infty} \mathcal{V}_1(M) = \limsup_{M \to \infty} \int_{\Gamma^\gamma} v(M) \, ds_{g_{0}} 
\leq \int_{\Gamma} \limsup_{M \to \infty} v(M) \, ds_{g_{0}}.
\]
Define \( v_{\max} := \limsup_{M \to \infty} v(M) \). By passing to a suitable subsequence, one has \( v(M) \to v_{\max}\) component-wise. Moreover, as \( \mathcal{V}_1 \geq 0 \), each component \( v_{\max,i} > -\infty \). Since each \( v(M) \in \mathcal{P}_1 \), and this set is closed, we deduce \( v_{\max} \in \mathcal{P}_1 \), and therefore, $\int_{\Gamma} v_{\max} \, ds_{g_{0}} \leq \mathcal{V}_1$. It follows that
\[
\int_{\Gamma} v_{\max} \, ds_{g_{0}} = \mathcal{V}_1.
\]
Furthermore, by Proposition~\ref{prop:neg_critical} one has $v_{\max}>0$ and any $v\in\mathcal P_1$ with non-positive components can not be maximizing, so that if $v_i\leq 0$ for some $i=1,\ldots, N$, then $F(v)<\mathcal{V}_1$.

Now, for each $0\not\equiv\beta\in [0,+\infty)^N$ consider $v_\beta:= \tau_1(M,\Gamma,\beta)\beta$. Then, it is easy to see that $\nu_1(\alpha,v_\beta) = 0$ so that $v_\beta\in \mathcal P_1$ and $F(v_\beta) = \mathcal F_N(\alpha,\beta)$. In particular, one has that $T_1(N,\alpha)\leq \mathcal V_1$. Conversely, if $v\geq 0$, $v\not\equiv 0$, then $\tau_1(M,\Gamma,v) = 1$ and $\mathcal F_N(\alpha,v) = F(v)$. Since $v_{\max}>0$ one has
\[
\mathcal F_N(\alpha, v_{\max}) = F(v_{\max}) =\mathcal V_1 \geq F(v) = \mathcal F_N(\alpha, v),
\]
where the middle inequality is strict if $v_i= 0$ for some $i$. This completes the proof.
\end{proof}

\subsection{$T_1(N,\alpha)$ as a function of $\alpha$} In this section we take the maximizing sequence $\alpha^j\in(0,+\infty)^{N-1}$ such that
\[
T_1(N,\alpha^j)\to T_1(N).
\]
Let us start with the following auxiliary lemma.
\begin{lemma}
\label{lem:conv_1}
    If $t^j_i\to t_i$, and $\beta_i^j\to\beta_i$ for $i=1,\ldots,N$ then
    \begin{equation}
    \label{eq:lem_conv_1}
    \sum_{i=1}^N \beta_i^jds_{g_0}^{\{z=t^j_i\}} \to \sum_{i=1}^N \beta_ids_{g_0}^{\{z=t_i\}}
     \end{equation}
    in $(W^{1,p}(M))^*$ for all $p>1$ and weakly-$*$.
\end{lemma}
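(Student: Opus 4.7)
The plan is to reduce the statement to a single-circle convergence via the triangle inequality, and then to split the resulting difference into a ``coefficient part'' and a ``translation part.'' Concretely, for each fixed $i$ we would write
\[
\beta_i^j\,ds_{g_0}^{\{z=t^j_i\}}-\beta_i\,ds_{g_0}^{\{z=t_i\}}=(\beta_i^j-\beta_i)\,ds_{g_0}^{\{z=t^j_i\}}+\beta_i\bigl(ds_{g_0}^{\{z=t^j_i\}}-ds_{g_0}^{\{z=t_i\}}\bigr),
\]
and then treat the two contributions separately, using that convergence in $(W^{1,p}(M))^*$ for all $p>1$ plus weak-$*$ convergence is preserved under finite sums.

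The first summand is straightforward: the circles $\{z=t^j_i\}$ all have length $2\pi$, so their line measures are uniformly bounded as Radon measures and, by the trace inequality $W^{1,p}(M)\hookrightarrow L^1(\{z=c\})$ (valid for any $p>1$ with uniform constant on a compact strip in the cylinder), also uniformly bounded in $(W^{1,p}(M))^*$. Hence multiplying by the scalar $\beta_i^j-\beta_i\to 0$ yields convergence to $0$ in both senses.

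The substantive part is the ``translation'' $ds_{g_0}^{\{z=t^j_i\}}-ds_{g_0}^{\{z=t_i\}}\to 0$. For weak-$*$ convergence: for any $f\in C(M)$, $f$ is uniformly continuous on a compact neighborhood containing all the curves, so
\[
\int_{\{z=t^j_i\}} f\,ds_{g_0}-\int_{\{z=t_i\}} f\,ds_{g_0}=\int_0^{2\pi}\bigl(f(\theta,t^j_i)-f(\theta,t_i)\bigr)\,d\theta\longrightarrow 0.
\]
For $(W^{1,p}(M))^*$ convergence, by density it suffices to test against $f\in C^\infty(M)$. The fundamental theorem of calculus and Fubini give
\[
\int_{\{z=t^j_i\}} f\,ds_{g_0}-\int_{\{z=t_i\}} f\,ds_{g_0}=\int_{V^j}\partial_z f\,dv_{g_0},
\]
where $V^j=\Sbb^1\times[t_i,t^j_i]$ is the strip between the two circles. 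By Hölder's inequality with conjugate exponent $q$ of $p$,
\[
\Big|\int_{V^j}\partial_z f\,dv_{g_0}\Big|\leq \|\partial_z f\|_{L^p(V^j)}|V^j|^{1/q}\leq C\,|t^j_i-t_i|^{1/q}\,\|f\|_{W^{1,p}(M)}.
\]
Taking the supremum over $f$ with $\|f\|_{W^{1,p}(M)}\leq 1$ yields $\|ds_{g_0}^{\{z=t^j_i\}}-ds_{g_0}^{\{z=t_i\}}\|_{(W^{1,p}(M))^*}\leq C|t^j_i-t_i|^{1/q}\to 0$.

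The only minor care required, and the only potential pitfall, is bookkeeping between the cylinder $\Sbb^1\times\R$ (where all the computations are natural) and the surface $M=\Sp$ itself. Since the $t_i$ and eventually the $t^j_i$ lie in a fixed compact interval, the relevant part of the cylinder pulls back under $\pi$ to a compact subset of $\Sp\setminus\{N,S\}$ on which the conformal factor between $g_0$ and the induced metric is bounded above and below, so the $W^{1,p}$ norms on the two sides are comparable and the estimates above transport without loss. This completes the plan.
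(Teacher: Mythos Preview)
Your proof is correct and essentially matches the paper's argument. The only cosmetic difference is the decomposition: the paper writes
\[
\beta_i^j\,ds_{g_0}^{\{z=t^j_i\}}-\beta_i\,ds_{g_0}^{\{z=t_i\}}=\beta_i^j\bigl(ds_{g_0}^{\{z=t^j_i\}}-ds_{g_0}^{\{z=t_i\}}\bigr)+(\beta_i^j-\beta_i)\,ds_{g_0}^{\{z=t_i\}},
\]
so the coefficient term sits on the \emph{fixed} curve $\{z=t_i\}$ and is handled by the earlier lemma $\eps\,ds_g^\gamma\to 0$ in $(W^{1,p})^*$, whereas you put it on the moving curve and invoke a uniform trace bound instead; both are equivalent. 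The translation term is treated identically in both proofs via the fundamental theorem of calculus and H\"older, and the weak-$*$ part via uniform continuity.
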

\begin{proof}
    Let $\mu_j$ the measure in the l.h.s. of~\eqref{eq:lem_conv_1} and $\mu$ be the measure in the r.h.s. Let $\mu_j' = \sum_{i=1}^N\beta^j_i\,ds^{\Gamma_i}_{g}$, then
    \[
    \|\mu_j - \mu\|\leq \|\mu_j-\mu_j'\| + \|\mu_j'-\mu\|.
    \]
    Since $\beta^j\to\beta$, Lemma~\ref{prop:hom_2} implies that the second term goes to $0$. For the first term, one has
    \[
    \|\mu_j-\mu'_j\| \leq \sum_{i=1}^N |\beta_i^j| \| ds_g^{\{z=t^j_i\}} - ds_g^{\{z=t_i\}}\|.
    \]
    At the same time, by the fundamental theorem of calculus, for any smooth $f$ one has
    \[   
    \left|\int_{z=t^j_i}f\,ds_{g_0} - \int_{z=t_i}f\,ds_{g_0}\right| = \left|\int_{[t^j_i,t_i]\times\mathbb{S}^1}\partial_z f\,dv_{g_0}\right|\leq (2\pi|t_i-t^j_i|)^{\frac{1}{q}}\|f\|_{W^{1,p}(M)}.
    \]
    Since $|\beta_i|\leq 1$, the lemma follows.
    
    To prove the weak convergence observe that for any continuous $v$ one has
    \[
     \left|\int_{z=t_j}f\,ds_{g_0} - \int_{z=t}f\,ds_{g_0}\right| \leq  \int_{z=t}|f(\theta, t) - f(\theta,t_j)|\,ds_{g_0}\to 0
    \]
    by the uniform continuity of continuous functions on bounded cylinders.
\end{proof}

\begin{proposition}
\label{prop:alphasup}
    Suppose that there exists $i=1,\dots,N-1$ such that $\limsup\alpha^j_i = +\infty$. Then $T_1(N)\leq T_1(N-1)$.
\end{proposition}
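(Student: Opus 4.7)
The plan is to fix the maximizing sequence, invoke Theorem~\ref{thm:conf_exist} to pick optimal weights $\beta^j\in\Delta^{N-1}$ with $\mathcal F_N(\alpha^j,\beta^j)=T_1(N,\alpha^j)$ (up to normalisation), and extract a further subsequence along which $\alpha_i^j\to+\infty$ and $\beta^j\to\beta\in\overline{\Delta^{N-1}}$. After a vertical translation of the cylinder, arrange that the curves $\Gamma_1^j,\ldots,\Gamma_i^j$ stay in a fixed bounded interval while $z_{i+1}^j,\ldots,z_N^j\to+\infty$. Decompose $\mu^j=\mu^{j,-}+\mu^{j,+}$ accordingly and let $a^j:=\sum_{k\leq i}\beta_k^j\to a\in[0,1]$. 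The argument splits on whether $a\in(0,1)$ or $a\in\{0,1\}$.

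If $a\in(0,1)$, both sides carry positive limiting weight. Take the rotationally symmetric Lipschitz test function $u(z)$ equal to $-(1-a^j)$ on $\{z\leq z_i^j\}$, equal to $a^j$ on $\{z\geq z_{i+1}^j\}$, and linear on the gap. Then $\int u\,d\mu^j=0$ and a direct cylinder computation yields
\[
\mathcal F_N(\alpha^j,\beta^j)\;\leq\;\frac{\int_M|\nabla u|^2_{g_0}\,dv_{g_0}}{\int_\Gamma u^2\,d\mu^j}\;=\;\frac{1}{a^j(1-a^j)\alpha_i^j}\;\longrightarrow\;0,
\]
so $T_1(N)=0\leq T_1(N-1)$ trivially in this case.

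Otherwise $a\in\{0,1\}$; by symmetry take $a=1$. Then $\mu^{j,-}\to\mu^-:=\sum_{k\leq i}\beta_k\,ds_{g_0}^{\Gamma_k^-}$ weakly-$*$ (and in $(W^{1,p})^*$) by Lemma~\ref{lem:conv_1}, and $\mu^-$ is an admissible rotationally symmetric measure supported on at most $N-1$ distinct parallels (coincident positions or zero-weight curves only decrease this count further). In particular $\bar\tau_1(\mu^-)\leq T_1(N-1)$, using the elementary monotonicity $T_1(k)\leq T_1(k+1)$, which itself follows from Theorem~\ref{thm_eigen_conv} by adding one parallel of vanishing weight. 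Let $u^-\in C^0(\mathbb{S}^2)\cap C^\infty(\mathbb{S}^2\setminus\Gamma^-)$ be a $\tau_1(\mu^-)$-eigenfunction normalised by $\int(u^-)^2\,d\mu^-=1$, $\int u^-\,d\mu^-=0$, and set $v^j:=u^--c^j$ with $c^j:=\mu^j(M)^{-1}\int u^-\,d\mu^j$. Pairing the weak-$*$ convergence $\mu^{j,-}\to\mu^-$ against the continuous functions $u^-$ and $(u^-)^2$, and using the trivial bound $\int|u^-|^\alpha\,d\mu^{j,+}\leq\|u^-\|_\infty^{\alpha}\mu^{j,+}(M)\to 0$ for $\alpha\in\{1,2\}$, one gets $c^j\to 0$ and $\int(v^j)^2\,d\mu^j\to 1$. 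Since $\int v^j\,d\mu^j=0$, using $v^j$ as a test function yields
\[
\mathcal F_N(\alpha^j,\beta^j)\;\leq\;\frac{\int|\nabla u^-|^2}{\int(v^j)^2\,d\mu^j}\;\longrightarrow\;\tau_1(\mu^-),
\]
and therefore $T_1(N)\leq\bar\tau_1(\mu^-)\leq T_1(N-1)$.

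The most delicate point is that in the case $a=1$ the top measures $\mu^{j,+}$ concentrate at a pole of $\mathbb{S}^2$, which obstructs strong distributional convergence of $\mu^j$ in $(W^{1,p})^*$ for $p<2$; however, continuity of the limit eigenfunction $u^-$ together with the vanishing total mass $\mu^{j,+}(M)=2\pi(1-a^j)\to 0$ renders the escaped cluster harmless. The setup adapts easily when several $\alpha_k^j$ diverge simultaneously: first group the curves into maximal clusters of uniformly bounded internal spacing, then either at least two clusters keep positive limit weight (multi-cluster version of the first case using a piecewise-constant test function with one value per cluster) or a single cluster of at most $N-1$ curves carries all the mass (the argument above applies verbatim).
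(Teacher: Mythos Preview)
Your proof is correct and follows essentially the same strategy as the paper: the split-weight case is handled by the same linear test function, and the concentrated-weight case by reduction to a measure supported on at most $N-1$ parallels. The implementations differ in two places.

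For the clustering, the paper argues once and for all that whenever $\alpha_i^j\to\infty$ one side of the gap must carry vanishing weight, and then iterates over all diverging gaps to extract a single bounded cluster $[i_1,i_2]$ with $i_2-i_1+1\le N-1$; you instead treat the single-gap case in full and relegate the multi-gap situation to a sketch. Both are fine, though the paper's formulation is cleaner and avoids having to revisit the argument.

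For the final eigenvalue comparison, the paper shows that the full measure $\mu^j$ converges weakly-$*$ to $\tilde\mu$ (the escaping part has total mass tending to zero) and then applies Kokarev's upper semicontinuity (Proposition~\ref{prop:weak_conv}) in one line. You instead take the limit eigenfunction $u^-\in C^0(\mathbb{S}^2)$ as a test function, subtract its $\mu^j$-average, and argue directly that the escaping mass $\mu^{j,+}$ is harmless because $u^-$ is bounded and $\mu^{j,+}(M)\to 0$. Your route is more elementary and self-contained (it bypasses the abstract semicontinuity result), at the cost of needing the existence and continuity of a limit eigenfunction; the paper's route is shorter but relies on the quoted proposition. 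Both lead to the same bound $T_1(N)\le T_1(N-1)$, and both tacitly use the weak monotonicity $T_1(k)\le T_1(k+1)$ when the surviving cluster has fewer than $N-1$ curves, which you justify explicitly.
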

\begin{proof}
Up to a choice of a subsequence, we may assume that $\alpha^j_i\to+\infty$. Furthermore, since vertical shifts are conformal, we may assume that $\Gamma^j_{i+1} = \{z = \frac{1}{2}\alpha^j_i\}$, $\Gamma^j_{i} = \{z = -\frac{1}{2}\alpha^j_i\}$. Define 
\[
B^j_1 = \sum_{k=1}^i\beta^j_k,\qquad B^j_2 = 1-B^j_1 = \sum_{k=i+1}^N\beta^j_k
\]

Suppose that  $\liminf B^j_1> 0$ and $\liminf B^j_2> 0$. Consider a function $u_j$ depending only on $z$ such that $u_j(z) = -B_1^j$ for $z\geq \frac{1}{2}\alpha^j_i$, $u_j(z) = B_2^j$ for $z\leq -\frac{1}{2}\alpha^j_i$ and $u(z)$ is linear between $\pm\frac{1}{2}\alpha^j_i$. Then $u_j$ is an admissible test-function for $\mathcal F_{N}(\alpha^j,\beta^j)$ and, in particular,
\[
\mathcal F_{N}(\alpha^j,\beta^j)\leq \frac{1}{(\alpha^j_i)^2B_1^jB_2^j(B_1^j+B_2^j)}
\]
Since $B_1^j$ and $B_2^j$ are bounded from below away from zero, we conclude that $\mathcal F_{N}(\alpha^j,\beta^j)\to 0$, which is a contradiction.

This argument implies that as soon as $\alpha^j_i\to+\infty$ for some $i$, the weights $\beta^j_k$ have to converge to zero either for all $k\leq i$ or for all $k\geq i+1$. Applying the argument several times, we conclude that there exists $1\leq i_1\leq i_2\leq N$ such that 
\begin{enumerate}
    \item $i_2-i_1+1\leq N-1$;
    \item $\beta_i^j\to 0$ for all $i<i_1$ and $i>i_2$; 
    \item $\alpha_i^j\to\infty$ for $i=i_1-1$ and $i=i_2$;
    \item $\alpha_i^j$ are bounded independently of $j$ for $i_1\leq i \leq i_2-1$,
\end{enumerate}
where in all these statements we assume $\alpha_i^j=\infty$ for $i\ne 1,\ldots,N-1$ and $\beta_i^j=0$ for $i\ne 1,\ldots, N$.
Furthermore, condition (4) implies that, up to another vertical shift, the measure
\[
\tilde\mu^j = \sum_{i=i_1}^{i_2}\beta_i^j\,ds^{\Gamma_i^j}_{g_0}
\]
is supported in a fixed compact part of $\mathbb{S}^1\times\R$. Up to a choice of a subsequence, we may then assume that 
\[
\tilde\mu^j\rightharpoonup^*\tilde\mu = \sum_{i=i_1}^{i_2}\beta_i\,ds^{\Gamma_i}_{g_0}
\]
and, in particular, $\tilde\mu$ is a measure supported on at most $N-1$ parallels. By condition (2), the measure 
\[
\mu_j' = \tilde\mu^j - \sum_{i=1}^N\beta_i^j\,ds_{g_0}^{\Gamma^j_i}
\]
converges weakly-$*$ to 0. Therefore, by Lemma~\ref{lem:conv_1} and Proposition~\ref{prop:weak_conv} we have 
\[
\lim_{j\to\infty} F_N(\alpha^j,\beta^j)\leq F_{i_2-i_1+1}(\{\alpha_{i_1},\ldots,\alpha_{i_2-1}\},\{\beta_{i_1},\dots,\beta_{i_2}\})\leq T_1(N-1)
\]
and the proof is complete.
\end{proof}

\begin{proposition}
\label{prop:alphainf}
    Suppose that there exists $i=1,\dots,N-1$ such that $\liminf\alpha_j^i = 0$. Then $T_1(N)\leq T_1(N-1)$.
\end{proposition}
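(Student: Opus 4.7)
The plan is to mirror Proposition~\ref{prop:alphasup} but with collapse rather than separation: after extracting a subsequence, the two adjacent parallels $\Gamma^j_i$ and $\Gamma^j_{i+1}$ are driven on top of each other, producing a limit measure supported on at most $N-1$ distinct parallels whose first eigenvalue will be controlled by $T_1(N-1)$.

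I would begin by extracting a subsequence along which $\alpha^j_k\to \alpha_k\in[0,+\infty]$ for every $k$, with $\alpha_i=0$, and take $\beta^j$ to be the maximisers from Theorem~\ref{thm:conf_exist} so that $\mathcal{F}_N(\alpha^j,\beta^j)=T_1(N,\alpha^j)\to T_1(N)$; up to a further subsequence, $\beta^j\to\beta\in\overline{\Delta}^{N-1}$. If any $\alpha_k=+\infty$, the conclusion follows already from Proposition~\ref{prop:alphasup}, so I may assume $\alpha_k\in[0,+\infty)$ for every $k$. After a vertical translation normalising $\Gamma^j_1=\{z=0\}$, the positions $z^j_k=\sum_{l<k}\alpha^j_l$ are bounded and converge to limits $z_k$, with $z_i=z_{i+1}$ enforced by $\alpha_i=0$. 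Lemma~\ref{lem:conv_1} then yields
\[
\mu^j:=\sum_{k=1}^N\beta^j_k\,ds_{g_0}^{\Gamma^j_k}\longrightarrow \mu:=\sum_{k=1}^N\beta_k\,ds_{g_0}^{\{z=z_k\}}
\]
in $(W^{1,p}(M))^*$. Collecting atoms that share a common position, $\mu=\sum_{l=1}^m\tilde\beta_l\,ds_{g_0}^{\{z=w_l\}}$, with $w_1<\cdots<w_m$ enumerating the distinct limiting positions, $m\leq N-1$, and $\tilde\beta_l=\sum_{k:z_k=w_l}\beta_k$ still summing to $1$. Applying Theorem~\ref{thm_eigen_conv} then gives $\mathcal{F}_N(\alpha^j,\beta^j)\to \mathcal{F}_m(\tilde\alpha,\tilde\beta)$, where $\tilde\alpha_l=w_{l+1}-w_l\in(0,+\infty)$.

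It remains to bound $\mathcal{F}_m(\tilde\alpha,\tilde\beta)\leq T_1(N-1)$. If $m=N-1$ this is immediate from the definition. If $m<N-1$, I would realise the configuration $(\tilde\alpha,\tilde\beta)$ as the $(W^{1,p}(M))^*$-limit of configurations with $N-1$ parallels, obtained by inserting $N-1-m$ extra parallels at distinct auxiliary positions with weights tending to zero and renormalising so that the total weight remains $1$; Lemma~\ref{lem:conv_1} together with Theorem~\ref{thm_eigen_conv} delivers convergence of the corresponding $\mathcal{F}_{N-1}$ to $\mathcal{F}_m(\tilde\alpha,\tilde\beta)$, so $\mathcal{F}_m(\tilde\alpha,\tilde\beta)\leq T_1(N-1)$. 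Combining with the previous step yields $T_1(N)\leq T_1(N-1)$.

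The main obstacle is precisely this final bound: the supremum defining $T_1(N-1)$ is taken over the open simplex $\Delta^{N-2}$, while the limit measure naturally corresponds to a boundary point where some weights may vanish. Resolving this requires the two-sided $(W^{1,p})^*$-continuity from Theorem~\ref{thm_eigen_conv}, not merely the one-sided upper semicontinuity of Proposition~\ref{prop:weak_conv} that sufficed for Proposition~\ref{prop:alphasup}, since I need to approximate $T_1(m)$ from within the larger class rather than bound it from above. Once this is in place, the remaining bookkeeping is entirely analogous to the collapse-at-infinity analysis already carried out.
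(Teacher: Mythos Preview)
Your argument is correct and follows the same route as the paper: reduce to bounded $\alpha$'s via Proposition~\ref{prop:alphasup}, pass to a subsequence so that the measures converge in $(W^{1,p})^*$ by Lemma~\ref{lem:conv_1}, and invoke Theorem~\ref{thm_eigen_conv} to identify the limit eigenvalue with $\mathcal F_m(\tilde\alpha,\tilde\beta)$ for some $m\le N-1$. The paper's proof stops there and simply asserts that this is $\le T_1(N-1)$; the extra step you isolate---that the limit configuration may sit on the boundary of $\overline{\Delta}^{N-2}$ and must be approximated from the interior via Theorem~\ref{thm_eigen_conv}---is genuine and the paper leaves it implicit.

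One small correction: your claim that the case $m=N-1$ is ``immediate from the definition'' is not quite right, since even then some components of $\tilde\beta$ may vanish (the limits $\beta_k$ of the $\beta^j_k$ need not be positive). The same approximation argument you give for $m<N-1$ covers this case as well, so nothing further is needed.
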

\begin{proof}
By Proposition~\ref{prop:alphasup}, we can assume without loss of generality that all $\alpha^j_i$ are uniformly bounded and, up to a choice of a subsequence $\lim\alpha_j^i = 0$ as $j\to\infty$. Thus, up to a vertical shift and possibly further choice of a subsequence, Lemma~\ref{lem:conv_1} implies that we may assume the following convergence of measures in $(W^{1,p}(M))^*$
\[
\mu_j :=\sum_{i=1}^N\beta_i^j\,ds_{g_0}^{\Gamma^j_i}\to \sum_{i=1}^N\beta_i\,ds_{g_0}^{\Gamma_i} =:\mu,
\]
where parallels $\Gamma_i$ are limits of parallels $\Gamma^j_i$. Since $\lim\alpha_j^i = 0$, the measure $\mu$ has at most $N-1$ distinct parallels and the proposition follows form the continuity of eigenvalues statement, see Theorem~\ref{thm_eigen_conv}. 
\end{proof}

\begin{corollary}
    Suppose that $T_1(N)>T_1(N-1)$. Then there exists $\alpha\in(0,+\infty)^{N-1}$, $\beta\in\Delta^{N-1}$ such that
    \[
    \mathcal F_N(\alpha,\beta) = T_1(N).
    \]
\end{corollary}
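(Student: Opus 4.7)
The plan is to take a maximizing sequence for $T_1(N)$ and show that, after passing to a subsequence, it converges to an honest non-degenerate limit. I start with $\alpha^j \in (0,+\infty)^{N-1}$ such that $T_1(N,\alpha^j) \to T_1(N)$. By Theorem~\ref{thm:conf_exist}, each $\alpha^j$ admits a maximizer $\beta^j \in \Delta^{N-1}$ with all coordinates strictly positive, satisfying $\mathcal{F}_N(\alpha^j,\beta^j) = T_1(N,\alpha^j)$. Under the standing hypothesis $T_1(N) > T_1(N-1)$, the contrapositive of Propositions~\ref{prop:alphasup} and~\ref{prop:alphainf} forces, for every $i$, the bounds $0 < \liminf_j \alpha_i^j \leq \limsup_j \alpha_i^j < +\infty$. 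Extracting a subsequence and using compactness of $\overline{\Delta^{N-1}}$, I obtain $\alpha^j \to \alpha \in (0,+\infty)^{N-1}$ and $\beta^j \to \beta \in \overline{\Delta^{N-1}}$.

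Next, I invoke the continuity machinery developed in Section~\ref{sec:intro_hom} and Lemma~\ref{lem:conv_1}. After a suitable vertical shift fixing, say, the first parallel at $z = 0$, the heights of all the $\Gamma_i^j$ converge to those of the $\Gamma_i$. Lemma~\ref{lem:conv_1} then gives that $\mu^j := \sum_i \beta_i^j\, ds_{g_0}^{\Gamma_i^j}$ converges to $\mu := \sum_i \beta_i\, ds_{g_0}^{\Gamma_i}$ in $(W^{1,p}(M))^*$ for every $p > 1$. Theorem~\ref{thm_eigen_conv} then yields $\mathcal{F}_N(\alpha^j,\beta^j) \to \lambda_1(M,[g_0],\mu)$, and therefore $\lambda_1(M,[g_0],\mu) = T_1(N)$.

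The main obstacle is ensuring that $\beta$ lies in the open simplex $\Delta^{N-1}$ rather than on its boundary; a priori some $\beta_i$ could vanish and cause the limiting configuration to collapse. Suppose for contradiction that $\beta_i = 0$ for some index $i$. Then $\mu$ is supported on at most $N-1$ distinct parallels, so it coincides with a measure arising from an $m$-parallel configuration with $m \leq N-1$. I then argue, exactly as in the final step of the proof of Proposition~\ref{prop:alphainf}, that any such measure on at most $N-1$ parallels can be realised as a $(W^{1,p})^*$-limit of admissible $(N-1)$-parallel measures --- for instance by introducing $N-1-m$ extra parallels very close to existing ones with vanishingly small weights and then renormalising. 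By Theorem~\ref{thm_eigen_conv} these approximating eigenvalues converge to $\lambda_1(M,[g_0],\mu)$, hence $\lambda_1(M,[g_0],\mu) \leq T_1(N-1)$. This contradicts $\lambda_1(M,[g_0],\mu) = T_1(N) > T_1(N-1)$. Therefore every $\beta_i$ is strictly positive, $\beta \in \Delta^{N-1}$, and $\mathcal{F}_N(\alpha,\beta) = T_1(N)$, which is the desired maximizer.
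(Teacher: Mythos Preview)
Your proof is correct and follows the same overall structure as the paper: take a maximizing sequence, use Propositions~\ref{prop:alphasup} and~\ref{prop:alphainf} together with the gap hypothesis to trap $\alpha^j$ in a compact subset of $(0,+\infty)^{N-1}$, pass to a subsequence, and use Lemma~\ref{lem:conv_1} with Theorem~\ref{thm_eigen_conv} to obtain $\mathcal F_N(\alpha,\beta)=T_1(N)$ for the limit $(\alpha,\beta)$.

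The only difference is in the last step, where you rule out $\beta\in\partial\Delta^{N-1}$. You argue that if some $\beta_i=0$ then the limiting measure lives on at most $N-1$ parallels and hence $\mathcal F_N(\alpha,\beta)\le T_1(N-1)<T_1(N)$, a contradiction. This is valid, and your approximation step is in fact unnecessary: such a measure is already an admissible $(N-1)$-parallel configuration with $\beta'\in\overline{\Delta}^{N-2}$, and by continuity the supremum of $\mathcal F_{N-1}$ over the closed simplex agrees with $T_1(N-1)$. The paper instead observes that $\mathcal F_N(\alpha,\beta)=T_1(N)$ forces $\mathcal F_N(\alpha,\beta)=T_1(N,\alpha)$, and then the second clause of Theorem~\ref{thm:conf_exist} (any $\beta'$ with a vanishing coordinate satisfies $\mathcal F_N(\alpha,\beta')<T_1(N,\alpha)$) immediately gives $\beta_i>0$ for all $i$. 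The paper's route is slightly cleaner since it does not re-invoke the gap hypothesis and uses Theorem~\ref{thm:conf_exist} at full strength, but your argument is equally sound.
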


\begin{proof}
    Let $\alpha^j$ be a maximizing sequence as before and let $\beta^j\in\Delta^{N-1}$ be such that $T_1(N,\alpha^j) = \mathcal F_N(\alpha^j,\beta^j)$. Then Propositions~\ref{prop:alphasup} and~\ref{prop:alphainf} together with the assumption $T_1(N)>T_1(N-1)$ imply that for each $i$ the sequence $\{\alpha_i^j\}_j$ is bounded away from $0$ and $+\infty$. Therefore, up to a choice of a subsequence we can assume that $\alpha^j\to \alpha\in (0,+\infty)^{N-1}$ and $\beta^j\to\beta\in\overline{\Delta}^{N-1}$.
    By Lemma~\ref{lem:conv_1} and Theorem~\ref{thm_eigen_conv} this implies
    \[
     T_1(N,\alpha^j) =\mathcal F_N(\alpha^j,\beta^j) \to \mathcal F_N(\alpha,\beta).
    \]

    Finally, observe that $\mathcal F_N(\alpha,\beta) = T_1(N)$ also implies $\mathcal F_N(\alpha,\beta) = T_1(N,\alpha)$. Thus, Theorem~\ref{thm:conf_exist} implies $\beta_i>0$ for all $i$.
\end{proof}
\subsection{Existence of maximizers for $T_1(N)$}
\begin{theorem}
For any $N\geq 2$ one has $T_1(N)>T_1(N-1)$. In particular, there exist $\alpha\in (0,+\infty)^{N-1}$ and $\beta\in \Delta^{N-1}$ such that $\mathcal F_N(\alpha,\beta) = T_1(N)$.
\end{theorem}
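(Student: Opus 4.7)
The plan is to prove the inequality $T_1(N) > T_1(N-1)$ by induction on $N$, using the potential formulation $\mathcal{V}_1$ together with the positivity statement of Proposition~\ref{prop:neg_critical}. The existence of a maximizer for $T_1(N)$ then follows from the preceding corollary, so the induction can carry the existence claim along with the strict inequality; the base $N=1$ is $T_1(1)=4\pi$ realized by the equator (closing remark of Section~\ref{sec:prel}).

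Fix $N \geq 2$ and assume $T_1(N-1)$ is attained by some $(\alpha^*,\beta^*) \in (0,\infty)^{N-2} \times \Delta^{N-2}$. Converting to the potential formulation via the identity $T_1(N-1,\alpha^*) = \mathcal{V}_1(N-1,\alpha^*)$ used in the proof of Theorem~\ref{thm:conf_exist}, there exists a critical potential $v^* \in \mathcal{P}_1(\alpha^*) \subset \mathbb{R}^{N-1}$ with $\int_\Gamma v^*\, ds_{g_0} = T_1(N-1)$, and $v_i^* > 0$ for every $i$ by Proposition~\ref{prop:neg_critical}. The key construction is a \emph{transparent insertion}: introduce an auxiliary parallel at some position $z_0$ strictly between two of the existing parallels, producing spacings $\alpha^{(0)} \in (0,\infty)^{N-1}$, and extend $v^*$ to $\tilde{v} \in \mathbb{R}^N$ by placing the value $0$ in the slot corresponding to $z_0$.

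The main technical observation is that this zero entry renders the inserted parallel spectrally invisible. A vanishing potential on $\Gamma_j$ turns the transmission condition into $(\partial_{n_1}+\partial_{n_2})u = 0$, which combined with continuity of $u$ and the interior equation $\Delta u = \nu u$ forces $u$ to be $C^1$ (and then smooth by elliptic regularity) across $\Gamma_j$. Consequently, the spectrum of the transmission problem at $(\alpha^{(0)},\tilde{v})$ coincides with that at $(\alpha^*,v^*)$, so $\nu_1(\alpha^{(0)},\tilde{v}) = \nu_1(\alpha^*,v^*) \geq 0$; thus $\tilde{v} \in \mathcal{P}_1(\alpha^{(0)})$ and $\int_\Gamma \tilde{v}\, ds_{g_0} = T_1(N-1)$.

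To conclude, I invoke Theorem~\ref{thm:conf_exist} at the spacing $\alpha^{(0)}$ to obtain an attained maximizer $v^{**} \in \mathcal{P}_1(\alpha^{(0)})$ of $\mathcal{V}_1(N,\alpha^{(0)})$. Being critical, it satisfies $v_i^{**} > 0$ for every $i$ by Proposition~\ref{prop:neg_critical}. Since $\tilde{v}$ has a vanishing component, $\tilde{v} \neq v^{**}$, so $\tilde{v}$ is not a maximizer, whence
\[
T_1(N,\alpha^{(0)}) = \int_\Gamma v^{**}\, ds_{g_0} > \int_\Gamma \tilde{v}\, ds_{g_0} = T_1(N-1).
\]
This yields $T_1(N) \geq T_1(N,\alpha^{(0)}) > T_1(N-1)$, and the preceding corollary furnishes a maximizing pair $(\alpha,\beta)$ for $T_1(N)$, closing the induction. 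I do not foresee a real obstacle in this approach; the only point requiring slight care is the transparency of zero-potential parallels, handled by the elementary elliptic regularity argument sketched above.
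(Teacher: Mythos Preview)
Your argument is correct and follows the same inductive strategy as the paper: extend a maximizing configuration for $T_1(N-1)$ by adjoining a parallel carrying zero weight, observe that this leaves the first eigenvalue unchanged, and then use Theorem~\ref{thm:conf_exist} to get the strict inequality $T_1(N,\alpha^{(0)})>T_1(N-1)$. Two small points to clean up: for $N=2$ there is only one existing parallel, so ``between two of the existing parallels'' is vacuous---just append the new parallel at one end (this is what the paper does, and the argument is otherwise unchanged); and the step ``$\tilde v\neq v^{**}$, so $\tilde v$ is not a maximizer'' is not valid as written, since maximizers need not be unique---what you actually need (and what the second clause of Theorem~\ref{thm:conf_exist} states) is that \emph{every} maximizer is critical and hence strictly positive by Proposition~\ref{prop:neg_critical}, so any $\tilde v$ with a zero entry fails to maximize. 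Finally, the detour through the $\nu$-problem for ``transparency'' is unnecessary: in the $\mathcal F_N$ formulation a zero weight on $\Gamma_j$ simply deletes that curve from the Rayleigh quotient, so $\mathcal F_{N}(\alpha^{(0)},\beta')=\mathcal F_{N-1}(\alpha^*,\beta^*)$ is immediate from the definition.
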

\begin{proof}
    The proof is by induction on $N$. Since $T_1(1)=2\pi$ and the value of $\bar\tau_1$ on the critical drum exceeds $2\pi$, one has $T_1(2)>T_1(1)$. This covers the base of the induction. Suppose that $T_1(N)>T_1(N-1)$, we now prove that $T_1(N+1)>T_1(N)$. Since $T_1(N)>T_1(N-1)$, there exists $\alpha,\beta$ such that $\mathcal F_N(\alpha,\beta) = T_1(N)$. Let $\alpha_{N}>0$ and consider the sequences $\alpha' = \{\alpha_1,\ldots,\alpha_{N-1},\alpha_N\}\in(0,+\infty)$, $\beta' = \{\beta_1,\ldots,\beta_{N},0\}$ obtained from $\alpha,\beta$ by adding $\alpha_{N}>0$ and $0$ at the end. By definition 
    \[
    \mathcal F_{N+1}(\alpha',\beta') = \mathcal F_N(\alpha,\beta) = T_1(N).
    \]
    At the same time, by Theorem~\ref{thm:conf_exist} one has 
    \[
    T_1(N)=\mathcal F_{N+1}(\alpha',\beta')<T_1(N+1,\alpha')\leq T_1(N+1)
    \]
    and the step of induction is proved.
\end{proof}

\subsection{Proof of Theorem~\ref{thm:rot_symm}}

Item (2) is proved in Section~\ref{sec:multiplicity}, item (3) follows from~\eqref{eq:St_trans_W}. Item (5) is proved in the previous section while item (6) follows from item (1) and Corollary~\ref{cor:FCMS_uniqueness}.

It remains to show (1) and (4). Once again, uniqueness follows from Corollary~\ref{cor:FCMS_uniqueness}. Let $\Phi_N\colon(\Sp,\Gamma^{(N)})\to\B^3$ be the map from that Corollary. Denoting $\mC(N)=\Phi_N(\Sp)$ one has
\begin{equation*}
    \begin{split}
2\area (\mC(N)) &= \int_{\Sp}|d\Phi_N|^2_g\,dv_g = \int_{\Gamma^{(N)}}((\partial_{n_+}+\partial_{n_-})\Phi_N,\Phi_N)\,ds_g \\
&=\bar\tau_1(M,\Gamma^{(N)},g_{\Phi_N}),
 \end{split}
\end{equation*}
where in the last equality we used the definition of $g_{\Phi_N}$ along with the fact that $\tau_1(\Sp,\Gamma^{(N)},g_{\Phi_N})=1$ and components of $\Phi_N$ are the corresponding eigenfunctions.

The particular construction of $\mC(2)$ and the expression for its area is given in Appendix~\ref{app}.

\begin{remark}
    Finally, we remark that the results of Section~\ref{sec:existence} can be extended to other surfaces admitting the action of $\mathbb{S}^1$, namely $\mathbb{RP}^2$, torus and Klein bottle. The latter two cases are even simpler compared to $\Sp$ due to the fact that the action is free. 
\end{remark}
\color{black}
\appendix

\section{Computational proof of Theorem~\ref{thm:rot_symm} for $N=2$} \label{app}
In this section, we present a nontrivial example of the computation of the Steklov transmission spectrum. Furthermore, we provide a direct proof of Theorem~\ref{thm:rot_symm} for $N=2$ inspired by the optimisation of the first Steklov eigenvalue on the rotationally symmetric annuli in~\cite{FS0}.

\subsection{Computation of eigenvalues}
 Consider the cylindrical domain 
\[
C = \mathbb{S}^1 \times (0, T),
\]
where \( T > 0 \) is a parameter. Define the curves:
\[
\Gamma_1 = \mathbb{S}^1 \times \{0\}, \quad \Gamma_2 = \mathbb{S}^1 \times \{T\}, \quad \Gamma = \Gamma_1 \cup \Gamma_2.
\]
Additionally, let
\[
D_1 = \mathbb{D} \times \{0\}, \quad D_2 = \mathbb{D} \times \{T\},
\]
and define the full domain as
\[
M_T = D_1 \cup C \cup D_2.
\]
See Figure \ref{Closed_cylinder}.

\begin{figure}[h]
\centering
\includegraphics[width=5cm]{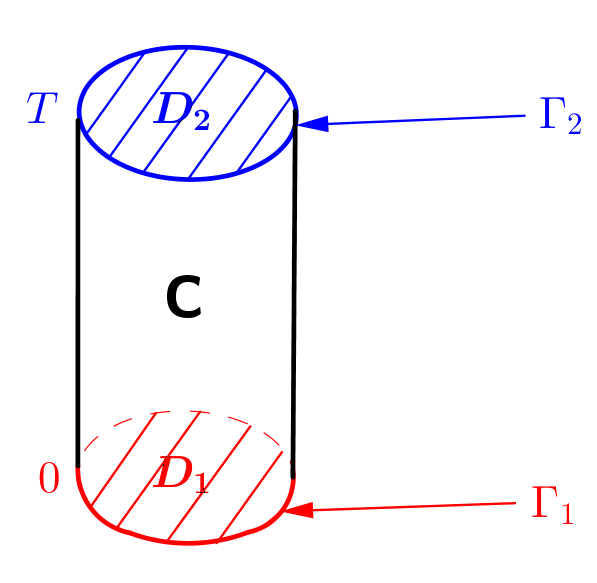}
\caption{Closed cylinder, \( N=2 \).}
\label{Closed_cylinder}
\end{figure}

On \( M_T \), the metric \( g \) is defined as follows.
\begin{itemize}
\item On the cylindrical region \( C \):
  \[
  g|_C = f^2(t) (dt^2 + d\theta^2), \quad t \in (0,T), \quad \theta \in \mathbb{S}^1,
  \]
  where \( f > 0 \) is smooth on \( \overline{C} \).

\item On the disk \( D_1 \):
  \[
  g|_{D_1} = f_1^2(x,y) (dx^2 + dy^2),
  \]
  where \( f_1 > 0 \) is smooth on \( \overline{D_1} \) and satisfies the condition
  \[
  f_1^2(x,y) = f(0), \quad (x,y) \in \Gamma_1.
  \]

\item On the disk \( D_2 \):
  \[
  g|_{D_2} = f_2^2(x,y) (dx^2 + dy^2),
  \]
  where \( f_2 > 0 \) is smooth on \( \overline{D_2} \) and satisfies the condition
  \[
  f_2^2(x,y) = f(T), \quad (x,y) \in \Gamma_2.
  \]
\end{itemize}

\begin{theorem}\label{thm_spectrum_N=2}
The Steklov transmission eigenvalues of $(M_T, g, \Gamma)$ are given by
\begin{align*}
    \tau_0 &= 0, \quad \tau_1 = \frac{f(0)+f(T)}{Tf(0)f(T)}, \\
    \tau_n^- &= \frac{ne^{nT}(f(0)+f(T))-\sqrt{\Delta_n}}{2f(0)f(T)\sinh(nT)}, \\
    \tau_n^+ &= \frac{ne^{nT}(f(0)+f(T))+\sqrt{\Delta_n}}{2f(0)f(T)\sinh(nT)}, 
\end{align*}

where $n>0$ and
\[
\Delta_n = n^2 e^{nT} \left[e^{nT}(f(0)+f(T))^2 - 8f(0)f(T)\sinh(nT) \right].
\] 

Each eigenvalue $\tau_n^+$ and $\tau_n^-$ is associated with an eigenspace generated by $u_n^\pm=H^{n,\pm} \beta^{n}(\theta), \theta\in\mathbb{S}^1$, where $\beta^{n}$ is a linear combination of $\sin(n\theta)$ and $\cos(n\theta)$. Furthermore, 
\[
H^{n,\pm} = 
\left\{ \begin{array}{lc}
h^{n,\pm}_0(0)r^n & \text{on $D_1$}\\
h^{n,\pm}_0(t) & \text{on $C$}\\
h^{n,\pm}_0(T) r^n & \text{on $D_2$},\\
\end{array}\right.
\]
where $h_0^{n,\pm}(t)$ satisfies the second-order ordinary differential equation
\[
h''(t) - n^2h(t) = 0,
\]
with boundary conditions
\[
\begin{cases}
\tau_n^{\pm} f(0) h(0) + h'(0) - n h(0) = 0, \\
\tau_n^{\pm} f(T) h(T) - h'(T) - n h(T) = 0.
\end{cases}
\]

Moreover, the first nonzero eigenvalue is given by:
\begin{align*}
    \tau_1(M_T, g, \Gamma) &= \min \Bigg\{ 
    \frac{f(0)+f(T)}{Tf(0)f(T)}, \\
    &\quad \frac{4}{f(0)+f(T) + 
    \sqrt{(f(0)+f(T))^2 - 8f(0)f(T)\sinh(T)e^{-T}}} 
    \Bigg\}.
\end{align*}

\end{theorem}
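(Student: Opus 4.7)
The plan is to exploit the $\mathbb{S}^1$-symmetry of $(M_T,g)$ via separation of variables in the angular coordinate $\theta$. Since rotations act by isometries, the Steklov transmission eigenspaces decompose into Fourier modes in $\theta$, and I would look for eigenfunctions of the form $u = H(\cdot)\,\beta^n(\theta)$ with $\beta^n$ a linear combination of $\cos(n\theta)$ and $\sin(n\theta)$, $n\geq 0$. The key point is that in two dimensions the Laplacian is conformally invariant up to scaling, so on each smooth piece $D_1$, $C$, $D_2$ the harmonicity of $u$ with respect to $g$ is equivalent to harmonicity for the underlying flat metric. Thus on $D_j$ I may take the $n$-th mode component to be $A_j r^n \beta^n(\theta)$ in polar coordinates (the only harmonic function of this angular frequency bounded at the origin), while on the cylinder $C$ the function $h(t)\beta^n(\theta)$ is harmonic iff $h''(t)-n^2 h(t)=0$. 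Continuity of $u$ across $\Gamma_1$ and $\Gamma_2$ then forces $A_1 = h(0)$, $A_2 = h(T)$, giving the form $H^{n,\pm}$ written in the statement.

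Next, I would impose the Steklov transmission conditions at $\Gamma_1$ and $\Gamma_2$. Since $g = f^2 g_{\mathrm{flat}}$ on each piece and the conformal factors agree at the interfaces ($f_1 = f(0)$ on $\Gamma_1$ and $f_2 = f(T)$ on $\Gamma_2$), the two unit normals with respect to $g$ differ from the flat ones only by the scalar $f(0)^{-1}$, resp.\ $f(T)^{-1}$. Writing down $\partial_{n_1}u + \partial_{n_2}u = \tau u$ with $n_1, n_2$ the two opposite outward normals (one realized as $-\partial_r$ on the disk side, the other as $\partial_t$ on the cylinder side), the disk side contributes $nh(0)/f(0)$ and the cylinder side contributes $-h'(0)/f(0)$, yielding
\[
\tau f(0)h(0) + h'(0) - n h(0) = 0,
\]
and analogously at $t=T$ one obtains $\tau f(T)h(T) - h'(T) - n h(T) = 0$.

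For $n=0$, the general solution $h(t) = at+b$ produces the trivial eigenvalue $\tau_0 = 0$ (constant functions) and the unique nontrivial solution $\tau = \tfrac{f(0)+f(T)}{Tf(0)f(T)}$, which is the advertised $\tau_1$. For $n\geq 1$, writing $h(t) = \alpha e^{nt} + \beta e^{-nt}$ and substituting into the two boundary conditions gives a homogeneous $2\times 2$ linear system in $(\alpha,\beta)$. Setting its determinant to zero leads, after routine simplification using $e^{2nT}-1 = 2e^{nT}\sinh(nT)$, to the quadratic
\[
f(0)f(T)\sinh(nT)\,\tau^2 - n e^{nT}(f(0)+f(T))\,\tau + 2n^2 e^{nT} = 0,
\]
whose two roots are precisely $\tau_n^{\pm}$ with discriminant $\Delta_n$ as stated. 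Completeness of the Fourier basis on $\mathbb{S}^1$ together with the variational characterization of Steklov transmission eigenvalues ensures that these are all eigenvalues, with the displayed multiplicities (factor $2$ from $\sin$ vs.\ $\cos$ whenever $n\geq 1$).

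The last task is to identify $\tau_1(M_T,g,\Gamma)$ as the minimum of the two expressions. Rationalising $\tau_1^-$ by multiplying numerator and denominator by the conjugate $e^T(f(0)+f(T)) + \sqrt{\Delta_1}$ immediately rewrites it in the form appearing in the theorem. To conclude one must verify that $\tau_1^- \leq \tau_n^-$ for every $n\geq 2$, so that the infimum over positive eigenvalues is attained either at the $n=0$ branch or at the $(n=1,-)$ branch. This is the only potentially delicate step: the main obstacle is to show monotonicity $n \mapsto \tau_n^-$ for $n\geq 1$, which I would handle either by Vieta-style estimates (noting $\tau_n^-\tau_n^+ = 2n^2 e^{nT}/(f(0)f(T)\sinh(nT))$ and $\tau_n^- + \tau_n^+$ similarly explicit) together with the asymptotics $\tau_n^- \sim 2n/(f(0)+f(T))$ as $n\to\infty$, or by a direct derivative computation in the continuous variable $n$. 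Everything else in the proof is explicit algebra.
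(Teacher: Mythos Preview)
Your proposal is correct and follows essentially the same route as the paper: separation of variables via the $\mathbb{S}^1$-action, harmonic ansatz on each piece, continuity matching, and the transmission conditions reduced to a $2\times 2$ system whose determinant gives the stated quadratic.

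One small remark on the step you flag as ``potentially delicate'': the paper handles the monotonicity of $n\mapsto \tau_n^-$ more directly than via Vieta or a derivative. Rationalising $\tau_n^-$ for \emph{every} $n$ (not just $n=1$) gives
\[
\tau_n^- = \frac{4n}{f(0)+f(T) + \sqrt{(f(0)+f(T))^2 - 8f(0)f(T)\sinh(nT)e^{-nT}}},
\]
and since $\sinh(nT)e^{-nT} = \tfrac12(1-e^{-2nT})$ is increasing in $n$, the denominator is nonincreasing while the numerator is increasing, so $\tau_n^-$ is increasing. This also makes positivity of the discriminant visible (use $\sinh(nT)e^{-nT}<\tfrac12$ to bound it below by $n^2 e^{2nT}(f(0)-f(T))^2\ge 0$), a point you left implicit.
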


\begin{proof} Since the group $\mathbb{S}^1$ acts on $(M_T,\Gamma, g)$ by isometries and the Steklov transmission eigenvalues are eigenvalues of a pseudodifferential operator on $L^2(\Gamma)$, its eigenspaces are decomposed into the direct sums of representations of $\mathbb{S}^1$, i.e. 
it is sufficient to look for eigenfunctions of the following form
\[
u =
\begin{cases}
u_0(t, \theta) = h_0(t) \beta(\theta), & t \in [0,T], \quad \theta \in \mathbb{S}^1, \\
u_1(r\cos\theta, r\sin\theta, 0) = h_1(r) \beta(\theta), & r \in [0,1], \quad \theta \in \mathbb{S}^1, \\
u_2(r\cos\theta, r\sin\theta, T) = h_2(r) \beta(\theta), & r \in [0,1], \quad \theta \in \mathbb{S}^1,
\end{cases}
\]
where
\[
\beta(\theta) = b\cos(n\theta) + b'\sin(n\theta)
\]
and 
\[
h_i = 
\left\{ \begin{array}{lc}
a_i r^n & \text{on $D_i$}\\
a\cosh(nt) + a'\sinh(nt) & \text{on $C$}\\
\end{array}\right.
\]
for $n>0$, and $h_0(t) = a + a't$ if $n=0$.

Since the eigenfunctions are continuous across $\Gamma_i$ we have
\[
h_0(0) = h_1(1);\quad h(T) = h(1).
\]
Furthermore, the first order conditions on $\Gamma_i$ read
\[
\bd_rh(1) - \bd_th_0(0)  = \tau f(0)h_0(0)
\]
\[
\bd_rh_2(1) + \bd_th_0(T) = \tau f(T)h_0(T).
\]
Or, equivalently,
\[
\begin{cases}
 n h(0) - h'(0)  &= \tau f(0) h(0), \\
 n h(T) + h'(T)  &= \tau f(T) h(T).
\end{cases}
\]
It remains to see for which values of $\tau$ the function of the form $\alpha_0(t) = a\cosh(nt) + a'\sinh(nt)$ for $n>0$ or $\alpha_0(t) = a+a't$ can satisfy these boundary conditions.

Consider first the case 
 \( n = 0 \). Here, one has
\[
h_0(0) = a_0, \quad h_0'(0) = a_0', \quad 
h_0(T) = a_0 + a_0' T, \quad h_0'(T) = a_0'.
\]
Substituting into the system, we obtain the matrix equation:
\[
\begin{pmatrix}
\tau f(0) & 1 \\
\tau f(T) & \tau f(T)T - 1
\end{pmatrix}
\begin{pmatrix}
a_0 \\ a_0'
\end{pmatrix}
=
\begin{pmatrix}
0 \\ 0
\end{pmatrix}.
\]

This matrix equation has a nonzero solution if and only if 
\[
0 = \tau f(0)(\tau f(T)T - 1) - \tau f(T) = \tau [\tau T f(0) f(T) - (f(0) + f(T))],
\]
which simplifies to
\[
\tau = 0 \quad \text{or} \quad \tau = \frac{f(0) + f(T)}{T f(0) f(T)}.
\]
Thus, we arrive at two eigenvalues \( \tau^-_{0} = 0 \) and \( \tau^+_{0} = \frac{f(0) + f(T)}{T f(0) f(T)} \).

For \( n \geq 1 \), we have
\(
h_0(t) = a_0 \cosh(nt) + a'_0 \sinh(nt).
\)
Thus,
\[
h_0(0) = a_0, \quad h_0'(0) = n a'_0,
\]
\[
h_0(T) = a_0 \cosh(nT) + a'_0 \sinh(nT), \quad 
h_0'(T) = n (a_0 \sinh(nT) + a'_0 \cosh(nT)).
\]
This leads to the matrix equation:
\[
\begin{pmatrix}
\tau f(0) - n & n \\
\tau f(T) \cosh(nT) - n e^{nT} & \tau f(T) \sinh(nT) - n e^{nT}
\end{pmatrix}
\begin{pmatrix}
a_0 \\
a'_0
\end{pmatrix}
=
\begin{pmatrix}
0 \\
0
\end{pmatrix}.
\]
This matrix equation has a nonzero solution if and only if:
\[
0 = \tau^2 f(0) f(T) \sinh(nT) - n \tau e^{nT} (f(0) + f(T)) + 2 n^2 e^{nT}.
\]
The discriminant in \( \tau \) is:
\[
\Delta_n = n^2 e^{2nT} (f(0) + f(T))^2 - 8 n^2 e^{nT} f(0) f(T) \sinh(nT),
\]
\[
= n^2 e^{nT} \left[ e^{nT} (f(0) + f(T))^2 - 8 f(0) f(T) \sinh(nT) \right].
\]
Using the inequality \( \sinh t < e^t / 2 \), it follows that:
\[
\Delta_n > n^2 e^{nT} e^{nT} [(f(0) + f(T))^2 - 4 f(0) f(T)] = n^2 e^{nT} e^{nT} (f(0) - f(T))^2 \geq 0.
\]
Thus, there are always two different solutions given by:
\[
\tau^\pm_{n} = \frac{n e^{nT} (f(0) + f(T)) \pm \sqrt{\Delta_n}}{2 f(0) f(T) \sinh(nT)}.
\]
Clearly, $\tau^-_n<\tau_n^+$. To determine the smallest of the eigenvalues $\{\tau_n^-\}_{n>0}$, we observe that
\[
\tau^-_{n} = \frac{4n}{f(0) + f(T) + \sqrt{(f(0) + f(T))^2 - 8 f(0) f(T) \sinh(nT) e^{-nT}}}.
\]
Since $\sinh(nT)e^{-nT}$ is increasing with $n$, so does \( \tau^-_{n} \). Thus, the smallest non-zero eigenvalue $\tau_1$ is equal to $\min\{\tau_0^+,\tau_1^-\}$.

\end{proof}

In order to understand the behaviour of $\tau_1$, we compare \( \tau^+_{0} \) and \( \tau^-_{1} \) for different values of \( T \). We have

\[
\frac{\tau^-_{1}}{\tau^+_{0}} = \frac{T f(T) f(0)}{f(T) + f(0)} 
\cdot \frac{4}{f(0) + f(T) + \sqrt{(f(0) + f(T))^2 - 8 f(0) f(T) \sinh(T) e^{-T}}}
\]

\[
= \frac{4T}{\frac{1}{\alpha} + 1} 
\cdot \frac{1}{\alpha + 1 + \sqrt{(\alpha + 1)^2 - 8 \alpha \sinh(T) e^{-T}}} =: H(T),
\]
where \( \alpha = \frac{f(0)}{f(T)} \). If \( \alpha \) is fixed, then \( H(T) \) is increasing in \( T \) and satisfies
\[
\lim_{T \to 0^+} H(T) = 0, \quad \lim_{T \to +\infty} H(T) = +\infty.
\]
Thus, there exists a unique \( T(\alpha) \in \mathbb{R} \) such that $H(T(\alpha)) = 1$ and
\[
\tau_1=
\begin{cases}
\tau_1^-,&\text{ if } T\leq T(\alpha)\\
\tau_0^+,&\text{ if } T\geq T(\alpha).\\
\end{cases}
\]
In particular, for \( T > T(\alpha) \), the multiplicity of \( \tau_1 \) is 1;  
for \( T < T(\alpha) \), the multiplicity is 2;  
and for \( T = T(\alpha) \), the multiplicity is 3.  

Furthermore, for a fixed $\alpha$, then \( \tau^-_{1} \) is increasing in \( T \), whereas \( \tau^+_{0} \) is decreasing in \( T \). Thus, $\tau_1$ is maximized precisely when $T=T(\alpha)$, see Figure \ref{T_alpha}.

\begin{figure}[!h]
\centering
\includegraphics[width=12.75cm]{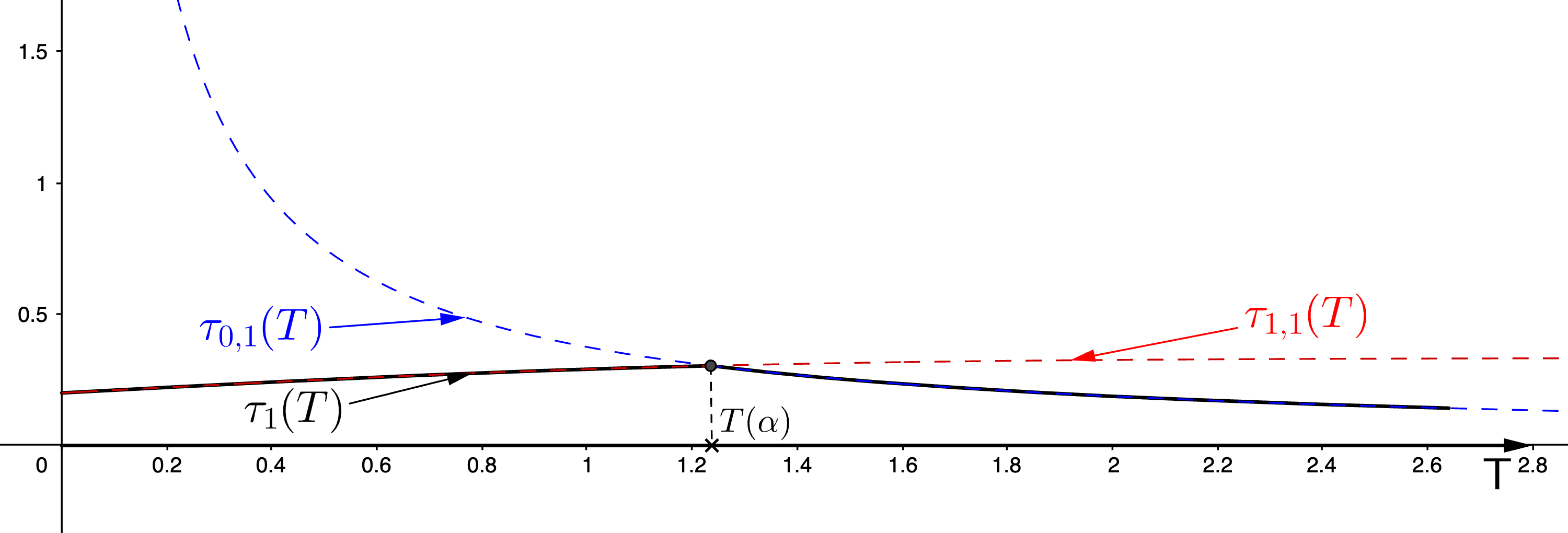}
\caption{Curve of \( \tau_1 \) for \( \alpha = 2 \).}
\label{T_alpha}
\end{figure}

\subsection{Eigenvalue optimization} As we have seen from the previous section the function $\bar\tau_1$ in the class of rotationally symmetric metric depends on two parameters $\alpha,T$. For a fixed $\alpha$ the function $\bar\tau_1(\alpha,T)$ is maximized for $T=T(\alpha)$, so to find the supremum of $\bar\tau$ it remains to study the function $\bar\tau(\alpha, T(\alpha))$. 

\begin{theorem}\label{thm_MT_bound}
For a given \( \alpha \in (0,+\infty) \), there exists a unique \( a \in \mathbb{R} \) such that 
\[
T(\alpha) = t_1(a) + t_2(a), \quad \text{and} \quad \alpha = \frac{t_2(a)}{t_1(a)},
\]
where \( t_1(a), t_2(a) \) are the positive solution to the following two equations respectively
  \[
  \label{eq:t1t2}
  t = \cosh(t + a) e^{-(t + a)};\qquad t = \cosh(t - a) e^{-(t - a)}.
  \]
Furthermore, for all $T>0$ one has
\[
\bar\tau_1(\alpha, T) \leq 2\pi \left[\frac{1}{t_1(a)} + \frac{1}{t_2(a)}\right].
\]
\end{theorem}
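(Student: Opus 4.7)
The plan is to reduce Theorem~\ref{thm_MT_bound} to a single algebraic identity characterising $T(\alpha)$. After normalising $f(T)=1$, $f(0)=\alpha$, the total boundary length $\ell_g(\Gamma)=2\pi(\alpha+1)$ becomes independent of $T$, and I define
\[
t_1 := \frac{T(\alpha)}{\alpha+1}, \qquad t_2 := \frac{\alpha\, T(\alpha)}{\alpha+1},
\]
so that $t_1+t_2=T(\alpha)$ and $t_2/t_1=\alpha$ by construction. The first step is then to show that the defining condition $H(T(\alpha))=1$ (from the discussion following Theorem~\ref{thm_spectrum_N=2}) is equivalent, after isolating the square root and squaring, to
\[
\frac{2\alpha\, T(\alpha)^2}{(\alpha+1)^2} \;=\; T(\alpha)-\frac{1-e^{-2T(\alpha)}}{2},
\]
which in terms of $t_1,t_2$ rearranges to the clean identity $(2t_1-1)(2t_2-1)=e^{-2T(\alpha)}$. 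A short monotonicity argument on $F(T):=T-\tfrac{1}{2}(1-e^{-2T})-\tfrac{2\alpha T^2}{(\alpha+1)^2}$, together with $F((\alpha+1)/2)=\tfrac{1}{2}e^{-(\alpha+1)}>0$, shows that $T(\alpha)>(\alpha+1)/2$, and symmetrically $T(\alpha)>(\alpha+1)/(2\alpha)$; hence $t_1,t_2>1/2$.

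For existence of $a$, I would simply set $a:=-t_1-\tfrac{1}{2}\log(2t_1-1)\in\mathbb{R}$. Then $e^{-2(t_1+a)}=2t_1-1$, so $\cosh(t_1+a)e^{-(t_1+a)}=\tfrac{1}{2}(1+e^{-2(t_1+a)})=t_1$. Using the key identity $(2t_1-1)(2t_2-1)=e^{-2T(\alpha)}$ and $t_1+t_2=T(\alpha)$, one checks $e^{-2(t_2-a)}=e^{-2T(\alpha)}e^{2(t_1+a)}=2t_2-1$, which yields the second equation. Uniqueness is then immediate: the two constraints $t_1+t_2=T(\alpha)$ and $t_2/t_1=\alpha$ determine $t_1,t_2$, after which $a$ is forced by $e^{-2(t_1+a)}=2t_1-1$.

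For the upper bound, I would exploit the monotonicity in $T$ of the two candidate eigenvalues. Since $\tau_0^+=(\alpha+1)/(T\alpha)$ is strictly decreasing in $T$ while the expression
\[
\tau_1^- \;=\; \frac{4}{(\alpha+1)+\sqrt{(\alpha+1)^2-4\alpha(1-e^{-2T})}}
\]
is strictly increasing in $T$ (because $1-e^{-2T}$ is), and since both cross exactly at $T=T(\alpha)$ by the very definition of $T(\alpha)$, the minimum $\tau_1=\min(\tau_0^+,\tau_1^-)$ is maximised at $T=T(\alpha)$. Multiplying by $\ell_g(\Gamma)=2\pi(\alpha+1)$ and using
\[
\frac{1}{t_1}+\frac{1}{t_2} \;=\; \frac{(\alpha+1)^2}{\alpha\, T(\alpha)},
\]
one finds $\bar\tau_1(\alpha,T(\alpha))=\tau_0^+(T(\alpha))\cdot 2\pi(\alpha+1)=2\pi\bigl(1/t_1+1/t_2\bigr)$, giving the bound for all $T>0$.

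The main obstacle is the algebraic reduction in the first paragraph: verifying that the somewhat opaque condition $H(T(\alpha))=1$ is equivalent to the symmetric identity $(2t_1-1)(2t_2-1)=e^{-2T(\alpha)}$ requires a careful squaring step and checking that the sign of $4T\alpha-(\alpha+1)^2$ is correct so that no spurious solutions are introduced, plus the bound $T(\alpha)>(\alpha+1)/2$ to ensure that both $t_i$ genuinely lie above $1/2$ (otherwise $a$ would not be real). Everything else in the argument is then essentially bookkeeping built on this identity and the two monotonicities.
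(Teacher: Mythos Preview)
Your argument is correct and takes a genuinely different route from the paper. The paper proceeds \emph{geometrically}: it builds an explicit surface $M_a$ (a piece of catenoid capped by two flat disks), interprets $t_1(a),t_2(a)$ as the abscissae where the catenary's angle bisector with the vertical passes through the origin, and then checks that the position vector $\vec X$ furnishes a 3-dimensional $\tau_1$-eigenspace with $\tau_0^+=\tau_1^-=1$. Existence and uniqueness of $a$ then come from the monotonicity of $a\mapsto t_2(a)/t_1(a)$ (established via the ODE $t_1'=-1+\tfrac{1}{2t_1}$). Your approach is purely algebraic: you \emph{define} $t_1=T(\alpha)/(\alpha+1)$, $t_2=\alpha T(\alpha)/(\alpha+1)$, reduce $H(T(\alpha))=1$ to the clean identity $(2t_1-1)(2t_2-1)=e^{-2T(\alpha)}$, and then \emph{solve} for $a$ explicitly. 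This is shorter and avoids all geometry; the paper's construction, on the other hand, explains why these particular transcendental equations arise and ties directly into the free-curve minimal surface theme of the article.

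Two small points to tighten. First, your ``short monotonicity argument on $F$'' is not quite right as stated: $F(T)=T-\tfrac12(1-e^{-2T})-\tfrac{2\alpha T^2}{(\alpha+1)^2}$ is \emph{not} monotone (it increases from $0$, then decreases to $-\infty$). What you actually need is immediate from the step before squaring: isolating the square root gives $(\alpha+1)\sqrt{\cdots}=4T(\alpha)\alpha-(\alpha+1)^2$, and the left side is strictly positive (since $(\alpha+1)^2>4\alpha(1-e^{-2T})$), hence $4T(\alpha)\alpha>(\alpha+1)^2$, i.e.\ $\tfrac{1}{t_1}+\tfrac{1}{t_2}<4$. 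Combined with $(2t_1-1)(2t_2-1)>0$, this forces both $t_i>\tfrac12$ (if both were $\le\tfrac12$ then $\tfrac{1}{t_1}+\tfrac{1}{t_2}\ge4$). Second, to conclude that your $t_1,t_2$ are \emph{the} $t_1(a),t_2(a)$ of the statement, you should note (or cite) that each equation $t=\tfrac12(1+e^{-2(t\pm a)})$ has a unique positive solution; this is a one-line monotonicity check and is Lemma~\ref{lem:tia} in the paper.
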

\begin{proof}

We have shown in the previous section that $T(\alpha)$ is characterized as the unique value of $T$ for which $\tau_1(\alpha,T)$ has multiplicity $3$. The theorem is proved by giving an explicit construction of such $T$.
The example is built from a piece of a catenoid $C_a$, which is obtained as a surface of revolution of the graph $y=h_a(t):=\cosh(t-a)$ parametrized by $t\in(-t_1(a),t_2(a))$ and the angle of revolution $\theta$.

\begin{lemma}
\label{lem:t1t2}
    One has that \( t_2(a) \) is such that the line connecting connecting $(t_2(a), h_a(t_2(a)))$ to the origin is the principal bisector of the angle $\angle A$ between the tangent line to the curve \( y = \cosh(t - a) \) at the point \( (t_2(a), h_a(t_2(a))) \), and the vertical line passing through \( (t_2(a), h_a(t_2(a))) \). Moreover, if $2\alpha_2(a)$ denotes the measure of $\angle A$, then
    \[
    \cos\beta_2(a) = \frac{1}{\sqrt{1 + e^{-2(t_2(a)-a)}}}
    \]
    The point $t_1(a)$ and the angle $\beta_1(a)$ are defined as $t_2(-a)$ and $\beta_2(-a)$ respectively, see Figure \ref{bisector}.
\end{lemma}
\begin{figure}
\centering
\includegraphics[width=12cm]{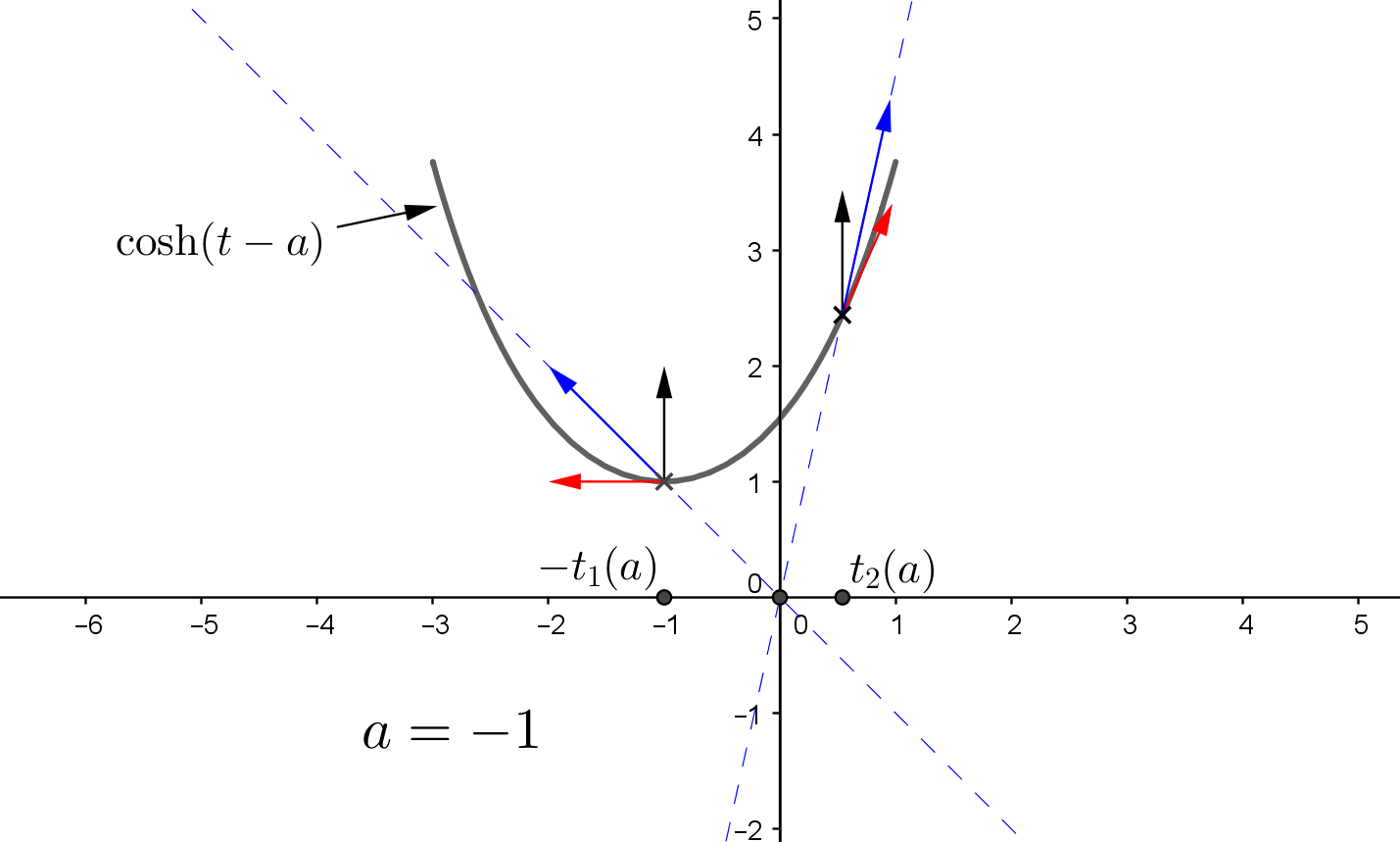}
\caption{Principal bisector}
\label{bisector}
\end{figure}
\begin{proof}
The tangent at \( (t_2(a), h_a(t_2(a))) \) is determined by the unit vector.
\[
v_1 = \frac{1}{\sqrt{1 + (h_a'(t))^2}}
\begin{pmatrix}
1 \\
h_a'(t)
\end{pmatrix}
= \frac{1}{\cosh(t - a)}
\begin{pmatrix}
1 \\
\sinh(t - a)
\end{pmatrix}.
\]
Thus, the parametric equation of the principal bisector of $\angle A$ is given by
\[
\begin{pmatrix}
t \\
h_a(t)
\end{pmatrix}
+ s 
\begin{pmatrix}
\frac{1}{\cosh(t - a)} \\
1 + \tanh(t - a)
\end{pmatrix} =
\begin{pmatrix}
t + \frac{s}{\cosh(t - a)} \\
\cosh (t + a) + s (1 + \tanh(t - a))
\end{pmatrix}.
\]
The bisector passes through \( (0,0) \) if and only if
\[
t \cosh (t - a) = -s = \frac{\cosh (t - a)}{1 + \tanh(t - a)},
\]
which is equivalent to
\[
t = \cosh(t - a) e^{-(t - a)}.
\]

To compute the expression for the angles we observe that by elementary trigonometry we have
\[
\tan(\beta_1) = \frac{-t}{\cosh(t - a)} \Big|_{t=-t_1(a)} = \frac{t_1(a)}{\cosh(t_1(a) + a)} = e^{-(t_1(a) + a)}.
\]
Thus,
\[
\cos(\beta_1) = \frac{1}{\sqrt{1 + \tan^2(\beta_1)}} = \frac{1}{\sqrt{1 + e^{-2(t_1(a) + a)}}}.
\]
Similarly, we obtain the expression for $\beta_2(a)$.
\end{proof}

Before continuing with the proof, let us establish the properties of $t_1,t_2$ as functions of $a$.
\begin{lemma} 
\label{lem:tia}
For all \( a \in \mathbb{R} \), the equation 
\[
t = \cosh(t + a) e^{-(t + a)}
\]
has a unique solution \( t_1(a) \), and the equation 
\[
t = \cosh(t - a) e^{-(t - a)}
\]
has a unique solution \( t_2(a) = t_1(-a) \). Furthermore, $t_1(a)$ is strictly decreasing while $t_2(a)$ is strictly increasing with
\[
\lim_{a \to -\infty} t_1(a) = +\infty, \quad \lim_{a \to +\infty} t_1(a) = \frac{1}{2},
\]
\[
\lim_{a \to -\infty} t_2(a) = \frac{1}{2}, \quad \lim_{a \to +\infty} t_2(a) = +\infty.
\]
\end{lemma}

\begin{proof}
Let \( a \in \mathbb{R} \) and define the function
\[
f(t) = t - \cosh(t + a) e^{-(t + a)} = t - \frac{1}{2} (1 + e^{-2(t + a)}),
\]
Computing the derivative, we obtain
\[
f'(t) = 1 + e^{-2(t + a)} > 0.
\]
Moreover, one has
\[
\lim_{t \to -\infty} f(t) = -\infty, \quad \lim_{t \to +\infty} f(t) = +\infty,
\]
Since both \( f(t) \) is strictly increasing and pass from negative to positive values, the equation \( f(t) = 0 \) has a unique solution  \( t_1(a) \). Since the equation $t = \cosh(t + a) e^{-(t + a)}$ is transformed into $t = \cosh(t - a) e^{-(t - a)}$ after changing $a\mapsto (-a)$, we have that $t_2(a)$ is uniquely defined and $t_2(a) = t_1(-a)$.

The function $t_1(a)$ is differentiable by the implicit function theorem and
\[
t_1' = -(t_1' + 1) e^{-2(t_1 + a)} = -(t_1' + 1) (2t_1 - 1).
\]
Rearranging, we obtain
\begin{equation}
\label{eq:t1'}
t_1' = -1 + \frac{1}{2t_1}.
\end{equation}
Since 
\[
t_1 = \frac{1}{2} (1 + e^{-2(t_1 + a)}) > \frac{1}{2},
\]
it follows that \( t_1' < 0 \). In particular, the limits of $t_1(a)$ as $a\to\pm\infty$ exist. Using $t_1(a) = \frac{1}{2}(1+e^{-2(t_1(a) + a)})$ and $t_1(a)>\frac{1}{2}$ they can be easily computed to be equal to $+\infty$ and $\frac{1}{2}$ respectively. The limits for $t_2(a)$ follow from $t_2(a) = t_1(-a)$.
\end{proof}

Let $M_a$ be the surface obtained by attaching two round caps to $C_a$ at $x=-t_1(a)$ and $x=t_2(a)$ along $\Gamma_1$ and $\Gamma_2$ endowed with the induced metric $g_a$. This is a rotationally symmetric surface with $\Gamma = \Gamma_1\cup\Gamma_2$, which is minimal on $M_a\setminus\Gamma$. Furthermore, according to Lemma~\ref{lem:t1t2}, the position vector $\vec{X}$ on $M_a$ satisfies $(\partial_{n_1}+ \partial_{n_2}) \vec{X}\parallel \vec{X}$ along $\Gamma$ and hence
\[
\partial_{n_1} X + \partial_{n_2} X = l_i X \quad \text{on } \Gamma_i,
\]
where
\[
l_i = \frac{2 \cos(\beta_i)}{|X|} =\frac{2 \cos(\beta_i)}{\sqrt{t_i^2+ \cosh((-1)^{i}t_i-a)^2}}.
\]
Computing using the formulas for $\beta_i$ yields
\[
l_2 = \frac{1}{t_2\cosh(t_2-a)},\quad l_1 = \frac{1}{t_1\cosh(t_1+a)}.
\]

Let $\rho$ be any function such that $\rho = l_i$ on $\Gamma_i$. Then $\vec X$ forms a $3$-dimensional eigenspace for the Steklov transmission eigenvalue $\tau = 1$ on $(M_a,\Gamma,\rho g_a)$. Obviously, $T(a)=t_1(a) + t_2(a)$, so it remains to show that $\tau_1 = 1$, compute $\alpha$ and $\bar\tau_1(M_a,\Gamma,\rho g_a)$.

To start, note that 
\[
\ell_{\rho g_a}(\Gamma_i) = l_i \ell_{g_a}(\Gamma_i) = 2\pi l_i\cosh((-1)^{i}t_i-a) = \frac{2\pi}{t_i},
\]
so that $\alpha(a) = t_2(a)/t_1(a)$.
Setting $f(0) = \frac{1}{t_1}$ and $f(T) = \frac{1}{t_2}$ in the formula for $\tau_0^+$ and $\tau_1^-$ we obtain using $T = t_1+t_2$ that
\[
\tau_0^+ = \tau_1^- =  1,
\]
where in the latter computation we used $e^{-2t_1(a)} = e^{2a}(2t_1(a)-1),$ $e^{-2t_2(a)} = e^{-2a}(2t_2(a)-1)$. This implies $\tau_1(M_a,\Gamma,\rho g_a) =\tau = 1$ and it corresponds to the $3$-dimensional eigenspace, so that $T(\alpha(a)) = t_1(a) + t_2(a)$ and, finally,
\[
\bar\tau_1(\alpha(a), T(\alpha(a))) = 2\pi\left(\frac{1}{t_1(a)} + \frac{1}{t_2(a)}\right).
\]

Finally, we observe that by the properties of $t_1(a)$ and $t_2(a)$ proved in Lemma~\ref{lem:tia} the function $\alpha(a) = \frac{t_2(a)}{t_1(a)}$ is strictly increasing with 
\[
\lim_{t\to-\infty} \alpha(a) = 0;\qquad \lim_{t\to+\infty} \alpha(a) = +\infty.
\]
As a result, for any $\alpha_0>0$ there exists a unique $a$ such that $\alpha(a)=\alpha_0$, which completes the proof of the theorem.

\end{proof}

Theorem~\ref{thm_MT_bound} reduces Theorem~\ref{thm:rot_symm} for $N=2$ to the question of finding the maximum of the following function
\[
F(a):= \frac{1}{2\pi}\bar\tau_1(\alpha(a), T(\alpha(a))) = \left(\frac{1}{t_1(a)} + \frac{1}{t_2(a)}\right)
\]
as a function of $a\in\R$.

\begin{theorem}\label{thm_max_N=2}
The function $F(a)$ achieves its maximum at $a=0$. As a result, one has  
\[
\bar\tau_1(\alpha, T)\leq \frac{4\pi}{t_1(0)},
\]
where $t_1(0)\approx 0.639$ is the unique positive solution of \( t = \cosh(t) e^{-t}\). The equality is achieved iff $\alpha = 0$ and $T=2t_1(0)$.
\end{theorem}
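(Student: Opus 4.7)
The plan is to show that $F$ is strictly decreasing on $(0,\infty)$; by the obvious symmetry $t_2(a)=t_1(-a)$ of Lemma~\ref{lem:tia}, the function $F$ is even, so this will force its unique maximum at $a=0$ and give the claimed bound with equality iff $a=0$, i.e.\ $\alpha=\alpha(0)=1$ and $T = t_1(0)+t_2(0)=2t_1(0)$, where $t_1(0)$ is the positive solution of $t=\cosh(t)e^{-t}$. Using $t_1'(a)=-1+\tfrac{1}{2t_1}$ (from the proof of Lemma~\ref{lem:tia}) and $t_2'(a)=-t_1'(-a)=1-\tfrac{1}{2t_2}$, a direct differentiation gives
\[
F'(a)=\tfrac{1}{2}\bigl(h(t_1(a))-h(t_2(a))\bigr),\qquad h(t)=\frac{2t-1}{t^{3}}.
\]

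Next I would change variables by $p:=2t_1(a)-1$ and $q:=2t_2(a)-1$. The defining equations rearrange to $p=e^{-2(t_1+a)}$ and $q=e^{-2(t_2-a)}$, so multiplying and using $p+q+2=2(t_1+t_2)$ yields the constraint
\[
pq\,e^{p+q}=e^{-2},\qquad p,q>0.
\]
Moreover $h(t)=8\phi(2t-1)$ with $\phi(s)=s/(s+1)^{3}$, so $F'(a)=4(\phi(p)-\phi(q))$. The key algebraic step is the identity
\[
\phi(q)-\phi(p)=\frac{(q-p)\bigl[\,1-pq(p+q+3)\,\bigr]}{(p+1)^{3}(q+1)^{3}},
\]
which follows by expanding $q(p+1)^{3}-p(q+1)^{3}$ and factoring out $(q-p)$.

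For $a>0$ one has $t_1<t_2$ and hence $p<q$, so the sign of $F'(a)$ coincides with the sign of $pq(p+q+3)-1$. Writing $s=p+q>0$, the constraint becomes $pq=e^{-2-s}$, so it remains to verify the elementary inequality $e^{-2-s}(s+3)<1$, equivalently $\psi(s):=e^{s+2}-(s+3)>0$ for $s\ge 0$. Since $\psi(0)=e^{2}-3>0$ and $\psi'(s)=e^{s+2}-1>0$, this is immediate. Therefore $\phi(q)>\phi(p)$ and $F'(a)<0$ on $(0,\infty)$, which together with evenness yields the theorem. I do not expect any serious obstacle here; the content is concentrated in the factorization identity above, which reduces the whole question to the one-line inequality $e^{s+2}>s+3$.
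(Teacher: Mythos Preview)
Your proof is correct and noticeably cleaner than the paper's. Both arguments start from the same derivative formula (the paper writes it as $F'(a)=4[Q((2t_1)^{-1})-Q((2t_2)^{-1})]$ with $Q(x)=x^2(1-x)$, which is exactly your $\tfrac12(h(t_1)-h(t_2))$), but the paper then proceeds by a case split: it uses the numerical fact $t_1(0)\approx 0.639<0.7$ to place $(2t_1(0))^{-1}$ on the decreasing branch of $Q$, introduces an auxiliary threshold $a_0$ defined by $Q((2t_2(a_0))^{-1})=Q((2t_1(0))^{-1})$, shows $F'<0$ on $(0,a_0)$ by monotonicity of $Q$ on each branch, and for $a\ge a_0$ bounds $F(a)<2+1/t_2(a_0)$ numerically and compares with $2/t_1(0)$.

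Your substitution $p=2t_1-1$, $q=2t_2-1$ eliminates all of this. The constraint $pq\,e^{p+q}=e^{-2}$ and the factorization $\phi(q)-\phi(p)=(q-p)\bigl[1-pq(p+q+3)\bigr]/[(p+1)(q+1)]^3$ reduce the sign question to the single inequality $e^{s+2}>s+3$, so no numerics, no case analysis, and no approximate root is needed. This is a genuine improvement: the paper's argument relies on the specific value $t_1(0)\approx 0.639$ landing on the right side of $2/3$, whereas yours shows $F'<0$ on $(0,\infty)$ uniformly. (Minor remark: the equality case corresponds to $a=0$, i.e.\ $\alpha=1$; the ``$\alpha=0$'' in the statement is a typo.)
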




\begin{proof}
Observe that $F(a)$ is an even function. Therefore, it suffices to show that \( F(a) \leq F(0) \) for all \( a \geq 0 \). By~\eqref{eq:t1'} we have
\begin{equation*}
\begin{split}
F'(a) &=  -t_1(a)^{-2}(-1 + (2t_1(a))^{-1}) - t_2(a)^{-2}(1 - (2t_2(a))^{-1})\\
&= 4 \left[ Q\left((2t_1(a))^{-1}\right) - Q\left((2t_2(a))^{-1}\right) \right],
\end{split}
\end{equation*}
where \( Q(x): = x^2 (1 - x) \). Since \( (2t_1(a))^{-1}, (2t_2(a))^{-1} \in (0,1) \) by Lemma~\ref{lem:tia}, we study \( Q \) on \( (0,1) \).

The derivative of \( Q \) is given by:
\[
Q'(x) = 2x(1-x) - x^2 = x(2 - 2x - x) = x(2 - 3x).
\]
Thus, \( Q(x) \) is increasing on \( (0, 2/3) \) and decreasing on \( (2/3, 1) \). Since \( t_1(0) \approx 0.639 < 0.7 \), it follows that \( (2t_1(0))^{-1} > \frac{5}{7} > \frac{2}{3} \).

Since $Q(0) = Q(1) = 1$ there exists a unique \( 0 < a_0 < \frac{2}{3} \) be such that
\[
Q((2t_2(a_0))^{-1}) = Q((2t_1(0))^{-1}).
\]
For \( 0 < a < a_0 \), we have:
\[
t_1(a) < t_1(0) \Longrightarrow (2t_1(a))^{-1} > (2t_1(0))^{-1} > \frac{2}{3}
\]
And, thus, 
\[
 Q((2t_1(a))^{-1}) < Q((2t_1(0))^{-1}).
\]
On the other hand,
\[
t_2(0) < t_2(a) < t_2(a_0) \Longrightarrow (2t_2(0))^{-1} > (2t_2(a))^{-1} > (2t_2(a_0))^{-1}.
\]
If \( (2t_2(a))^{-1} < \frac{2}{3} \), then \( Q((2t_2(a))^{-1}) > Q((2t_2(a_0))^{-1}) = Q((2t_2(0))^{-1}) \). Otherwise, if \( (2t_2(a))^{-1} \geq \frac{2}{3} \), then \( Q((2t_2(a))^{-1}) > Q((2t_2(0))^{-1}) \). This implies
\[
Q((2t_1(a))^{-1}) - Q((2t_2(a))^{-1}) < Q((2t_1(0))^{-1}) - Q((2t_2(0))^{-1}) = 0.
\]
In either case,  \( F \) is decreasing on \( (0, a_0) \), and we obtain \( F(a) < F(0) \) for all \( 0 < a < a_0 \). 

For \( a \geq a_0 \), we estimate:
\[
F(a) = \frac{1}{t_1(a)} + \frac{1}{t_2(a)} < 2 + \frac{1}{t_2(a_0)}.
\]
To compute \( t_2(a_0) \), let \( x_0 = (2t_2(0))^{-1} \). Then, \( (2t_2(a_0))^{-1} \) is a root of the cubic polynomial
\[
Q(x) - Q(x_0) = -x^3 + x^2 - Q(x_0).
\]
Using polynomial division, we obtain
\[
Q(x) - Q(x_0) = (x - x_0)(-x^2 + (1 - x_0)x + x_0(1 - x_0)).
\]
Solving for \( x \), we find that the other positive root is given by
\[
x_2 = \frac{1}{2} \left( (1 - x_0) + \sqrt{(1 - x_0)(1 + 3x_0)} \right).
\]
Thus, \( (2t_2(a_0))^{-1} = x_2 \), and we obtain
\[
2 + \frac{1}{t_2(a_0)} = 2 + 2x_2 \approx 2 + 1.07085 = 3.07085.
\]
Since \(2t_1(0)^{-1}\approx 3.12989,\)
it follows that
\[
2 + \frac{1}{t_2(a_0)} < \frac{2}{t_1(0)},
\]
which completes the proof.
\end{proof}

\section{Rotationally symmetric minimal surfaces in $\B^3$}
\label{app2}
\subsection{Minimal Surfaces of Revolution in \(\mathbb{R}^3\)}

In this section we study properties of a $\bar\tau_1$-critical map by the first $\tau_1$-eigenfunctions
\[
\Phi_N(z,\theta) = (a_N(z)\cos(\theta),a_N(z)\sin\theta,b_N(z)),
\]
where $(z,\theta)\in\R\times\mathbb{S}^1$, $\Gamma_i = \{z = z_i\}$, $i=1,\dots,N$ and we set $z_0 = -\infty$, $z_{N+1} = +\infty$ for convenience. Moreover, $b_N(z)$ is a piecewise linear function with break points at $z_i$.
Our goal is to provide missing details from the proof of Corollary~\ref{cor:FCMS_uniqueness}, namely to show that $\Phi_N$ is injective and that its image is uniquely determined up to an element of $O(3)$. The uniqueness statement relies heavily on the work of~\cite{KZ}.

Let us make some preliminary observations. First, $\Phi_N|_{\{z_i<z<z_{i+1}\}}$ is a smooth conformal map, so that its injective there and the image is a minimal surface of revolution about the $z$-axis. It is well known that such minimal surface has to be a piece of a horizontal plane (a disk or an annulus) or a piece of catenoid obtained by rotating a catenary $(t, r_{a,b}(t))$, where
\[
r_{a,b}(t) = a \cosh\left(\frac{t - b}{a}\right), \quad t\in[t_1,t_2]
\]
and $a>0$, $b\in\R$. Second, since $\Phi_N$ is conformal and $r_{a,b}(t)\to+\infty$ as $t\to\pm\infty$, the images $\Phi_N(\{z>z_N\})$ and $\Phi_N(\{z<z_1\})$ have to be horizontal discs. Hence, $b_N(z)$ is locally constant for $z>z_N$ and $z<z_1$. At the same time, if $i\ne 0,N$, then $\Phi_N(\{z_i<z<z_{i+1}\})$ can not lie in a horizontal plane as horizontal plane intersects the sphere $\Sp$ in a single circle whereas $\Phi_N(\Gamma_i)$ and $\Phi_N(\Gamma_{i+1})$ both have to lie on $\Sp$. Thus, we conclude that the the image of $\Phi_N$ looks like a collection of $(N-1)$ catenoids with two planar caps, which intersect at the common boundary on $\Sp$ at equal and opposite angles. Finally, we observe
\begin{proposition}
\label{prop:b_N_monotone}
    The function $b_N(z)$ is strictly monotone on $[z_1,z_N]$. In particular, $\Phi_N$ is injective.
\end{proposition}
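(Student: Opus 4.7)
The function $b_N$ is piecewise affine on $[z_1,z_N]$, with nonzero slope $\alpha_i := b_N'|_{(z_i,z_{i+1})}$ on each of the $N-1$ intervals (since, as noted just before the statement, each of these intervals is mapped to a catenoid piece and not to a horizontal disk). Monotonicity on $[z_1,z_N]$ therefore reduces to showing that $\alpha_1,\ldots,\alpha_{N-1}$ share a common sign; once this is achieved, injectivity of $\Phi_N$ is automatic, because on each interval the map is a conformal parametrisation of an embedded catenoid arc and strict monotonicity of $b_N$ prevents any two such arcs from sharing an interior point.

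To control the signs of the $\alpha_i$, I plan to extract two families of identities at each interior circle $\Gamma_i$ for $2\le i\le N-1$. First, the Steklov transmission eigenvalue equation satisfied by the components of $\Phi_N$, combined with $\Phi_N(\Gamma_i)\subset \Sp$, yields a jump relation of the form $\alpha_{i-1}-\alpha_i = \rho_i\,b_N(z_i)$ for some conformal density $\rho_i>0$ associated with $g_\Phi$. Second, the conformality identity $a_N'(z)^2+b_N'(z)^2 = a_N(z)^2$, combined with the harmonicity expression $a_N(z)=A_i e^z + B_i e^{-z}$ on $(z_i,z_{i+1})$, forces $A_i,B_i>0$ and $|\alpha_i| = 2\sqrt{A_iB_i}$, a quantity easily identified with the waist radius $a^{(i)}$ of the $i$-th catenoid piece. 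Combining the two, $\alpha_{i-1}$ and $\alpha_i$ have the same sign iff $\rho_i|b_N(z_i)| = |a^{(i-1)}-a^{(i)}|$, and opposite signs (a local extremum of $b_N$) iff $\rho_i|b_N(z_i)| = a^{(i-1)}+a^{(i)}$.

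The main obstacle will be ruling out this second ``local extremum'' scenario. My plan is to combine the local identities above with the boundary conditions imposed by the two flat caps: the requirement that $a_N$ decay at the pole corresponding to each tail uniquely determines $a_N'(z_1^-)$ and $b_N'(z_1^-)$ (and symmetrically at $z_N$), which via the jump relation pins down the waist position of the first and last catenoid pieces in terms of $(R_1,Z_1)$ and $(R_N,Z_N)$. Propagating these constraints inward through the chain and invoking the uniqueness result for balanced catenoid stacks established in~\cite{KZ}, which is summarised in the remainder of this appendix, one is reduced to showing that any balanced stack of catenoid pieces with flat caps inside $\B^3$ and boundary circles on $\Sp$ necessarily has its boundary circles arranged in strict $z$-monotone order in $\R^3$. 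The hardest step in this reduction will be verifying that a catenoid piece entirely contained in $\B^3$ cannot meet $\Sp$ in a second circle on the same $z$-side as its initial circle while also satisfying the balanced angle condition at the next meeting; it is at this point that the interplay between the catenary geometry and the free-curve condition carries the argument.
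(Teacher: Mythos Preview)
Your setup is correct: the jump relation $\alpha_{i-1}-\alpha_i=\rho_i\,b_N(z_i)$ with $\rho_i>0$ is exactly what drives the argument, and the conformality computation $|\alpha_i|=2\sqrt{A_iB_i}$ is right too. But you are missing the key idea, and your plan to close the argument has a real gap.

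The missing idea is simply the Courant nodal domain theorem. Since $b_N$ is a $\tau_1$-eigenfunction (orthogonal to constants on $\Gamma$, hence sign-changing) and by Courant has at most two nodal domains, it has \emph{exactly one} sign change, at a unique $z^*\in(z_1,z_N)$. Normalise so that $b_N<0$ for $z<z^*$ and $b_N>0$ for $z>z^*$. The boundary transmission relations at the two caps give $\alpha_1=-\rho_1 b_N(z_1)>0$ and $\alpha_{N-1}=\rho_N b_N(z_N)>0$. Now read the interior jump relations $\alpha_{i-1}-\alpha_i=\rho_i b_N(z_i)$ in order: for $z_i\le z^*$ the right-hand side is $\le 0$, so the slopes are nondecreasing up to the index just below $z^*$; for $z_i>z^*$ the right-hand side is $\ge 0$, so the slopes are nonincreasing thereafter. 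Combined with $\alpha_1>0$ and $\alpha_{N-1}>0$, every $\alpha_i>0$. That is the paper's entire proof.

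Your alternative route via catenoid geometry and the \cite{KZ} classification is problematic on two counts. First, it is circular as stated: the notion of ``balanced configuration'' used in this appendix (Definition~\ref{def:total-balanced}) and the \cite{KZ} uniqueness statement (Corollary~\ref{cor:uniq_B}) both presuppose $0<\beta_1<\cdots<\beta_N<\pi$, i.e.\ precisely the monotonicity of $b_N$ that you are trying to prove; you cannot invoke those results before establishing it. Second, even if you try to bypass this by working with a generalised (possibly non-monotone) stack and proving directly that balanced catenoid pieces in $\B^3$ with the free-curve angle condition must have monotone boundary heights, that geometric claim is exactly the ``hardest step'' you flag and leave unproved---and it is not obviously easier than the proposition itself. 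The Courant argument avoids all of this: none of the catenoid waist radii, the \cite{KZ} machinery, or the conformality identity $a_N'^2+b_N'^2=a_N^2$ are needed.
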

\begin{proof}
For each \(i\in\{1,\dots,N-1\}\) on \((z_i,z_{i+1})\) one has
\[
  b_N(z)=k_i\,(z-z_i)+m_i,
  \qquad z\in(z_i,z_{i+1}),
\]
for suitable constants \(k_i,m_i\).  The transmission conditions
$$\left\{\begin{array}{l}
-b_N'(z_1^+)\,=\,\tau_1\,b_N(z_1),\\

b_N'(z_i^-)-b_N'(z_i^+)\,=\,\tau_1\,b_N(z_i),
\quad i=2,\dots,N-1,\\

b_N'(z_N^-)\,=\,\tau_1\,b_N(z_N),
\end{array}\right.$$ 
then become
$$\left\{\begin{array}{l}
  -k_1\,=\,\tau_1\,b_N(z_1),\\
  k_{i-1}-k_i\,=\,\tau_1\,b_N(z_i),
    \quad i=2,\dots,N-1,\\
  k_{N-1}\,=\,\tau_1\,b_N(z_N).
\end{array}\right.$$

By the Courant nodal domain theorem, $b_N(z)$ has a unique zero $z^*$ and since $b_N$ is constant for $z>z_N$ and $z<z_1$, one has \(z^*\in(z_1,z_N)\) and, therefore, without loss of generality we may assume that 
\[
  b_N(z^*)=0,\quad b_N(z)<0\text{ for }z<z^*,\quad b_N(z)>0\text{ for }z>z^*.
\]
In particular,
\[
  k_1=-\tau_1\,b_N(z_1)>0,
  \quad
  k_{N-1}=\tau_1\,b_N(z_N)>0.
\]
Let \(s\in\{2,\dots,N\}\) be the smallest index with \(z_s>z^*\).  Then for \(i\le s-1\) we have \(b_N(z_i)\le0\), so
\[
  k_{i-1}-k_i=\tau_1\,b_N(z_i)\le0
  \quad \text{i.e} \quad
  k_{i-1}\le k_i.
\]
Since \(k_1>0\), it follows that
\[
  0<k_1\le k_2\le\cdots\le k_{s-1}.
\]
Similarly, for \(i\ge s\), \(b_N(z_i)\ge0\) implies \(k_{i-1}\ge k_i\), and hence
\[
  k_{s-1}\ge k_s\ge\cdots\ge k_{N-1}>0.
\]
Thus \(k_i>0\) for all \(i\), and consequently \(b_N\) is strictly increasing on \([z_1,z_N]\).

\end{proof}

\subsection{Stacks of catenoids}
The image $\Phi_N(\Sp)$ contains a collection of catenoids each of which intersects the sphere at two circles. Since we are concerned with the angles of these intersections it is convenient to introduce angular characterization of catenaries as in~\cite{KZ}.

Let $\mathbb{K}_{a,b}\subset\R^2$ denote the catenary $\{(t,r_{a,b}(t))\}$ that intersects the circle $\mathbb{S}^1$ in two points $P_1,P_2$ with $P_1$ to the right of $P_2$. We define  $\beta^+(a,b), \alpha^+(a,b)$ as follows
\begin{enumerate}
  \item $\beta^+(a,b)$ is the angle between the positive $x$–axis and the radius vector $\overrightarrow{OP_1}$, i.e.\ the latitude of the intersection point $P_1\in\mathbb{K}_{a,b}\cap\mathbb{S}^1$.
  \item $\alpha^+(a,b)$ is the angle between the inward‐pointing tangent to the catenary at $P_1$, given by the vector $(-1,-r'_{a,b}(t_1))$, and the unit tangent $\overrightarrow{OP_1}^\perp$ to $\mathbb{S}^1$ at $P_1$, oriented towards $(-1,0)$. The orientation is fixed by requiring
  \[
    \overrightarrow{OP_1}\cdot\overrightarrow{OP_1}^\perp = 0,
    \qquad
    \det\bigl(\overrightarrow{OP_1},\,\overrightarrow{OP_1}^\perp\bigr)>0.
  \]
\end{enumerate}

\begin{lemma}[{\cite[Lemma 2.6]{KZ}}]
\label{lem:KZ}
    One has
    \[
  \beta^+(a,b)\in(0,\pi),
  \quad
  \alpha^+(a,b)\in(0,\pi-\beta^+(a,b)),
\]
Furthermore, for every pair $(\beta,\alpha)$ in this range there is a unique catenary $\mathbb{K}_{a,b}$ such that
\[
  \beta^+(a,b)=\beta,
  \quad
  \alpha^+(a,b)=\alpha.
\]
\end{lemma}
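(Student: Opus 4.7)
The plan is to introduce slope coordinates on the space of catenaries and reduce the lemma to an explicit one-parameter family analysis. Given $P_1=(\cos\beta,\sin\beta)$ with $\beta\in(0,\pi)$ and any $s\in\mathbb{R}$, there is a unique catenary $\mathbb{K}_{a,b}$ passing through $P_1$ with $r'_{a,b}(\cos\beta)=s$: combining $r_{a,b}(\cos\beta)=a\sqrt{1+s^2}=\sin\beta$ with $\sinh^{-1}(s)=(\cos\beta-b)/a$ yields
\[
a=\frac{\sin\beta}{\sqrt{1+s^2}},\qquad b=\cos\beta-a\sinh^{-1}(s).
\]
The range condition $\beta^+\in(0,\pi)$ is immediate since $P_1$ lies in the open upper unit semicircle. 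Thus the map $(a,b)\mapsto(\beta,s)$ identifies the space of catenaries with a marked right-intersection point on $\mathbb{S}^1$.

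Next I would identify the admissible slopes, i.e. those $s$ for which $\mathbb{K}_{a,b}$ actually meets $\mathbb{S}^1$ in a second point $P_2$ with $P_1$ to the right. Set $f(t):=t^2+r_{a,b}(t)^2-1$. A computation gives $f''(t)=2+2\cosh(2(t-b)/a)>0$, so $f$ is strictly convex and has at most two zeros; we already know $f(\cos\beta)=0$, and moreover $f(-1)=r_{a,b}(-1)^2>0$ together with $f'(\cos\beta)=2(\cos\beta+s\sin\beta)$. When $s>-\cot\beta$ this derivative is positive, so $f$ is negative just to the left of $\cos\beta$; by the intermediate value theorem and strict convexity, $f$ has exactly one other zero $t_2\in(-1,\cos\beta)$, producing $P_2$ as required. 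The boundary case $s=-\cot\beta$ is precisely tangency of $\mathbb{K}_{a,b}$ to $\mathbb{S}^1$ at $P_1$, while $s<-\cot\beta$ forces the second zero of $f$ to lie to the right of $\cos\beta$, contradicting the convention $t_1>t_2$. Hence the admissible range is exactly $s\in(-\cot\beta,+\infty)$.

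The final step is to express $\alpha^+$ in terms of $(\beta,s)$. The unit inward catenary tangent at $P_1$ is $(-1,-s)/\sqrt{1+s^2}$ and the prescribed unit circle tangent is $(-\sin\beta,\cos\beta)$, so
\[
\cos\alpha^+=\frac{\sin\beta-s\cos\beta}{\sqrt{1+s^2}}.
\]
Differentiating gives $\partial_s\cos\alpha^+=-(\cos\beta+s\sin\beta)/(1+s^2)^{3/2}$, which is strictly negative precisely on the admissible range $s>-\cot\beta$. A direct substitution shows $\cos\alpha^+\to 1$ as $s\to(-\cot\beta)^+$ and $\cos\alpha^+\to-\cos\beta$ as $s\to+\infty$, so $s\mapsto\alpha^+$ is a strictly increasing diffeomorphism of $(-\cot\beta,+\infty)$ onto $(0,\pi-\beta)$. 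Composing with the trivial identification in the $\beta$-coordinate produces the claimed bijection $(a,b)\leftrightarrow(\beta^+,\alpha^+)$, finishing both the range and uniqueness statements.

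The main subtlety is the alignment of the two thresholds at $s=-\cot\beta$: the admissibility criterion $f'(\cos\beta)>0$ and the monotonicity criterion $\partial_s\cos\alpha^+<0$ are governed by the same linear expression $\cos\beta+s\sin\beta$. This coincidence — geometrically, the fact that tangency of the catenary to $\mathbb{S}^1$ at $P_1$ collapses the second intersection $P_2$ and the opening angle $\alpha^+$ at the same instant — is what makes the admissible slope set and the target angular interval $(0,\pi-\beta)$ match perfectly; without it, one would get either spurious configurations or missing angles at the endpoints.
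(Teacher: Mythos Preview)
Your proof is correct and complete. Note that the paper does not actually prove this lemma; it is stated with a citation to \cite[Lemma~2.6]{KZ} and used as a black box. What you have written is therefore a self-contained argument that the paper does not supply.

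The slope-parametrisation idea is natural and efficient. Two points worth making explicit, both of which you implicitly handle: first, the forward direction (that any admissible $(a,b)$ gives $\alpha^+\in(0,\pi-\beta^+)$) follows because strict convexity of $f$ forces $f'(t_1)>0$ at the \emph{right} zero, hence $s>-\cot\beta$ automatically, feeding into your monotonicity formula; second, the angle $\alpha^+$ is by definition the standard angle in $[0,\pi]$ between two vectors, so it is determined by its cosine, and your endpoint limits $\cos\alpha^+\to 1$ and $\cos\alpha^+\to -\cos\beta$ then pin down the open interval $(0,\pi-\beta)$ exactly. Your closing observation about the coincidence of the admissibility threshold and the monotonicity threshold at $s=-\cot\beta$ is the right way to understand why the argument closes cleanly.
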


We denote the resulting catenary by $\mathbb{K}(\beta,\alpha)$. Similarly, $\beta^-(\beta,\alpha), \alpha^-(\beta,\alpha)$, which can be also considered as functions of $(a,b)$, are defined as follows:
\begin{enumerate}
  \item $\beta^-$ is the angle between the positive $x$–axis and the radius vector $\overrightarrow{OP_2}$, i.e.\ the latitude of the left intersection point $P_2\in\mathbb{K}(\beta,\alpha)\cap\mathbb{S}^1$.
  \item $\alpha^-$ is the angle between the inward‐pointing tangent $(1,r'_{a,b}(t_2))$ to the catenary at $P_2$ and the vector $-\overrightarrow{OP_2}^\perp$, where $\overrightarrow{OP_2}^\perp$ is the unit tangent to $\mathbb{S}^1$ at $P_2$, oriented towards $(1,0)$.
\end{enumerate}
These angles are also depicted in Figure~\ref{fig:angular_cat}.

\begin{figure}[h]
  \centering
  \includegraphics[width=12cm]{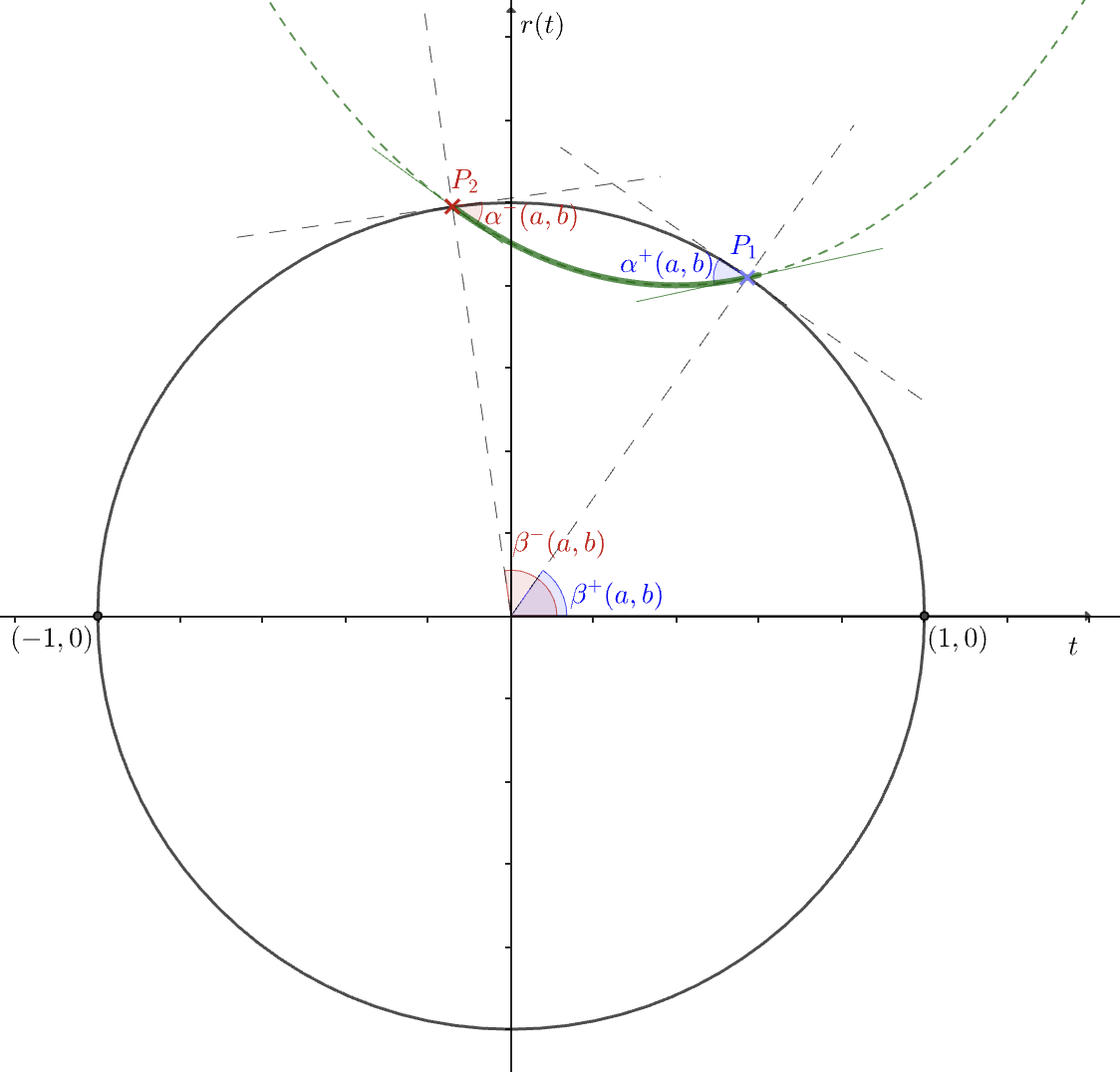}
  \caption{Angular characterization of a catenary with parameters $a=0.8$, $b=0.4$.}
  \label{fig:angular_cat}
\end{figure}

\subsection{Balanced configurations and uniqueness}

We now describe the properties of the image $\Phi_N(\Sp)$ in the notation introduced above. Namely,  $\Phi_N(\Sp)$ is a surface of revolution of the union of catenaries and lines in $\B^2$ with the following properties. Let $\beta_i\in\left[0,\pi\right]$ such that $(\cos\beta_i,\sin\beta_i )= (a_N(z_i),b_N(z_i))$ and without loss of generality assume $\beta_1<\pi/2$. Then by Proposition~\ref{prop:b_N_monotone} one has
\[
0<\beta_1<\beta_2<\cdots<\beta_N<\pi.
\]
In particular, for $i=1,\ldots, N-1$ the image $\Phi_N(\{z_i<z<z_{i+1}\})$ is a surface of revolution of $\mathcal C_i := \mathbb{K}(\beta_i,\alpha^+_i)\cap\B^2$, where $\alpha^+_i$ is such that  $\beta^-(\beta_i,\alpha^+_i)=\beta_{i+1}$. The fact that two adjacent catenoids meet the boundary $\bd\B^3$ at equal angles means that 
 for each  $i=2,\ldots,  N-1$,  $\alpha_{i}^+=\alpha^-(\beta_i,\alpha^+_i)$. Finally, the images $\Phi_N(\{z_N<z\})$ and $\Phi_N(\{z<z_1\})$ are disks and a surface of revolutions of line segments. This translates into $\alpha_1^+=\beta_1^+$ and $\alpha^-(\beta_{N-1},\alpha_{N-1}^+) = \pi-\beta_N$, see Figure~\ref{fig:cat_conf_k=3}. Based on this, we give the following definition.

\begin{figure}[ht]
  \centering
  \includegraphics[width=12cm]{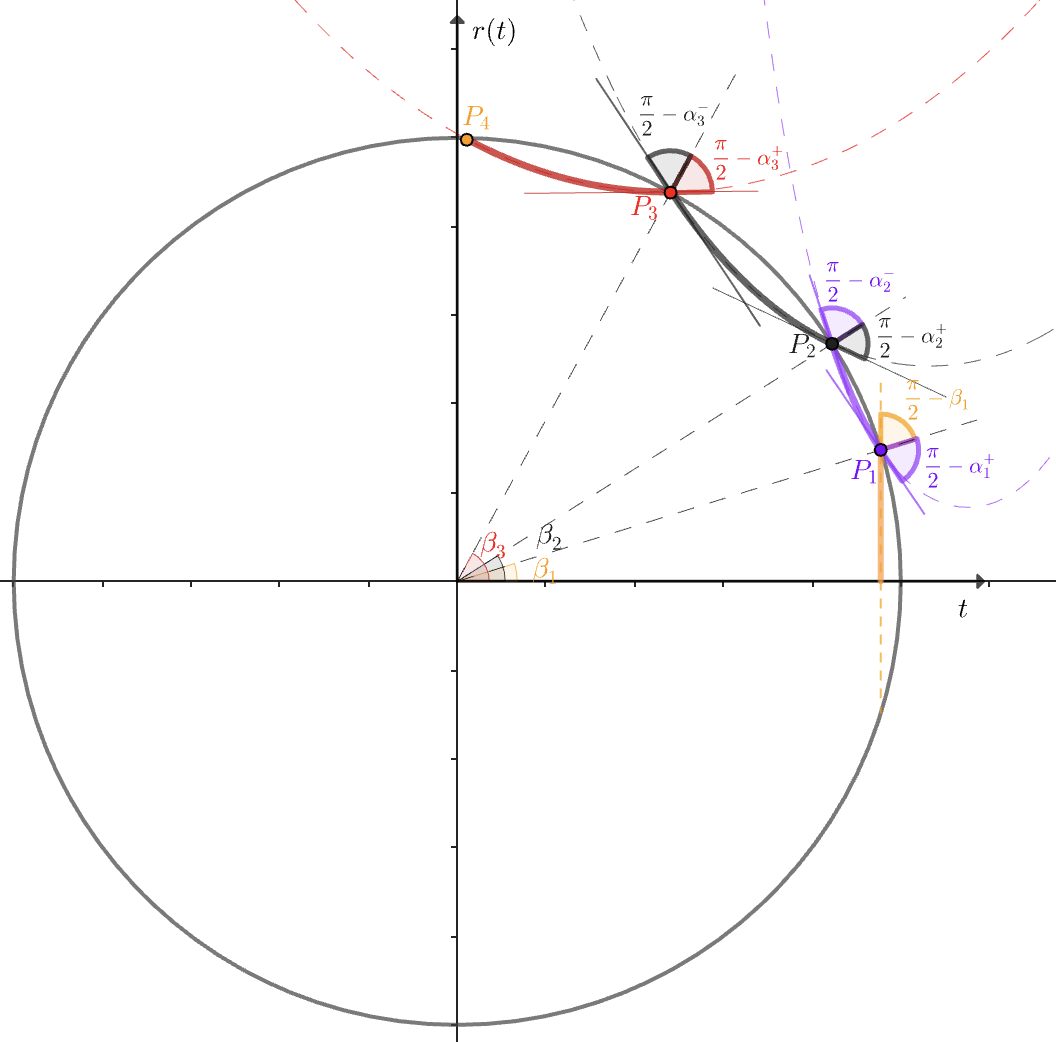}
  \caption{A collection of catenaries satisfying the angle conditions.}
  \label{fig:cat_conf_k=3}
\end{figure}

\begin{definition}\label{def:total-balanced}
Let $N\in\mathbb{N}$ and let
\[
0<\beta_1<\beta_2<\cdots<\beta_N<\pi
\]
be prescribed latitudes on $\mathbb{S}^1$. A \emph{balanced configuration} of order $N$ is a union
\[
\mathcal{C} \;=\;\bigcup_{i=0}^{N}\mathcal{C}_i
\]
with the following properties:
\begin{enumerate}
  \item $\mathcal{C}\cap\mathbb{S}^1=\{P_1,\dots,P_N\}$, where each $P_i$ has latitude $\beta_i$.
  \item $\mathcal{C}_0$ and $\mathcal{C}_N$ are the vertical segments joining $P_1$ and $P_N$ to the horizontal axis.
  \item For $1\le i\le N-1$, 
    \[
      \mathcal{C}_i \;=\;\mathbb{K}_{a_i,b_i}\,\cap\,\mathbb{B}^2
      \;=\;\mathbb{K}(\beta_i,\alpha_i^+)\,\cap\,\mathbb{B}^2,
    \]
    where $\alpha_i^+:=\alpha^+(a_i,b_i)\in(0,\pi-\beta_i)$ is such that $\beta^-(a_i,b_i)=\beta_{i+1}$
  \item At each interface $P_{i}$, the incoming and outgoing angles coincide
\[
\alpha_{i}^- \;=\;\alpha_{i}^+,
\quad i=1,\dots,N,
\] where  \[
\alpha_1^-:=\beta_1,
\qquad
\alpha_{i}^-:=\alpha^-(a_{i-1},b_{i-1}),
\qquad
\alpha_N^+:=\pi-\beta_N.
\]
\end{enumerate}
\end{definition}
An example of such a configuration can be found in Figures~\ref{Catenarydal} and~\ref{fig:sym_cat_conf_odd}.

\begin{figure}[h]
    \centering
    \includegraphics[width=12cm]{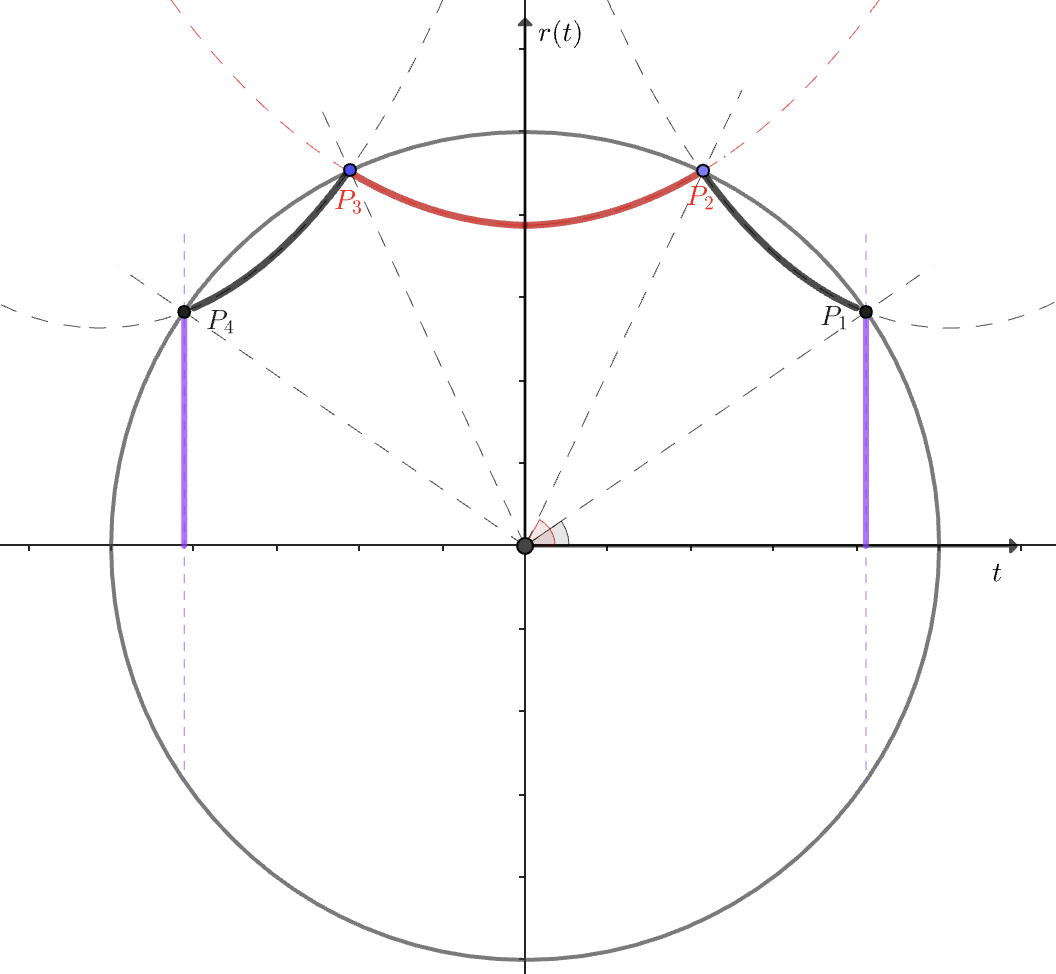}
    \caption{Symmetric balanced catenoidal configuration, \( k=2 \), \( N=2 \cdot 2 \)}
    \label{fig:sym_cat_conf_odd}
\end{figure}

The definition suggests the following procedure for constructing the balanced configurations.
Given an initial angle $\beta\in(0,\frac{\pi}{2})$, we construct sequences 
\[
  \bigl\{\beta_i(\beta)\bigr\}_{i\ge1},
  \quad
  \bigl\{\alpha_i(\beta)\bigr\}_{i\ge1}
\]
inductively as follows.
\begin{enumerate}
  \item For $i=1$, set
    \[
      \beta_1(\beta)=\beta,
      \quad
      \alpha_1(\beta)=\alpha_1^+=\beta.
    \]
    These determine a catenary $\mathbb{K}(\beta_1,\alpha_1)$ since $0<\beta<\pi-\beta$.  We then define
    \[
      \beta_2(\beta)=\beta^-(a_1,b_1),
      \quad
      \alpha_2(\beta)=\alpha^-(a_1,b_1)=\alpha_2^-.
    \]
  \item For $i\geq 2$, we construct inductively
    \begin{enumerate}
      \item If $\alpha_i(\beta)+\beta_i(\beta)>\pi$, by Lemma~\ref{lem:KZ} the catenary $\mathbb{K}(\beta_i,\alpha_i)$ does not exist and so the process terminates at this step and we obtained sequences
      \[
        \bigl\{\beta_j(\beta)\bigr\}_{j=1}^{i-1},
  \quad
  \bigl\{\alpha_j(\beta)\bigr\}_{j=1}^{i-1}
       \] 
      \item If $\alpha_i(\beta)+\beta_i(\beta)=\pi$, the catenary $\mathbb{K}(\beta_i,\alpha_i)$ does not exist, but addition of a vertical line completes the balanced configuration of order $i$.
      \item If $\alpha_i(\beta)+\beta_i(\beta)<\pi$, there exists a catenary $\mathbb{K}(\beta_i,\alpha_i)= \mathbb{K}_{a_i,b_i}$ and we set
        \[
          \beta_{i+1}(\beta)=\beta^-(a_i,b_i),
          \quad
          \alpha_{i+1}(\beta)=\alpha^-(a_i,b_i)=\alpha_{i+1}^-.
        \]
    \end{enumerate}
\end{enumerate}
This process yield a (possibly infinite) sequence of order $N(\beta)$ such that $\alpha_i(\beta)+\beta_i(\beta)\leq \pi$ with equality only if $i=N(\beta)$.
This (and much more general) type of sequences were studied in~\cite{KZ} and we summarize their results below.
\begin{proposition}
One always has $\alpha_i(\beta)<\tfrac\pi2$ for all $i$. The functions $\alpha_i(\beta)$, $\beta_i(\beta)$ are strictly increasing in $\beta$, whereas $N(\beta)$ is non-decreasing. Furthermore, $N(\beta)\to\infty$ as $\beta\to 0$, so that $\alpha_i(\beta)$, $\beta_i(\beta)$ are defined for sufficiently small $\beta$ and  $\alpha_i(\beta)$, $\beta_i(\beta)$ go to $0$ as $\beta\to 0$.
\end{proposition}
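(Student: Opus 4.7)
The plan is to reduce all four claims to a careful analysis of the transition map
\[
T\colon (\beta,\alpha)\longmapsto \bigl(\beta^-(a,b),\,\alpha^-(a,b)\bigr),
\]
defined on the open triangle $\mathcal T=\{(\beta,\alpha):\beta\in(0,\pi),\,\alpha\in(0,\pi-\beta)\}$, where $(a,b)$ are the parameters of $\mathbb{K}(\beta,\alpha)$ provided by Lemma~\ref{lem:KZ}. In these coordinates the recursion of the proposition reads $(\beta_{i+1},\alpha_{i+1})=T(\beta_i,\alpha_i)$, initialized at the diagonal point $(\beta_1,\alpha_1)=(\beta,\beta)$, and $N(\beta)$ is the last index at which the iterate remains in $\mathcal T$.

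First I would prove the pointwise bound $\alpha^-(\beta,\alpha)<\pi/2$ on all of $\mathcal T$. Since the catenary $r=a\cosh((t-b)/a)$ is strictly convex and $U$-shaped, the inward tangent $(1,r'_{a,b}(t_2))$ at the left intersection point $P_2$ has strictly positive first component, and comparing it with the circular tangent $-\overrightarrow{OP_2}^\perp$ (which, with the orientation convention of Lemma~\ref{lem:KZ}, also has strictly positive first component) shows that the enclosed angle is strictly acute. Combined with $\alpha_1=\beta<\pi/2$, an induction on $i$ then yields $\alpha_i(\beta)<\pi/2$ throughout.

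The heart of the argument is the strict monotonicity of $T$: each of the four partial derivatives $\partial_\beta\beta^-,\partial_\alpha\beta^-,\partial_\beta\alpha^-,\partial_\alpha\alpha^-$ is strictly positive on $\mathcal T$. Since by Lemma~\ref{lem:KZ} the map $(a,b)\mapsto(\beta,\alpha)$ is a diffeomorphism, one may implicitly differentiate the four defining equations
\[
t_i^2+r_{a,b}(t_i)^2=1,\qquad \cos\alpha_i^\pm \;=\; -\,\frac{t_i+r_{a,b}(t_i)\,r'_{a,b}(t_i)}{\sqrt{1+r'_{a,b}(t_i)^2}\,},\quad i=1,2,
\]
with respect to $(a,b)$ and then read off $DT$ by the chain rule. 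The sign analysis of the resulting $2\times 2$ Jacobians is the main technical obstacle; the underlying geometric picture — pushing $P_1$ counterclockwise or opening the entry angle deforms the catenary so as to push $P_2$ counterclockwise and open the exit angle — is transparent, but converting it into precise sign inequalities requires careful bookkeeping of the convexity of the catenary together with the orientation conventions at $P_1$ and $P_2$. Once the monotonicity of $T$ is in hand, strict monotonicity of $\beta\mapsto(\beta_i(\beta),\alpha_i(\beta))$ for every fixed $i$ follows by induction on $i$, and the monotonicity of $N(\beta)$ is immediate: the termination threshold $\alpha_i+\beta_i=\pi$ is attained at a smaller index for larger initial data.

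It remains to handle the limits as $\beta\to 0^+$. For fixed $i$ the monotone positive functions $\beta_i(\beta),\alpha_i(\beta)$ admit limits $\bar\beta_i,\bar\alpha_i\geq 0$, and continuity of $T$ at the corner $(0,0)$ of $\mathcal T$ — in the sense that $T(\beta,\alpha)\to(0,0)$ as $(\beta,\alpha)\to(0,0)$, which I would verify by a direct asymptotic analysis of $\mathbb{K}(\beta,\alpha)$ in this regime — forces $\bar\beta_i=\bar\alpha_i=0$ for every $i$ by induction. A quantitative refinement of this continuity, an estimate of the form $\beta_{i+1}-\beta_i\leq C(\beta_i+\alpha_i)$ on a neighbourhood of $(0,0)$, then shows that the latitude increments shrink uniformly as $\beta\to 0$, so that exhausting the finite budget $(0,\pi)$ requires an unbounded number of iterations, which is exactly the assertion $N(\beta)\to\infty$.
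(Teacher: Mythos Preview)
Your reduction to the transition map $T$ is natural, but your argument for the bound $\alpha_i<\tfrac\pi2$ breaks down: the pointwise claim that $\alpha^-(\beta,\alpha)<\tfrac\pi2$ on all of $\mathcal T$ is false. With the orientation conventions of the paper one has $-\overrightarrow{OP_2}^\perp=(-\sin\beta^-,\cos\beta^-)$, whose first component $-\sin\beta^-$ is negative throughout $(0,\pi)$, not positive as you assert; and even if both vectors had positive first components that alone would not force the angle between them to be acute. For an explicit counterexample take the symmetric catenary $b=0$, $a=1-\varepsilon$: one computes $\beta^+\approx\tfrac\pi2-\sqrt\varepsilon$ and $\alpha^+\approx 2\sqrt\varepsilon$, so $(\beta^+,\alpha^+)\in\mathcal T$, while by the reflection symmetry $\alpha^-=\pi-\alpha^+\approx\pi$.

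This is not a cosmetic slip, because the monotonicity statement you want to invoke (Proposition~2.10 of \cite{KZ}) is proved under the hypothesis $\alpha<\tfrac\pi2$; you cannot apply it on all of $\mathcal T$, so your induction never gets off the ground. The paper's proof therefore does \emph{not} attempt a pointwise bound on $\mathcal T$. Instead it argues case by case on the iterates themselves, using the diagonal initialization: if $\beta_i>\tfrac\pi2$ then $\alpha_i<\pi-\beta_i<\tfrac\pi2$ is immediate from the stopping rule, while for $i>1$ with $\beta_i\le\tfrac\pi2$ one first shows (via \cite{KZ}) that this forces the initial $\beta<\tfrac{2\pi}{7}$, and then \cite[Lemma~2.19(c)]{KZ} gives $\alpha_i\le\beta<\tfrac\pi2$. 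Only once this is in hand are monotonicity and the limiting behaviour read off from \cite[Propositions~2.10 and~2.20]{KZ}. In short, the bound $\alpha_i<\tfrac\pi2$ genuinely depends on the particular seed $(\beta_1,\alpha_1)=(\beta,\beta)$, not merely on the structure of $T$.
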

\begin{proof}
Let us first prove that $\alpha_i(\beta)<\tfrac\pi2$ for all $i$. Indeed, if 
$\beta_i(\beta)>\tfrac\pi2$ then $\alpha_i(\beta)<\pi-\beta_i<\tfrac\pi2$; 
If $\beta_i(\beta)\leq\tfrac\pi2$ then there are two possibilities. 
If $i=1$, then $\alpha_i(\beta)=\beta_i(\beta)<\tfrac\pi2$. If $i>1$ and $\beta_i(\beta)\leq\tfrac\pi2$, then it is shown in~\cite{KZ} that 
$\beta<\tfrac{2\pi}7$, in which case \cite[Lemma~2.19(c)]{KZ} implies 
$\alpha_i(\beta)\le\beta<\tfrac\pi2$.

Once $\alpha_i(\beta)<\tfrac\pi2$ is established, monotonicity of all the functions follows from \cite[Proposition 2.10]{KZ}. Limiting properties follow from \cite[Proposition 2.20]{KZ}.
\end{proof}

\begin{corollary}
\label{cor:uniq_B}
    For each $N\geq 2$ there is at most one balanced configuration of order $N$.
\end{corollary}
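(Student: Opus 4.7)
The plan is to observe that, by construction, every balanced configuration of order $N$ is the output of the inductive procedure started at some $\beta\in(0,\pi/2)$, and that the closing condition "the configuration has order exactly $N$" is encoded by the single equation $\alpha_N(\beta)+\beta_N(\beta)=\pi$. Uniqueness then reduces immediately to the strict monotonicity of $\alpha_N$ and $\beta_N$ established in the preceding Proposition.

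First I would show that any balanced configuration of order $N$ is determined by its top latitude $\beta_1$ and that necessarily $\beta_1\in(0,\pi/2)$. Indeed, by Definition~\ref{def:total-balanced}(iv), one has $\alpha_1^+=\alpha_1^-=\beta_1$, so that the pair $(\beta_1,\alpha_1^+)=(\beta_1,\beta_1)$ must satisfy the admissibility condition $\alpha_1^+<\pi-\beta_1$ of Lemma~\ref{lem:KZ}, forcing $\beta_1<\pi/2$. Setting $\beta:=\beta_1$, Lemma~\ref{lem:KZ} then specifies $\mathcal{C}_1=\mathbb{K}(\beta_1,\alpha_1^+)\cap\mathbb{B}^2$ uniquely, and the matching rule $\alpha_{i+1}^+=\alpha_{i+1}^-=\alpha^-(a_i,b_i)$ with $\beta_{i+1}=\beta^-(a_i,b_i)$ propagates this determinism to every subsequent catenary. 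Hence $\beta_i=\beta_i(\beta)$ and $\alpha_i^+=\alpha_i(\beta)$ for $i=1,\dots,N-1$, exactly as in the inductive construction, and the remaining closing condition $\alpha_N^+=\pi-\beta_N$ of Definition~\ref{def:total-balanced}(iv) reads
\[
\alpha_N(\beta)+\beta_N(\beta)=\pi,
\]
which is precisely case (2)(b) of the construction terminating the process at step $N$.

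Second, I would conclude by monotonicity. Let $F_N(\beta):=\alpha_N(\beta)+\beta_N(\beta)$, defined on the open set $\{\beta\in(0,\pi/2):N(\beta)\geq N\}$. By the preceding Proposition, both $\alpha_N(\beta)$ and $\beta_N(\beta)$ are strictly increasing on this set, so $F_N$ is strictly increasing. Therefore the equation $F_N(\beta)=\pi$ admits at most one solution $\beta\in(0,\pi/2)$, which, by the first step, shows that there is at most one balanced configuration of order $N$.

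There is no real obstacle here: the monotonicity statement already contained in the Proposition does all the work, and the only point that requires care is the bookkeeping verifying that the data $\beta\mapsto(\beta_i(\beta),\alpha_i(\beta))_{i=1}^{N}$ produced by the construction really coincides with the data of a balanced configuration, i.e. that the top vertical segment $\mathcal{C}_0$ is correctly accounted for by the identity $\alpha_1=\beta_1$, and the bottom segment $\mathcal{C}_N$ by the identity $F_N(\beta)=\pi$.
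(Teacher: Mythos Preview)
Your proof is correct and follows essentially the same approach as the paper's: both arguments recognise that any balanced configuration of order $N$ arises from the inductive construction started at some $\beta\in(0,\pi/2)$, that the closing condition is $\alpha_N(\beta)+\beta_N(\beta)=\pi$, and that strict monotonicity of $\alpha_N$ and $\beta_N$ forces this equation to have at most one solution. Your write-up is somewhat more explicit about why $\beta_1<\pi/2$ and about the domain on which $F_N$ is defined, but the underlying argument is identical.
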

\begin{proof}
    Suppose there are two such configuration. By definition, all balanced configuration are generated by angles $\beta',\beta''\in \left(0,\frac\pi2\right)$. In particular, $N(\beta')= N(\beta'')$ and $\alpha_N(\beta') + \beta_N(\beta') = \alpha_N(\beta'') + \beta_N(\beta'') = \pi$. At the same time, assuming $\beta'<\beta''$, by monotonicity of $N$, the functions $\alpha_N(\beta),\beta_N(\beta)$ are defined on $[\beta',\beta'']$ and are strictly increasing there, which contradicts the fact that $\alpha_N(\beta) + \beta_N(\beta) = \pi$ has two distinct solutions.
\end{proof}

\subsection{Existence of balanced configurations}
The existence of a balanced configuration is proved in~\cite{KZ}, below we provide some details.

\begin{definition}
A {\em symmetric balanced configuration} is a balanced configuration \( \mathcal{C} \) such that for some $k$ either
\begin{itemize}
    \item \( \beta_{k+1} = \frac{\pi}{2}, \) or
    \item \( \beta_k + \beta_{k+1} = \pi. \)
\end{itemize}
In this case one has
\(\mathcal{C} =  \underline{R}\mathcal{C},\)
where \( \underline{R} \) denotes reflection with respect to the \( y \)-axis, is a . The total order of \( \mathcal{C} \) is then \( N = 2k + 1 \) in the first case, and \( N = 2k \) in the second case. See Figures~\ref{Catenarydal} and~\ref{fig:sym_cat_conf_odd}.
\end{definition}

In particular, it means that if $\beta$ is such that $\beta_k(\beta) = \pi/2$ or $\beta_k(\beta)+\beta_{k+1}(\beta) = \pi$, then $\beta$ corresponds to a symmetric balanced configuration of order $N = 2k+1$ or $2k$ respectively.

\begin{theorem}[{\cite[Corollary 2.24]{KZ}}]
\label{thm B}
For all \( N \geq 2 \), there exists a unique initial latitude \( \beta = \beta_1(N) \in \left( 0, \frac{\pi}{2} \right) \) such that the corresponding balanced configuration \( \mathcal{C}(\beta) \) is symmetric and has total order \( N \).
\end{theorem}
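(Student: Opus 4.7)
The plan is to reduce the theorem to an application of the intermediate value theorem, while handling uniqueness separately by Corollary~\ref{cor:uniq_B}. First I would observe that every symmetric balanced configuration is in particular a balanced configuration in the sense of Definition~\ref{def:total-balanced}; hence two symmetric balanced configurations of the same order $N$ are two balanced configurations of order $N$ and must coincide by Corollary~\ref{cor:uniq_B}. This immediately disposes of the uniqueness of $\beta_1(N)$.

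For existence, I would proceed separately in the cases $N = 2k+1$ and $N = 2k$, monitoring how the iteratively generated sequence $\{(\beta_i(\beta),\alpha_i(\beta))\}$ depends on the initial angle $\beta \in (0,\pi/2)$. In the odd case I would define
\[
 f(\beta) \;:=\; \beta_{k+1}(\beta) - \tfrac{\pi}{2}
\]
on the open set $U \subset (0,\pi/2)$ consisting of those $\beta$ for which the iteration reaches at least step $k+1$; in the even case I would define
\[
 g(\beta) \;:=\; \beta_k(\beta) + \beta_{k+1}(\beta) - \pi
\]
on the analogous set. Proposition~\ref{def:total-balanced} (the monotonicity statement preceding Corollary~\ref{cor:uniq_B}) supplies the two ingredients that make these functions tractable: $f$ and $g$ are continuous and strictly increasing in $\beta$ on $U$, and $\beta_i(\beta) \to 0$ as $\beta \to 0$, so $f$ and $g$ tend to $-\pi/2$ and $-\pi$ respectively along a right neighbourhood of $0$. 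Once it is shown that $f$ (resp.\ $g$) attains a non-negative value somewhere in $U$, the intermediate value theorem together with strict monotonicity produces a unique zero $\beta_1(N)$, and the corresponding configuration is symmetric by construction; verifying that this configuration has total order exactly $N$, and not some smaller value, is then routine from the vanishing of $f$ or $g$ together with Definition~\ref{def:total-balanced}.

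The main technical obstacle is to establish that $f$ and $g$ do attain non-negative values on $U$, i.e.\ that the iteration can be pushed up to the symmetric configuration without premature termination. The natural strategy is to examine the right boundary of $U$, where the iteration first terminates at some step $j \leq k+1$ via the condition $\alpha_j(\beta) + \beta_j(\beta) = \pi$. Because $\alpha_j(\beta) < \pi/2$ for all $j$ (this is the first assertion of the proposition preceding Corollary~\ref{cor:uniq_B}), such a termination forces $\beta_j(\beta) > \pi/2$; combined with the fact that $N(\beta) \to \infty$ as $\beta \to 0$, one concludes that as $\beta$ ranges over $U$ the index $k+1$ sweeps continuously from deep interior positions up to the terminal position, and somewhere in between $\beta_{k+1}(\beta)$ crosses $\pi/2$ (odd case) or $\beta_k(\beta)+\beta_{k+1}(\beta)$ crosses $\pi$ (even case). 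The careful book-keeping of this transition is precisely what is carried out in~\cite{KZ}, so for the detailed monotonicity estimates and the verification that the transition point is interior to $U$ one should refer there; the statement of Theorem~\ref{thm B} is then the combined content of the uniqueness observation above and the intermediate value conclusion just outlined.
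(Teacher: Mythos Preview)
Your uniqueness argument via Corollary~\ref{cor:uniq_B} is exactly what the paper does. For existence, you and the paper both ultimately appeal to the intermediate value theorem combined with the monotonicity of $\beta_i(\beta)$ and the limit $\beta_i(\beta)\to 0$ as $\beta\to 0$; the difference lies in how one secures the \emph{upper} endpoint, i.e.\ a $\beta$ at which $f$ (resp.\ $g$) is non-negative.

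The paper does this by \emph{induction on $N$}. The base case $N=2$ is the critical drum from Appendix~\ref{app}. For the inductive step, suppose $\beta^*$ yields the symmetric configuration of odd order $2k-1$, so $\beta_k(\beta^*)=\pi/2$; since $\beta_{k+1}>\beta_k$ one immediately has $\beta_k(\beta^*)+\beta_{k+1}(\beta^*)>\pi$, i.e.\ $g(\beta^*)>0$. Lowering $\beta$ continuously drives $g$ below zero (because $\beta_{k+1}(\beta)\to 0$), so IVT produces the order-$2k$ configuration. The passage $2k\to 2k+1$ is analogous. Thus the $\beta$ from order $N-1$ furnishes the explicit upper endpoint for order $N$, and no analysis of the ``right boundary of $U$'' is needed.

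Your direct approach is not wrong in principle, but the paragraph on the boundary of $U$ is where it becomes vague: if the iteration terminates at step $j<k+1$ for the boundary $\beta$, then $\beta_{k+1}(\beta)$ is not even defined there, so one cannot simply evaluate $f$ or $g$ at that point. Making this precise requires exactly the sort of careful tracking that the inductive scheme sidesteps. In short, the induction is the organizing idea you are missing; it converts the existence of the previous order into the positivity statement you need, replacing your boundary analysis with a one-line observation.
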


\begin{proof}[Sketch of Proof]
This result is fully proven in \cite{KZ}. We provide a sketch to illustrate the main ideas. The uniqueness is covered in Corollary~\ref{cor:uniq_B}. For the existence, the authors proceed by induction.

For \( N = 2 \), we constructed a symmetric balanced configuration corresponding to the critical drum in Appendix~\ref{app}.

Now suppose the result holds for some odd \( N = 2k-1 \), i.e., \( \beta_k(\beta) = \frac{\pi}{2} \). Then we have \( \beta_k(\beta) + \beta_{k+1}(\beta) > \pi \). By gradually lowering \( \beta \), we also decrease \( \beta_k(\beta) + \beta_{k+1}(\beta)\). Since $\beta_{k+1}(\beta)\to 0$, for sufficiently small $\beta$ one has $\beta_{k+1}(\beta)<\pi/2$. Therefore, for some intermediate $\beta$ one has \( \beta_k(\beta) + \beta_{k+1}(\beta)  =\pi \), yielding a symmetric configuration of order \( N = 2k \).

A similar argument applies for the case \( N = 2k + 1 \), completing the proof.
\end{proof}

\end{document}